\documentclass[11pt,reqno]{amsart}

\usepackage[margin=1.08in]{geometry}
\usepackage{amsmath,amssymb,amsthm,mathtools,mathrsfs}
\usepackage{microtype}
\usepackage{booktabs}
\usepackage{enumitem}
\usepackage{array}
\usepackage{longtable}
\usepackage{xcolor}
\usepackage[hidelinks]{hyperref}

\usepackage[nameinlink,capitalise,noabbrev]{cleveref}

\allowdisplaybreaks
\setlist[itemize]{leftmargin=2em}
\setlist[enumerate]{leftmargin=2em}

\newtheorem{theorem}{Theorem}[section]
\newtheorem{proposition}[theorem]{Proposition}
\newtheorem{lemma}[theorem]{Lemma}
\newtheorem{corollary}[theorem]{Corollary}

\theoremstyle{definition}
\newtheorem{definition}[theorem]{Definition}

\theoremstyle{remark}
\newtheorem{remark}[theorem]{Remark}

\newcommand{\R}{\mathbb R}

\newcommand{\E}{\mathbb E}
\newcommand{\Pp}{\mathbb P}

\newcommand{\tr}{\operatorname{tr}}

\newcommand{\supp}{\operatorname{supp}}
\newcommand{\seca}{\operatorname{Sec}}

\newcommand{\norm}[1]{\left\lVert #1\right\rVert}

\newcommand{\eps}{\varepsilon}

\newcommand*{\prob}[1]{\mathbb{P}\left\{ #1 \right\}}
\newcommand*{\probb}[2]{\mathbb{P}_{#1}\left\{ #2 \right\}}

\title{Randomly Permuted Orthogonal Products and Fast Dimension Reduction}
\author{Rafael Chiclana}
\date{}

\begin{document}

\begin{abstract}
	We study the effect of random signed permutations on products of
	orthogonal matrices and their applications to fast dimension reduction. Let $A,B \in \mathbb{R}^{d\times d}$ be orthogonal matrices and let $\Sigma \in \mathbb{R}^{d\times d}$ be a uniformly random signed permutation matrix. We analyze the random orthogonal matrix
	\[
	U=A \Sigma B,
	\]
	and show that, under mild assumptions on the size of the entries of $A$ and $B$,
	\[
	\max_{i,j=1,\ldots,d} |U_{ij}|
	=O\left (
	\sqrt{\frac{\log d}{d}}\right )
	\]
	with high probability. As an application, we show that ORA, an analogue of the Kac walk in which every update is a \(\pi/4\) rotation, reaches
	the same maximal entry scale after \(O(d\log d)\) updates. This resolves a
	question of Jain et al. and improves the running time of their construction. We
	also show that parallel ORA reaches this scale after \(O(\log d)\) rounds. We then study the random embedding
	\[
	\Phi
	=
	\sqrt{\frac{d}{m}}\,
	P_I U D_{\xi'},
	\]
	where \(P_I\) restricts to \(m\) coordinates and \(\xi'\) is an
	independent Rademacher vector. We identify two parameters controlling
	norm preservation and show that, throughout the corresponding
	admissible range, \(\Phi\) achieves optimal
	embedding dimension $m\asymp\varepsilon^{-2}\log(N)$. Finally, we extend the result to structured infinite models, including sparse vectors, low-rank matrices, and finite unions of subspaces.
\end{abstract}

\maketitle

\section{Introduction}

The Johnson--Lindenstrauss lemma \cite{JohnsonLindenstrauss1984}
provides dimension reduction with optimal target dimension
\[
m\asymp\eps^{-2}\log N,
\]
but applying an \(m\times d\) dense sub-Gaussian matrix requires
\(O(md)\) arithmetic operations. Fast Johnson--Lindenstrauss
constructions seek to retain the optimal target dimension while reducing
the cost of applying the embedding. A common approach is to precondition
the input vectors using a structured orthogonal transformation that
spreads their mass more evenly across the coordinates, and then reduce
the dimension by coordinate sampling or a sparse projection
\cite{AilonChazelle2009,AilonLiberty2009,KrahmerWard2011}.

More recently, Jain, Pillai, Sah, Sawhney, and Smith developed fast and
memory-efficient Johnson--Lindenstrauss embeddings based on the Kac walk
\cite{JainEtAl2022}. The Kac walk is a Markov chain on
\(\mathbb S^{d-1}\) in which, at each step, two coordinates are chosen
uniformly and rotated by an independent uniform angle. The walk rapidly
spreads mass among the coordinates: Pillai and Smith proved that its
total-variation mixing time is of order \(d\log d\)
\cite{PillaiSmith2017}, and these results were significantly
strengthened by Jain and Mizgerd, who recently established cutoff
\cite{JainMizgerd2026}. For a matrix $A$, we refer to $\|A\|_\infty=\max_{i,j}|A_{ij}|$ as its \emph{coherence}. The construction of \cite{JainEtAl2022} runs the Kac walk until the resulting orthogonal matrix \(Q_T\) satisfies 
\begin{equation}\label{eq:near-optimal-coherence} \norm{Q_T}_\infty \lesssim \sqrt{\frac{\log d}{d}},
\end{equation}
which occurs after \(T=O(d\log d)\) updates. This bound allows randomly subsampled rows of $Q_T$ to satisfy the restricted isometry property. Throughout the range \(\log N = O(\eps\sqrt d)\), up to logarithmic factors, their construction achieves optimal embedding dimension with \(O(d\log d)\) running time and \(O(1)\) additional working memory.

The same work also considered orthogonal repeated averaging (ORA), an analogue of the Kac walk in which every selected pair of coordinates is rotated by the angle \(\pi/4\). In this case, their
argument obtains near optimal coherence \eqref{eq:near-optimal-coherence} only after
\[
O(d\log d\log\log d)
\]
updates. Thus, the ORA construction incurs an additional
\(\log\log d\) factor in its first stage. The removal of this factor is identified as the most immediate problem left open in \cite[Section~5]{JainEtAl2022}.

We resolve this problem by showing that ORA reaches the required
coherence scale after \(O(d\log d)\) updates.

\begin{theorem}\label{thm:one-block-ora}
	There exists a universal constant \(C>0\) such that, for every
	$d$ sufficiently large, an ORA walk \(Q_T\) of length $T\geq C d\log d$ satisfies, with probability at least $1-d^{-2}$,
	\begin{equation}\label{eq:6.1}
		\norm{Q_T}_\infty
		\leq
		8\sqrt{\frac{\log d}{d}}.
	\end{equation}
\end{theorem}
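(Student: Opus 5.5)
The plan is to reduce the statement to the paper's main structural result on randomly permuted orthogonal products. That result says that for fixed orthogonal $A,B$ with small entries and a uniform random signed permutation $\Sigma$, the product $U=A\Sigma B$ satisfies $\norm{U}_\infty=O(\sqrt{\log d/d})$ with high probability. The key observation is a symmetry of ORA. Each update is a $\pi/4$ rotation on a uniformly random pair of coordinates. Conjugating such a rotation by a uniform random signed permutation gives the same law, since a signed permutation maps a pair rotation $R_{ij}(\pi/4)$ to $R_{\sigma(i)\sigma(j)}(\pm\pi/4)$ or its transpose. For the $\pi/4$ rotation these are Hadamard-type $2\times2$ blocks that agree with the original block up to row and column signs. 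We can therefore insert an independent uniform signed permutation $\Sigma$ in the middle of the walk without changing the distribution of $Q_T$.

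\textbf{Step 1.} Split $T=T_1+T_2$ with $T_1,T_2\ge C' d\log d$, and write $Q_T \overset{d}{=} A\,\Sigma\,B$. Here $B=Q^{(1)}$ is the first $T_1$ steps, $A=Q^{(2)}$ is the last $T_2$ steps, and $\Sigma$ is independent of both. This requires checking that the law of the ORA step distribution is invariant under $R\mapsto \Sigma R\Sigma^{-1}$ up to absorbable sign changes. Concretely, we show that $\Sigma Q^{(2)}$ and $Q^{(2)}\Sigma'$ have the same joint law with the remaining randomness, by commuting $\Sigma$ through the steps. Signs on the coordinates of a $\pi/4$ rotation produce either the same rotation or its inverse composed with diagonal signs, and those signs can be pushed further through.

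\textbf{Step 2.} Verify that $A$ and $B$ satisfy the ``mild assumptions'' of the structural theorem with high probability after $C'd\log d$ steps. These are presumably bounds such as $\norm{A}_\infty\le d^{-c}$ or a weak row-delocalization bound like $\max_i\|A_{i\cdot}\|_4^4\lesssim d^{-c}$. For a single row, the squared entries evolve under repeated averaging: when a selected pair involves coordinates $i$ and $j$, the masses $(x_i,x_j)$ are replaced by $((x_i+x_j)/\sqrt2,(x_i-x_j)/\sqrt2)$. We track the potential $\sum_k x_k^4$. In expectation one step multiplies its excess over $1/d$ by $1-c/d$, because $x_i^4+x_j^4$ becomes $(x_i^4+x_j^4)/2+3x_i^2x_j^2$. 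After $C'd\log d$ steps this yields $\E\sum_k x_k^4\le 2/d+d^{-10}$. Markov's inequality and a union bound over the $d$ rows and columns then give a polynomial-in-$d$ delocalization bound, which is all the weak hypothesis needs. The Kac-walk-style analysis in \cite{JainEtAl2022} would only be invoked for this crude bound, not for the sharp $\sqrt{\log d/d}$ bound. This avoids the $\log\log d$ loss.

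\textbf{Step 3.} Conditioned on $A,B$ satisfying Step 2, apply the structural theorem to $A\Sigma B$ to get $\norm{Q_T}_\infty\le 8\sqrt{\log d/d}$ with probability $1-d^{-3}$. Adding the failure probability of Step 2 gives a total below $d^{-2}$ for large $d$. The main obstacle is Step 1. The exact invariance of ORA under signed-permutation conjugation must be verified carefully, including the orientation and sign conventions of the $\pi/4$ rotation. If exact invariance fails, the fallback is to show that $\Sigma$ can be realized by an additional ORA segment followed by random signs. A second concern is matching the constant $8$, which depends on the constants in the structural theorem.
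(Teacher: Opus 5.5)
Your overall architecture matches the paper's: split the walk into two blocks of length $\gtrsim d\log d$, get a crude entrywise bound for each block, amplify it with \Cref{cor:coherence-amplification} applied to $Q^{(2)}D_\xi P_\pi Q^{(1)}$, and use signed-permutation symmetry to identify $\norm{Q^{(2)}\Sigma Q^{(1)}}_\infty$ in law with $\norm{Q_T}_\infty$. Step 1 is sound, and a little more direct than the paper's detour through S-ORA. Since the ordered pair is uniform and $R_{ij,-\pi/4}=R_{ji,\pi/4}$, conjugation by any fixed $G\in\mathcal H_d$ permutes the ORA steps. Hence $Q^{(2)}\Sigma Q^{(1)}=Q^{(2)}(\Sigma Q^{(1)}\Sigma^{-1})\Sigma$ has the law of $Q_T\Sigma$. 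This is not the law of $Q_T$ itself, but it suffices because $\norm{\cdot}_\infty$ is invariant under signed permutations. The constant $8$ also works out: with $\eta_0=d^{-2}/2$, the corollary gives $4\sqrt{\log(2d^3)/d}\le 8\sqrt{\log d/d}$.

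The genuine gap is Step 2. You must verify $\norm{A}_\infty\norm{B}_\infty\le(d\log(d/\eta_0))^{-1/2}$, which in practice means bounds like $\norm{A}_\infty,\norm{B}_\infty\le d^{-1/4-c}$. These must hold with failure probability $o(d^{-2})$ after a union bound over all columns. An $\ell^4$ row condition would not substitute, since the range $|\lambda|\le(\norm{A^{\mathsf T}y}_\infty\norm{Bx}_\infty)^{-1}$ in \Cref{thm:bilinear-overlap} involves sup norms.

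A fourth-moment potential cannot deliver this. Its limit is $3/(d+2)$, not $2/d$: the one-step drift of $S=\sum_k x_k^4$ is $(3-(d+2)S)/(d(d-1))$. But even granting $\E\sum_k x_k^4\lesssim 1/d$, Markov yields only
\[
\Pp\bigl\{\max_k x_k^2>d^{-1/2-2c}\bigr\}\le\Pp\Bigl\{\sum_k x_k^4>d^{-1-4c}\Bigr\}\lesssim d^{4c},
\]
which is vacuous already for a single column, before any union bound.

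What you need is a moment bound of fixed but large order:
\[
\E\sum_i|(Q_Te_j)_i|^{2p}\le C_pd^{1-p}\qquad\text{for } T\ge C_pd\log d.
\]
Markov then gives $\Pp\{\norm{Q_Te_j}_\infty^2>d^{-1+\rho}\}\le C_pd^{1-p\rho}$. Taking $\rho=1/4$ and $p>24$ yields $\norm{Q_T}_\infty\le d^{-3/8}$ outside probability $d^{-4}$. This is exactly \cite[Proposition~4.3]{JainEtAl2022}, which the paper uses in \Cref{prop:ora-weak}.

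If you want to prove it yourself, note that for $p\ge3$ the one-step recursion for $\sum_i x_i^{2p}$ does not close. The cross terms $x_i^{2r}x_j^{2p-2r}$ must be absorbed via the interpolation $\sum_i|x_i|^{2r}\le(\sum_i|x_i|^{2p})^{(r-1)/(p-1)}$ together with a two-regime contraction argument, as in the paper's appendix on parallel ORA. Without this higher-moment input, Step 3 has no hypothesis to apply to.
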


The first stage of the ORA construction of \cite{JainEtAl2022} can
therefore be shortened from \(O(d\log d\log\log d)\) to
\(O(d\log d)\) updates. The remaining stages of their construction are
unchanged, yielding the following improvement of their ORA
Johnson--Lindenstrauss embedding.

\begin{theorem}\label{thm:jps-improvement}
	There exists a universal constant \(C>0\) such that the following
	holds. Let \(N\geq d\ge C\), \(0<\eps<C^{-1}\), and let
	\(X\subseteq \R^d\) with \(|X|=N\). Then there
	exists an ORA-based random linear map
	\(\Phi:\R^d\to\R^m\), with \(m\le C\eps^{-2}\log N\), such that
	\[
	(1-\eps)\norm{x}_2
	\le
	\norm{\Phi x}_2
	\le
	(1+\eps)\norm{x}_2
	\qquad
	\text{for every }x\in X,
	\]
	with probability at least \(2/3\). Moreover, \(\Phi x\) can be computed in time
	\[
	O\left(
	d\log d+
	\min\left\{
	d\log d\log N,\,
	\eps^{-2}(\log N)^2(\log\log N)^2(\log d)^4
	\right\}
	\right)
	\]
	using \(O(1)\) additional working memory.
\end{theorem}

The map $\Phi$ is obtained from \cite[Algorithm~2]{JainEtAl2022} by replacing its initial ORA walk of length \(O(d\log d\log\log d)\) by one of length \(O(d\log d)\).

A similar phenomenon occurs for the parallel Kac walk introduced by
Lu, Qin, Song, Yao, and Zhao \cite{LuQinSongYaoZhao2024}. In each
round, the coordinates are partitioned into a uniform perfect matching
and independent random rotations are applied simultaneously to the
matched pairs. The authors asked whether these independent rotations
can be replaced by a common random rotation, or even by a fixed angle
\cite{LuQinSongYaoZhao2024,LuQinSongYaoZhao2025}. We consider here
a version, denoted by \emph{parallel ORA}, for which every matched
pair is rotated by the angle \(\pi/4\). We show that a parallel ORA walk $Q_T$ satisfies \eqref{eq:near-optimal-coherence} after $T=O(\log d)$ parallel rounds.

\begin{theorem}
	\label{thm:parallel-ora-coherence}
	There exists a universal constant \(C>0\) such that, for every sufficiently large even $d$, a parallel ORA walk \(Q^{\mathrm{par}}_T\) with $T\geq C \log d$ rounds satisfies, with probability at least $1-d^{-2}$,
	\[
	\norm{Q_T^{\mathrm{par}}}_\infty
	\leq
	C\sqrt{\frac{\log d}{d}}.
	\]
\end{theorem}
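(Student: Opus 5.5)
The proof will reduce parallel ORA to the main structural result of the paper, as announced in the abstract. For orthogonal $A,B$ with entries of size $O(1/\sqrt d)$, or more generally with suitably small $\ell_4$-type row and column norms, and a uniformly random signed permutation $\Sigma$, the product $U=A\Sigma B$ satisfies $\norm{U}_\infty=O(\sqrt{\log d/d})$ with high probability. The key observation is that a parallel ORA round has the form $R\,\Pi$. Here $\Pi$ is a uniformly random permutation matrix that sends the random perfect matching to the fixed matching $\{(1,2),(3,4),\dots\}$, and $R=\bigoplus_{k} \frac{1}{\sqrt2}\begin{pmatrix}1&-1\\1&1\end{pmatrix}$ is the fixed block $\pi/4$ rotation, conjugated back by $\Pi^{-1}$. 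Signs are not available for free, but they can be introduced by a coupling. Conjugating $R$ by a diagonal sign matrix that is constant on each block does not change it. The remaining within-block sign flips and swaps can be absorbed using the fact that swapping or negating within a pair maps the $\pi/4$ rotation to itself up to left and right multiplication by signed permutations. Those signed permutations can in turn be absorbed into the uniform relabelling of the next matching. After this bookkeeping, $Q_T^{\mathrm{par}}$ has the same distribution, up to left and right multiplication by signed permutation matrices (which do not change $\norm{\cdot}_\infty$), as a product $R\Sigma_T R \Sigma_{T-1}\cdots R\Sigma_1$ with independent uniformly random signed permutations $\Sigma_t$.

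Next, I would split the $T\ge C\log d$ rounds into two halves and show that each half, on its own, is already well spread in a weak sense. Write $M_s=R\Sigma_s\cdots R\Sigma_1$. I would track the potential $\Phi(M)=\max_j \sum_i M_{ij}^4$, the maximal column $\ell_4^4$ mass, or alternatively the $\ell_\infty$ norm. One $R$ after a random signed permutation acts on each column $x$ by replacing each pair $(x_a,x_b)$ with $((x_a\pm x_b)/\sqrt2,(x_a\mp x_b)/\sqrt2)$, where the pairs are random and the signs are random. A direct computation gives $\E\sum_i y_i^4 = \frac12\sum_i x_i^4 + O(1/d)$ on average. Adding a concentration step then shows that after $O(\log d)$ rounds every column has $\ell_4^4$ mass $O(1/d)$ with probability $1-d^{-3}$; the same argument applies to rows by symmetry, since $M^{T}$ has the same form. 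This is the step where the fixed angle matters. The halving of fourth moments still holds for a fixed angle because the random signs make the cross term $x_a^2x_b^2$ enter with the right weight.

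Finally, I would write $Q^{\mathrm{par}}_T=A\Sigma B$, taking $B=M_{T/2}$ (the first half), $\Sigma$ as the middle signed permutation, and $A$ as the product of the rounds after it. The factors $A$ and $B$ are independent of $\Sigma$, and by the previous step both have small row and column $\ell_4$ mass with probability $1-O(d^{-3})$. The main theorem on $A\Sigma B$ applied conditionally then gives $\norm{Q_T^{\mathrm{par}}}_\infty\le C\sqrt{\log d/d}$, and a union bound gives total failure probability at most $d^{-2}$. I expect the main obstacle to be the second step: getting a high-probability decrease of the column $\ell_4^4$ mass uniformly over all $d$ columns within $O(\log d)$ rounds. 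The expectation computation is easy. Concentration per round, however, requires a martingale or bounded-difference argument under the random matching, using that the increment of $\sum_i y_i^4$ is a sum over pairs with bounded contributions. If the $\ell_4$ route fails, I would fall back on a crude $\ell_\infty$ bound such as $\norm{B}_\infty\le d^{-1/2+o(1)}$, provided the main theorem's hypotheses allow it.
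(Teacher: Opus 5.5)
Your Step~1 is fine. Rewriting the walk as $R\Sigma_T R\Sigma_{T-1}\cdots R\Sigma_1$, up to outer signed permutations, is a valid variant of the paper's device of pushing the middle signed permutation through the rounds of $Q_2$ using $R_{\pi/4}D=\Sigma R_{\pi/4}$. The overall architecture (independent halves, a middle uniform signed permutation, then amplification) is the paper's. The gap is Step~2, which carries all the difficulty and is not proved.

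\textbf{Your Step~2 target already implies the theorem.} You want $\sum_i M_{ij}^4=O(1/d)$ for every column and row with probability $1-d^{-3}$. Suppose this holds for the rows $a$ of $A$ and the columns $b$ of $B$. Conditionally on $\pi$, the variance of $U_{ij}=\sum_k a_k\xi_k b_{\pi(k)}$ is $\sum_k a_k^2b_{\pi(k)}^2\le\norm{a}_4^2\norm{b}_4^2=O(1/d)$ by Cauchy--Schwarz, and Hoeffding in $\xi$ alone finishes. The permutation would then be superfluous, so all the work sits in the unproved concentration step. Note also that \Cref{thm:bilinear-overlap} does not accept an $\ell_4$ hypothesis. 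Under $\ell_4$ control its admissible range $|\lambda|\le 1/(\norm{a}_\infty\norm{b}_\infty)$ is only guaranteed to be of order $\sqrt d$, which is too small by a factor $\sqrt{\log d}$.

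\textbf{The proposed concentration argument does not reach $d^{-3}$.} A per-round martingale or bounded-difference argument on $\sum_i x_i^4$ fails because, while a column is localized, large increments occur with probability only polynomially small. For example, after one round the column is $(\pm2^{-1/2},\pm2^{-1/2})$ on two coordinates. With probability $1/(d-1)$ the next matching pairs them, and the $\pi/4$ rotation recombines them into a single coordinate whatever the signs. So $\sum_ix_i^4$ jumps from $1/2$ back to $1$, and the fourth-moment expectation bound does not control such events.

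\textbf{The missing idea: no sharp high-probability bound is needed.} \Cref{cor:coherence-amplification} asks only for $\norm{A}_\infty\norm{B}_\infty\le (d\log(d/\eta))^{-1/2}$. This polynomially weak condition holds if, say, $\norm{A}_\infty,\norm{B}_\infty\le d^{-3/8}$. Such a bound with failure probability $d^{-4}$ follows from expectations alone via Markov, provided you control a \emph{high} moment. Concretely, $\E\sum_i|(Qe_j)_i|^{2p}\le C_pd^{1-p}$ for a fixed large $p$ gives $\Pp\{\norm{Q}_\infty>d^{-3/8}\}\le C_pd^{2-p/4}$, and the paper takes $p=28$.

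\textbf{Why your fallback needs general $p$.} Your fourth-moment computation ($p=2$) cannot produce this: Markov on $\E\sum_ix_i^4\le C/d$ says nothing about $\Pp\{\norm{x}_\infty>d^{-1/4}\}$. What you need is the recursion
$\E S_p(Rx)\le 2^{1-p}S_p(x)+\frac{2^{-p}}{d-1}\sum_{r=1}^{p-1}S_r(x)S_{p-r}(x)$, with $S_r(x)=\frac{1}{(2r)!}\sum_i x_i^{2r}$.
The cross terms are closed by the H\"older interpolation $\sum_i|x_i|^{2r}\le\bigl(\sum_i|x_i|^{2p}\bigr)^{(r-1)/(p-1)}$ for unit $x$. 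This yields $\E S_p(Rx)\le\rho_pS_p(x)+C_pd^{1-p}$ with $\rho_p<1$, and hence the moment bound after $C_p\log d$ rounds.

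Finally, the random signs play no role in this recursion: $(x_a+x_b)^{2p}+(x_a-x_b)^{2p}$ contains only even terms regardless of sign.
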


Thus, for the purpose of obtaining near optimal coherence, the
independent random rotations in a parallel Kac round can be replaced by
the fixed angle \(\pi/4\). We do not address the stronger mixing or
pseudorandomness properties considered in \cite{LuQinSongYaoZhao2024,LuQinSongYaoZhao2025}.

The preceding results for ordinary and parallel ORA are consequences of
a more general phenomenon. Let \(A,B\in O(d)\) be orthogonal matrices, let \(D_\xi\) be a Rademacher diagonal matrix, and let \(P_\pi\) be an independent uniform permutation
matrix. We study the random orthogonal matrix
\[
U=AD_\xi P_\pi B.
\]
Our analysis of the bilinear forms \(\langle y,Ux\rangle\) shows that
the random signed permutation prevents large entries of the two
orthogonal factors from aligning. As a consequence,
we obtain the following coherence amplification principle: Under a mild
assumption on the maximal entries of \(A\) and \(B\), the matrix \(U\) has
near optimal coherence with high probability.

\begin{corollary}\label{cor:coherence-amplification}
	Let \(A,B\in O(d)\), let \(D_\xi \in \mathbb{R}^{d\times d}\) be a Rademacher diagonal, and
	let \(\pi\) be an independent uniform permutation on \(S_d\). Let \(0<\eta<1/2\) and set $U=AD_\xi P_\pi B$. Suppose that
	\[
	\norm{A}_\infty\norm{B}_\infty
	\le
	\frac{1}{\sqrt{d \log(d/\eta)}}.
	\]
	Then, with probability at least \(1-\eta\),
	\[
	\norm{U}_\infty
	\le
	4\frac{\sqrt{\log(d/\eta)}}{\sqrt{d}}.
	\]
\end{corollary}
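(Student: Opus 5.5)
Fix indices $i,j$ and set $a=A^{\top}e_i$ (the $i$-th row of $A$) and $b=Be_j$ (the $j$-th column of $B$). Both are unit vectors, with $\|a\|_\infty\le\norm{A}_\infty$ and $\|b\|_\infty\le\norm{B}_\infty$. The entry is
\[
U_{ij}=\sum_{k=1}^d a_k\,\xi_k\,b_{\pi^{-1}(k)}=\sum_{k=1}^d \xi_k\,a_k b_{\pi^{-1}(k)} .
\]
The plan is to condition on $\pi$ and use the Rademacher signs. Conditionally on $\pi$, the entry $U_{ij}$ is a Rademacher sum with coefficients $c_k=a_kb_{\pi^{-1}(k)}$. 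Hoeffding's inequality then gives
\[
\Pp\left\{|U_{ij}|>t \,\middle|\, \pi\right\}\le 2\exp\left(-\frac{t^2}{2S_\pi}\right),
\qquad
S_\pi=\sum_k a_k^2 b_{\pi^{-1}(k)}^2 .
\]
Everything therefore reduces to showing that $S_\pi\lesssim 1/d$ with high probability over $\pi$. We would then take a union bound over the $d^2$ pairs $(i,j)$, with failure budget $\eta/(2d^2)$ for the sign step and $\eta/(2d^2)$ for the permutation step.

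Control of $S_\pi$ is the main step. Its mean is $\E S_\pi=\frac1d\sum_k a_k^2\sum_l b_l^2=1/d$, since $\pi^{-1}(k)$ is uniform. For concentration we view $S_\pi=\sum_k u_k v_{\pi(k)}$ with $u_k=a_k^2\in[0,\|a\|_\infty^2]$ and $v_l=b_l^2\in[0,\|b\|_\infty^2]$, both nonnegative and summing to $1$. This is a linear permutation statistic, and Hoeffding's theorem on sampling without replacement (or Chatterjee's exchangeable-pairs concentration for $\sum_k u_kv_{\pi(k)}$) gives a Bernstein-type tail. Each summand is at most $M:=\|a\|_\infty^2\|b\|_\infty^2\le \frac{1}{d\log(d/\eta)}$, and the variance proxy is at most $M\cdot\E S_\pi= M/d$. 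Hence
\[
\Pp\left\{S_\pi>\tfrac1d+s\right\}\le \exp\left(-c\,\frac{s^2}{M/d+Ms}\right).
\]
With $s=3/d$, the exponent is of order $\frac{1/d^2}{M/d}=\frac{1}{dM}\ge \log(d/\eta)$. So $S_\pi\le 4/d$ except with probability roughly $(\eta/d)^{c'}$. We expect the main obstacle to be getting the constants right: we need failure probability at most $\eta/(2d^2)$, which requires $c'\ge 3$ or so. We would try Chatterjee's bound $\Pp\{S-\E S\ge t\}\le\exp(-t^2/(4M\E S+2Mt))$, which has explicit constants. If these constants are too weak, the fallback is to allow $S_\pi\le C/d$ with a larger $C$ and absorb the loss into the final constant $4$. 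A sharper route is a direct moment-generating-function bound for $S_\pi$ via negative association of the permutation (Joag-Dev–Proschan), which reduces to the i.i.d. case.

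On the event $S_\pi\le 4/d$ we choose $t=4\sqrt{\log(d/\eta)/d}$. The conditional Hoeffding bound then gives
\[
2\exp\left(-\frac{16\log(d/\eta)}{8}\right)=2(\eta/d)^2\le \eta/(2d^2)
\]
up to the harmless factor coming from $\eta<1/2$ and $d\ge2$; if needed we tune the threshold on $S_\pi$ so this holds. A union bound over all $d^2$ entries and both failure events gives $\norm{U}_\infty\le 4\sqrt{\log(d/\eta)/d}$ with probability at least $1-\eta$. The hypothesis $\norm{A}_\infty\norm{B}_\infty\le (d\log(d/\eta))^{-1/2}$ enters exactly through $M$: it is precisely what makes $\exp(-c/(dM))\le (\eta/d)^{c}$. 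Without it, alignment of a few large entries of $a$ and $b$ under $\pi$ would dominate $S_\pi$.
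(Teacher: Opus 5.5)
Your mechanism matches the paper's. Conditionally on $\pi$, $U_{ij}$ is a Rademacher sum with variance $S_\pi=\sum_k a_k^2 b_{\pi(k)}^2$, whose mean is $1/d$. The hypothesis enters through $M=\norm{a}_\infty^2\norm{b}_\infty^2\le 1/(dL)$, where $L=\log(d/\eta)$. The gap is that your two-stage implementation does not prove the statement with its explicit constant $4$ and failure probability $\eta$. That implementation first puts a high-probability event on $S_\pi$ and then applies conditional Hoeffding, and the permutation step you rely on fails quantitatively.

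With the bound you cite, $\Pp\{S_\pi-\tfrac1d\ge s\}\le\exp\bigl(-s^2/(4M/d+2Ms)\bigr)$, the choice $s=3/d$ gives exponent $\tfrac{9}{10}\cdot\tfrac{1}{dM}$. In the worst case allowed by the hypothesis this is only $\tfrac{9}{10}L$. The resulting per-entry failure $(\eta/d)^{9/10}$ does not survive the union bound over $d^2$ entries, which needs an exponent of roughly $2L$. Getting $2L$ from this inequality pushes the threshold up to $(3+2\sqrt3)/d\approx 6.5/d$. Hoeffding then needs $t\ge 2\sqrt{6.5}\,\sqrt{L/d}\approx 5.1\sqrt{L/d}$.

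The sign step has no slack either. At threshold exactly $4/d$ its total failure is already $2\eta^2$; your inequality $2(\eta/d)^2\le \eta/(2d^2)$ holds only for $\eta\le 1/4$. That leaves $\eta(1-2\eta)$ for the permutation step, which tends to $0$ as $\eta\to1/2$, so the threshold must lie strictly below $4/d$. Absorbing the loss into the final constant proves a weaker statement.

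The missing idea is to drop the separate tail event for $S_\pi$ and compose moment generating functions instead, as in \Cref{thm:bilinear-overlap}. Start from $\E_\xi e^{\lambda U_{ij}}\le \exp(\tfrac{\lambda^2}{2}S_\pi)$. Write $S_\pi=\tfrac1d+\sum_k u_kv_{\pi(k)}$ with $u_k=a_k^2$ and centered $v_k=b_k^2-\tfrac1d$. Then apply the permutation MGF bound \Cref{lem:scalar-comb-bern} at parameter $s=\lambda^2/2$. For $|\lambda|\le 1/(\norm{a}_\infty\norm{b}_\infty)$ this gives $\log\E e^{\lambda U_{ij}}\le 3\lambda^2/(2d)$. \Cref{lem:local-mgf} then yields $\Pp\{|U_{ij}|>t\}\le 2\exp\bigl(-\min\{dt^2/6,\ t/(2\norm{A}_\infty\norm{B}_\infty)\}\bigr)$. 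At $t=4\sqrt{L/d}$ both terms are at least $2L$, the linear one exactly by the hypothesis, and the union bound gives $2\eta^2\le\eta$. The fluctuation of $S_\pi$ is paid for only inside the MGF. This effectively replaces $S_\pi$ by $3/d$, below the $4/d$ your scheme needs, with no polynomially small exceptional event.

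Your negative-association route can be rescued, but it needs three things the proposal does not supply:
\begin{enumerate}
\item a Poisson/Bennett-type bound $\E e^{\lambda S_\pi}\le\exp\bigl((e^{\lambda M}-1)/(Md)\bigr)$ for $\lambda\ge0$;
\item a threshold strictly between about $3.6/d$ and $4/d$, where both exponents exceed $2L$;
\item a separate treatment of small $d$, using that the claim is trivial when $16L\ge d$.
\end{enumerate}
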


The same permutation mechanism can also be used to control norm
preservation after coordinate sampling. Let \(I\subseteq [d]\) with
\(|I|=m\), and consider the random map
\[
\Phi
=
\sqrt{\frac dm}\,
P_IAD_\xi P_\pi BD_{\xi'},
\]
where \(\xi,\xi'\) are independent Rademacher vectors and \(\pi\) is
an independent uniform permutation.

The behavior of the two orthogonal factors is quantified by the
\emph{sampled column energy}
\[
\kappa_I(A)
=
\frac dm
\max_{k\in[d]}
\sum_{j\in I}A_{jk}^2
\]
and the \emph{maximal variance}
\[
\Theta_B(X)
=
d\sup_{\substack{x\in X\\k\in[d]}}
\sum_{j=1}^d B_{kj}^2x_j^2.
\]
The first parameter measures the largest energy of a column of \(A\) carried by the selected rows, while the second controls the largest
variance of a coordinate of \(BD_{\xi'}x\), uniformly over
\(x\in X\).

\begin{theorem}\label{thm:finite-JL}
	Let \(X\subseteq \mathbb S^{d-1}\) with \(|X|=N\), let
	\(0<\eta<1/2\), and assume that \(1\le m\le d\). Set
	\begin{equation}\label{eq:finite-logs}
		\ell
		=
		\log\left(\frac{8N}{\eta}\right),
		\qquad
		L_d
		=
		\log\left(\frac{8Nd}{\eta}\right),
		\qquad
		L_m
		=
		\log\left(\frac{8Nm}{\eta}\right).
	\end{equation}
	Suppose the admissibility condition
	\begin{equation}\label{eq:finite-capacity}
		36\kappa_I(A)\Theta_B(X)\,mL_dL_m
		\le
		d.
	\end{equation}
	Then, with probability at least \(1-\eta\),
	\begin{equation}\label{eq:finite-conclusion}
		\sup_{x\in X}
		\left|
		\norm{\Phi x}_2-1
		\right|
		\le
		10\sqrt{\frac{\ell}{m}}.
	\end{equation}
\end{theorem}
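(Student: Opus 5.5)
The plan is to condition successively on the three sources of randomness, $\xi'$, then $\pi$, then $\xi$, and to reduce the claim to a Hanson--Wright / Hoeffding-type concentration for $\norm{\Phi x}_2^2$ over a fixed $x$, followed by a union bound over the $N$ points of $X$. Fix $x\in X$ and set $z=BD_{\xi'}x$, so $\norm{z}_2=1$ and $z_k=\sum_j B_{kj}\xi'_jx_j$. By Hoeffding, with probability at least $1-\eta/(8N d)$ per coordinate, $|z_k|^2\le 2\,\Theta_B(X)\,L_d/d$ for all $k$ simultaneously (this is where $L_d$ enters, through a union bound over $k\in[d]$ and $x\in X$). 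Next, $w=P_\pi z$ is a uniform rearrangement of $z$ and $\Phi x=\sqrt{d/m}\,P_IAD_\xi w$. Writing $M=\frac dm A^{\top}P_I^{\top}P_IA$, we get
\[
\norm{\Phi x}_2^2=\sum_{k,l} M_{kl}\xi_k\xi_l w_kw_l
=\sum_k M_{kk}w_k^2+\sum_{k\ne l}M_{kl}\xi_k\xi_lw_kw_l .
\]
Here $M_{kk}=\frac dm\sum_{j\in I}A_{jk}^2\le\kappa_I(A)$ and $\tr M=d$.

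\textbf{Diagonal term.} Conditionally on $z$, $\sum_kM_{kk}w_k^2=\sum_k M_{kk}z_{\pi(k)}^2$ is a linear statistic of a uniform permutation. Its mean is $\frac1d\tr M\cdot\norm z_2^2=1$. Each summand is bounded by $\kappa_I(A)\max_k z_k^2\le 2\kappa_I\Theta_B L_d/d$. Hoeffding's inequality for sampling without replacement (or a bounded-differences inequality for random permutations) then gives a deviation of order
\[
\sqrt{\Bigl(\textstyle\sum_k M_{kk}^2 z_{\pi(k)}^2\Bigr)\ell}\;\lesssim\;\sqrt{\kappa_I\max_k z_k^2\,\ell}.
\]
Under \eqref{eq:finite-capacity} this is at most $\sqrt{\ell/(mL_m)}\le\sqrt{\ell/m}$.

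\textbf{Off-diagonal term.} Conditionally on $w$, this is a Rademacher chaos, and I would apply Hanson--Wright to the matrix $(M_{kl}w_kw_l)_{k\ne l}$. Its operator norm is at most $\norm{M}\max_k w_k^2\le\frac dm\cdot\max_k z_k^2$, since $\norm{M}\le d/m$. Its Frobenius norm squared is at most $\max_kw_k^2\sum_{k}w_k^2\sum_l M_{kl}^2\le\max_k w_k^2\cdot\max_k (M^2)_{kk}$. Moreover $(M^2)_{kk}\le\frac dm M_{kk}\le\frac dm\kappa_I$. Plugging in $\max z_k^2\lesssim\Theta_BL_d/d$ gives a Frobenius norm of order $\sqrt{\kappa_I\Theta_BL_d/m}$, so the Hanson--Wright tail is of order
\[
\sqrt{\kappa_I\Theta_B L_d\,\ell/m}+\Theta_BL_d\,\ell/m .
\]

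\textbf{Main obstacle.} This bound is not obviously $\lesssim\sqrt{\ell/m}$: the admissibility condition only bounds $\kappa_I\Theta_BL_d$ by $d/(36mL_m)$, not by $1$. So the crude operator-norm estimate must be sharpened, and this is where the permutation must be used. I would first try replacing $\norm M$ by a bound on the restricted bilinear structure after permuting. Conditioning on $\pi$, the relevant quantity is $\max_k\sum_l M_{kl}^2w_l^2$. Averaging over $\pi$ gives $\frac1d\sum_lM_{kl}^2=\frac1d(M^2)_{kk}\le\kappa_I/m$, and a concentration over $\pi$ holds with a union bound over $k$; I expect the factor $L_m$ to arise here, using that $M$ has rank $m$ so only $m$ directions matter. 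This yields a row-wise bound $\max_k\sum_lM_{kl}^2w_l^2\lesssim\kappa_I L_m/m$. That bound controls the chaos through a decoupled Bernstein argument: first bound $\sum_l M_{kl}\xi_lw_l$ uniformly in $k$, then apply Hoeffding in $\xi_k$. The resulting deviation is $\sqrt{\ell\,\max_kw_k^2\cdot\kappa_IL_m/m\cdot L_m}\cdot\sqrt{d}\lesssim\sqrt{\ell/m}$ by \eqref{eq:finite-capacity}, and the constant $36$ is designed to absorb this.

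Finally, I would combine the diagonal and off-diagonal terms to get $|\norm{\Phi x}_2^2-1|\le 10\sqrt{\ell/m}$, and use $|t-1|\le|t^2-1|$ for $t\ge0$ to pass to norms. The remaining step is to split the failure probability $\eta$ into four parts of size $\eta/4$, covering the Hoeffding bound on $z$, the permutation concentration, the row-wise bound, and the final chaos bound, each union-bounded over $x\in X$. This accounts for the factors $8N/\eta$ inside $\ell$, $L_d$, and $L_m$.
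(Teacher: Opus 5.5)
Your treatment of $z$ and of the diagonal term matches the paper's Steps~1 and~3, but there is a genuine gap in the off-diagonal chaos. Set $K=D_wMD_w\succeq0$; in the paper's notation $K=T_x^{\mathsf T}T_x$. Any route through Hanson--Wright or decoupling needs both $\norm{K}_F^2\lesssim 1/m$ and $\norm{K}\lesssim 1/m$, with no loss in $\kappa_I$, $\Theta_B$ or logarithms. A row-wise bound $\max_k\sum_l M_{kl}^2w_l^2\le R$ gives neither, because it only controls the $2\to\infty$ norm of $K$, and converting that to an operator norm costs a factor $\sqrt d$. This is the $\sqrt d$ in your last display, and that display does not follow. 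After inserting $\max_kw_k^2\le 2\Theta_BL_d/d$, its square is $2\kappa_I\Theta_BL_dL_m^2\ell/m$. The admissibility condition only bounds this by $\frac{\ell}{m}\cdot\frac{dL_m}{18m}$, which is far larger than $\ell/m$ when $m\ll d$.

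Suppose instead you decouple honestly: bound $Y_k=\sum_l M_{kl}\xi'_lw_l$ uniformly in $k$ by $\sqrt{2RL_d}$, then apply Hoeffding in $\xi$ using $\sum_kw_k^2Y_k^2\le\max_kY_k^2$. The deviation is then of order $\sqrt{RL_d\ell}$. For the true size $R\asymp\kappa_I/m$ this is $\sqrt{\kappa_IL_d\ell/m}$, and the extra $\sqrt{\kappa_IL_d}$ destroys the optimal row count even for Hadamard factors. There are two further symptoms. The $\max_k$ already loses $\kappa_I$ at the Frobenius level, since $\max_k\frac1d(M^2)_{kk}=\kappa_I/m$. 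And a union bound over the $d$ rows produces $L_d$, not $L_m$; the rank $m$ enters naturally only as the dimensional factor of a \emph{matrix} concentration inequality.

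That matrix inequality is the missing idea. Let $Q=A^{\mathsf T}P_I^{\mathsf T}P_IA$, $q=z^2$ and $\rho=\max_jQ_{jj}=m\kappa_I/d$. Then $\norm{K}=\frac dm\norm{QD_{q_\pi}Q}$, and $QD_{q_\pi}Q-\frac1dQ=\sum_j(q_{\pi(j)}-\frac1d)\,Qe_je_j^{\mathsf T}Q$. The matrix Bernstein inequality for random permutations (\Cref{lem:mackey-comb}, packaged as \Cref{lem:weighted-projection}) has dimensional factor $2m$. Under the admissibility condition it gives $\norm{K}\le 2/m$ with failure probability $2m\exp(-1/(36\rho\Theta_BL_d))\le\eta/(4N)$. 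Since $K\succeq0$, this also gives $\norm{K}_F^2\le\norm{K}\tr K\lesssim1/m$, because $\tr K$ is your diagonal term.

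With these two bounds your Hanson--Wright route would close. It would do so only with unspecified absolute constants rather than $10$ and $36$, and the regime $\ell>m$ must be handled through $|t-1|\le\sqrt{|t^2-1|}$. The paper avoids the chaos altogether. It applies Talagrand's convex concentration (\Cref{lem:talagrand}) to $\xi\mapsto\norm{T_x\xi}_2$, whose Lipschitz constant is $\norm{T_x}=\norm{K}^{1/2}\le\sqrt{2/m}$, and centers it via $\norm{T_x}_F^2=\tr K$.

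For the diagonal term itself, the appropriate tool is a Bernstein inequality for linear permutation statistics (\Cref{lem:scalar-comb-bern}). Hoeffding for sampling without replacement does not cover weighted sums like $\sum_kM_{kk}z_{\pi(k)}^2$. Your intermediate proxy $\sum_kM_{kk}^2z_{\pi(k)}^2$ is the independent-summands proxy and is too large. Your final variance $\kappa_I\max_kz_k^2$ is nevertheless the right one.
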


In particular, taking $m\asymp\eps^{-2}\log(N/\eta)$ gives optimal embedding dimension throughout the admissible range
\[
\kappa_I(A)\Theta_B(X)\,
m
\log\left(\frac{Nd}{\eta}\right)
\log\left(\frac{Nm}{\eta}\right)
\lesssim d.
\]
When $\eta$ is fixed and \(\log N\) dominates \(\log d\),
one has \(L_d,L_m\asymp\log N\), and the admissibility condition becomes
\[
\kappa_I(A)\Theta_B(X)\,
\eps^{-2}(\log N)^3
\lesssim d.
\]
Since \(\kappa_I(A)\Theta_B(X)\ge1\), the largest cardinality range
covered by this criterion is
\[
N
\le
\exp\left(
C\eps^{2/3}d^{1/3}
\right).
\]
This scale is reached when both local parameters are bounded by universal constants.

\begin{remark}\label{rem:remark}
	The appearance of the cube-root scale is not merely an artifact of
	the proof, although it should not be interpreted as a limitation for
	every orthogonal ensemble. In \Cref{sec:limitations}, we construct orthogonal matrices \(A,B \in O(d)\) and a vector \(x\in\mathbb S^{d-1}\) satisfying
	\[
	\kappa_I(A)=\Theta_B(\{x\})=1,
	\qquad
	m=\sqrt d,
	\]
	for which constant distortion occurs with probability at least
	\[
	\exp\left[
	-Cd^{1/3}(\log d)^{2/3}
	\right].
	\]
	Thus, up to logarithmic factors, a general theorem expressed only in
	terms of \(\kappa_I(A)\) and \(\Theta_B(X)\) cannot, in general,
	surpass the cube-root range. Stronger concentration for particular ensembles must exploit additional structure not captured by these two parameters.
\end{remark}

Finally, in Section \ref{sec:infinite-sets} we extend Theorem \ref{thm:finite-JL} to several structured infinite
models. These include sparse vectors, low-rank matrices, and finite unions
of subspaces, with the optimal number of rows throughout the
corresponding admissible range. More generally, for a broad class of
structured cones, we obtain optimal embedding dimension governed by
their Gaussian width.

The paper is organized as follows. In \Cref{sec:tools}, we introduce
the notation and collect the scalar and matrix concentration estimates
for randomly permuted sums used throughout the paper.
\Cref{sec:bilinear-overlap} studies bilinear forms associated with
randomly permuted orthogonal products and proves the coherence
amplification principle \Cref{cor:coherence-amplification}. In \Cref{sec:finite},
we prove \Cref{thm:finite-JL} and develop the sampled column energy and
maximal variance framework for finite sets. In
\Cref{sec:infinite-sets}, we introduce quadratic norming families and
extend our results to structured infinite models.
\Cref{sec:mixers} applies this framework to specific orthogonal
constructions, with particular emphasis on the Kac walk, ORA, and
parallel ORA, and contains the proofs of the corresponding results
stated above. Finally, \Cref{sec:limitations} gives an obstruction
showing that the sampled column energy and maximal variance alone
cannot surpass the cube-root barrier discussed in Remark \ref{rem:remark}.
Auxiliary permutation, coupling, and moment arguments are deferred to
the appendices.

\section{Notation and preliminary results}\label{sec:tools}

\subsection{Notation}

For \(d\in\mathbb N\), we write \([d]=\{1,\ldots,d\}\), and denote by
\(S_d\) the permutation group on \([d]\). We write
\(\mathbf 1=(1,\ldots,1)\in\R^d\). For \(x\in\R^d\) and
\(\pi\in S_d\), set 
\[x_\pi=(x_{\pi(1)},\ldots,x_{\pi(d)}),\] 
and let \(P_\pi\) be the permutation matrix satisfying \(P_\pi x=x_\pi\).
Thus, \((P_\pi M)_{kj}=M_{\pi(k),j}\).

For a vector \(x=(x_1,\ldots,x_d)\in\R^d\), and
\(1\le p<\infty\), we write
\[
\norm{x}_p
=
\left(\sum_{j=1}^d |x_j|^p\right)^{1/p},
\qquad
\norm{x}_\infty
=
\max_{j\in[d]}|x_j|,
\qquad \|x\|_0=|\{j:x_j\neq0\}|.
\]
The unit Euclidean sphere of \(\R^d\) is denoted by
\(\mathbb S^{d-1}\). For \(x,y\in\R^d\), we write $\langle x,y\rangle=\sum_{j=1}^d x_jy_j$ for the Euclidean inner product. 

We denote by \(I_d\) the \(d\times d\) identity matrix and, for
\(x\in\R^d\), write \(D_x=\operatorname{diag}(x_1,\ldots,x_d)\). For a real matrix
\(M\), we write \(M_{ij}\) for the entry of \(M\) in row \(i\) and
column \(j\), and \(M^{\mathsf T}\) for its transpose. The
orthogonal group is denoted by
\(O(d)=\{A\in\R^{d\times d}:A^{\mathsf T}A=I_d\}\). If \(M\) and
\(N\) are symmetric matrices of the same size, we write \(M\succeq0\)
when \(M\) is positive semidefinite, and \(M\preceq N\) when
\(N-M\succeq0\).

For a real matrix \(M\in\R^{m\times d}\), its operator, Frobenius, and maximum
norms are
\[
\norm{M}
=
\sup_{x\in\mathbb S^{d-1}}\norm{Mx}_2,
\qquad
\norm{M}_F
=
\left(\sum_{i,j}|M_{ij}|^2\right)^{1/2},
\qquad
\|M\|_\infty = \max_{i,j} |M_{ij}|,
\]
respectively. We often refer to $\|M\|_\infty$ as the coherence of $M$. The trace of a square matrix \(M\) is denoted by
\(\tr M=\sum_iM_{ii}\), and its permanent by
\begin{equation}\label{eq:per}
\operatorname{per}(M)
=
\sum_{\pi\in S_d}
\prod_{i=1}^d M_{i,\pi(i)}.
\end{equation}
For matrices \(M,N\) of the same size,
we use the same notation for the Frobenius inner product
\[
\langle M,N\rangle=\tr(M^{\mathsf T}N).
\]
If \(\mu\) is a probability measure, then \(\Pp_\mu\) and \(\E_\mu\)
denote probability and expectation with respect to \(\mu\). For
\(1\le p<\infty\),
\[
\norm{X}_{L^p(\mu)}
=
\left(\E_\mu |X|^p\right)^{1/p}.
\]
When the underlying measure is clear, we simply write
\(\norm{X}_{L^p}\). We use subscripts such as \(\Pp_\pi\),
\(\E_\pi\), and \(L^p(\pi)\) when the randomness comes from a uniform
permutation. We write \(X\stackrel{\mathrm d}=Y\) to denote equality in distribution.

\subsection{Concentration inequalities for permuted sums}

A central object in this work is a randomly permuted sum of the form
\[
\sum_{k=1}^d a_k b_{\pi(k)},
\]
where \(a,b\in\R^d\) are deterministic vectors and
\(\pi\) is uniformly distributed on \(S_d\). Concentration inequalities for arrays
of the form
\[
\sum_{k=1}^d c_{k,\pi(k)}
\]
were obtained by Albert \cite{Albert2019}; see also
\cite{Barber2024} for related concentration inequalities for weighted
sums of exchangeable random variables. We will only require the simple case
\(c_{ij}=a_i b_j\). For completeness, we provide a particularly short proof in Appendix \ref{app:scalar-permutation-mgf}.

\begin{lemma}\label{lem:scalar-comb-bern}
	Let \(a,b\in\R^d\) satisfy $\sum_i b_i=0$ and let \(\pi\) be uniform on \(S_d\). Set
	\[
	\sigma^2
	=
	\frac{\norm{a}_2^2\norm{b}_2^2}{d},
	\qquad
	M
	=
	\norm{a}_\infty\norm{b}_\infty.
	\]
	Then
	\begin{equation}\label{eq:scalar-permutation-mgf}
		\log \E_\pi
		\exp\left(
		\lambda \sum_{k=1}^d a_k b_{\pi(k)}
		\right)
		\le
	4\lambda^2 \sigma^2 \qquad \text{for } |\lambda|\leq \frac{1}{M}
	\end{equation}
\end{lemma}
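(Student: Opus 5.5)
We prove the bound by exchangeable-pair/martingale arguments... Rather, the plan is to compare the permuted sum with a sum of independent terms through a random-transposition (Chatterjee–style) or martingale approach. Concretely, we use the Doob martingale with respect to revealing $\pi(1),\pi(2),\ldots$ sequentially, but this yields only Hoeffding-type bounds with $\norm{a}_\infty$. A cleaner path, giving the stated variance $\sigma^2=\norm{a}_2^2\norm{b}_2^2/d$, is a coupling/comparison with sampling with replacement in the spirit of Hoeffding's classical theorem: for convex $f$, $\E f(\sum_k a_k b_{\pi(k)})$ is dominated by an analogous expression built from i.i.d.\ indices. We therefore proceed as follows.

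\emph{Step 1 (centering and decoupling of $a$).} Since $\sum_i b_i=0$, the sum is invariant under $a\mapsto a-\bar a\mathbf 1$, but we do not need this. Instead write $S=\sum_k a_k b_{\pi(k)}$ and, conditionally on $\pi$, introduce independent Rademacher signs: by exchangeability one shows $\E_\pi e^{\lambda S}\le \E_{\pi,\eps} e^{2\lambda\sum_k \eps_k a_k b_{\pi(k)}}$ via a symmetrization obtained from splitting $\pi$ through a uniform random perfect matching of positions (swapping the images of matched pairs is measure preserving). Concretely, pairing $k$ with $k'$ and averaging over the swap gives terms $\tfrac12(a_k-a_{k'})(b_{\pi(k)}-b_{\pi(k')})\eps_{kk'}$ plus a symmetric part; the symmetric part is handled by Jensen and the mean-zero condition on $b$. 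This step is the main obstacle: getting the reduction with only a constant loss (we aim for factor $2$ in $\lambda$) and correctly using $\sum b_i=0$ to kill the drift term.

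\emph{Step 2 (sub-Gaussian/Bernstein estimate for the signed sum).} Conditionally on $\pi$, $\sum_k \eps_k a_k b_{\pi(k)}$ is a Rademacher sum with variance proxy $\sum_k a_k^2 b_{\pi(k)}^2$, so $\E_\eps e^{\mu\sum \eps_k a_kb_{\pi(k)}}\le \exp(\tfrac{\mu^2}{2}\sum_k a_k^2 b_{\pi(k)}^2)$. It remains to bound $\E_\pi \exp(\tfrac{\mu^2}{2}\sum_k a_k^2b_{\pi(k)}^2)$. The mean of the exponent is $\tfrac{\mu^2}{2}\sigma^2$. By Hoeffding's comparison (permutation sums of nonnegative terms are dominated in convex order by i.i.d.\ sampling with replacement), this is at most $\prod_k \frac1d\sum_j e^{\frac{\mu^2}{2}a_k^2b_j^2}$. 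Using $e^t\le 1+t e^{t}$ and $|\mu|a_k|b_j|\le 2$ when $|\lambda|\le 1/M$, each factor is at most $\exp(c\mu^2 a_k^2\norm{b}_2^2/d)$ with $c$ absolute, giving $\exp(c\mu^2\sigma^2)$.

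\emph{Step 3 (constants).} With $\mu=2\lambda$ and $|\lambda|\le 1/M$, the bound $e^{t}\le 1+te^{2}$ for $t\le 2$ yields a constant near $4$; we track constants to match $4\lambda^2\sigma^2$, adjusting the symmetrization loss in Step 1 if necessary (e.g.\ obtaining $\mu=\lambda$ directly via the pair-swap argument, which avoids the factor $2$).
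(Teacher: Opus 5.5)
Your proposal has a genuine gap: the reduction in Step~1 is never established, and the mechanism you sketch for it does not work. Fix a matching of positions and condition on the unordered images $\{\pi(k),\pi(k')\}$; call this $\sigma$-field $\mathcal G$. Then $S=S_{\mathrm{sym}}+\sum_{\text{pairs}}\eps_{kk'}\tfrac12(a_k-a_{k'})(b_{\pi(k)}-b_{\pi(k')})$ with $S_{\mathrm{sym}}=\E[S\mid\mathcal G]$, so $\E e^{\lambda S}=\E\bigl[e^{\lambda S_{\mathrm{sym}}}\prod\cosh(\cdots)\bigr]$. Jensen only gives $\E e^{\lambda S_{\mathrm{sym}}}\le\E e^{\lambda S}$, which is the wrong direction, so the symmetric part cannot be discarded. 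Bounding it means controlling another permuted sum, which is the original problem again.

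Your fallback of symmetrizing directly at $\mu=\lambda$ is false. For $d=2$ and $a=b=(1,-1)$ one has $\sum_k a_kb_{\pi(k)}=\pm2$, so $\E_\pi e^{\lambda S}=\cosh 2\lambda$. On the other hand, $\E_{\pi,\eps}e^{\lambda\sum_k\eps_ka_kb_{\pi(k)}}=\cosh^2\lambda=\tfrac12(1+\cosh 2\lambda)<\cosh2\lambda$. The same example shows that the convex-order comparison with sampling with replacement fails for signed weights (variance $4$ versus $2$). For the nonnegative weights $a_k^2$ of Step~2 the comparison does hold. However, that is not Hoeffding's theorem, which concerns unweighted samples, and it needs its own argument, e.g.\ Hoeffding's block decomposition plus Schur-convexity of $w\mapsto\E f(\sum_k w_kX_k)$ for exchangeable $X$.

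Even granting Step~1 with $\mu=2\lambda$, the route as written cannot reach the constant $4$. After $\cosh x\le e^{x^2/2}$ you must bound $\E_\pi\exp\bigl(2\lambda^2\sum_k a_k^2b_{\pi(k)}^2\bigr)$. Take $a=e_1$, $b=(d-1,-1,\dots,-1)$ and $\lambda=1/M=1/(d-1)$, so $\sigma^2=d-1$. Then this quantity equals $1+(e^2-1)/d+O(d^{-2})$, which exceeds the target $e^{4\lambda^2\sigma^2}=1+4/d+O(d^{-2})$ for large $d$.

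The missing idea is a decoupling inequality tailored to permutations. The paper writes $\E_\pi e^{\lambda S}=\operatorname{per}(Z)/d!$ with $Z_{ij}=e^{\lambda a_ib_j}$. It then applies the Carlen--Lieb--Loss permanent inequality $|\operatorname{per}(Z)|\le d!\prod_i\bigl(\frac1d\sum_j|Z_{ij}|^2\bigr)^{1/2}$. This replaces the random permutation by independent row-wise averages, at the cost of $\lambda\mapsto2\lambda$ inside and a square root outside. Next, $e^u\le 1+u+2u^2$ on $|u|\le2$ together with $\sum_jb_j=0$ gives $\frac1d\sum_je^{2\lambda a_ib_j}\le 1+8\lambda^2a_i^2\norm{b}_2^2/d$. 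The square root then turns $8$ into the stated $4$.
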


We next record the specialization of a more general matrix-valued
permutation inequality due to Mackey, Jordan, Chen, Farrell, and Tropp
\cite[Corollary~10.3]{MackeyEtAl2014} that will be needed below.

\begin{lemma}\label{lem:mackey-comb}
	Let \(b\in\R^d\) satisfy \(\sum_{k=1}^d b_k=0\), let
	\(X_1,\ldots,X_d\) be symmetric \(r\times r\) matrices, and let
	\(\pi\) be uniform on \(S_d\). Set
	\[
	\sigma^2
	=
	\frac{\norm{b}_2^2}{d}
	\left\|
	\sum_{j=1}^d X_j^2
	\right\|,
	\qquad
	M
	=
	\norm{b}_\infty\max_{j\in[d]}\norm{X_j}.
	\]
	Then
	\[
	\probb{\pi}{\norm{\sum_{j=1}^d b_{\pi(j)}X_j}\ge t}
	\le
	2r\exp\left(
	-\frac{t^2}
	{12\sigma^2+4\sqrt2\,Mt}
	\right)
	\qquad \text{for }t\geq0.
	\]
\end{lemma}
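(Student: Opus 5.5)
The plan is to obtain the lemma as a direct instance of the combinatorial matrix Bernstein inequality of Mackey, Jordan, Chen, Farrell, and Tropp \cite[Corollary~10.3]{MackeyEtAl2014}, applied twice to get a two-sided bound. As I recall it, that result reads as follows. Let \((A_{jk})_{j,k=1}^d\) be deterministic symmetric \(r\times r\) matrices satisfying the centering condition \(\sum_{k=1}^d A_{jk}=0\) for every \(j\) and the bound \(\norm{A_{jk}}\le R\). Set \(Y=\sum_{j}A_{j,\pi(j)}\) with \(\pi\) uniform on \(S_d\), and
\[
\sigma^2=\frac1d\Big\|\sum_{j,k}A_{jk}^2\Big\|.
\]
Then
\[
\Pp_\pi\{\lambda_{\max}(Y)\ge t\}\le r\exp\!\Big(-\frac{t^2}{12\sigma^2+4\sqrt2\,Rt}\Big).
\]
Before using it, I would check its exact normalization, in particular the factor \(1/d\) in \(\sigma^2\), against the source.

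The key step is the choice of array \(A_{jk}=b_kX_j\), which gives \(Y=\sum_j b_{\pi(j)}X_j\), exactly the matrix in the statement. I then verify the three hypotheses in turn.
\begin{enumerate}
\item \emph{Centering.} For each fixed \(j\), \(\sum_k A_{jk}=\big(\sum_k b_k\big)X_j=0\), using \(\sum_k b_k=0\).
\item \emph{Uniform bound.} \(\norm{A_{jk}}=|b_k|\,\norm{X_j}\le \norm{b}_\infty\max_j\norm{X_j}=M\), so we may take \(R=M\).
\item \emph{Variance.} Since \(A_{jk}^2=b_k^2X_j^2\),
\[
\sum_{j,k}A_{jk}^2=\norm{b}_2^2\sum_jX_j^2,
\]
and dividing the operator norm by \(d\) recovers precisely the \(\sigma^2\) of the lemma.
\end{enumerate}
This gives the one-sided bound \(\Pp\{\lambda_{\max}(Y)\ge t\}\le r\exp(-t^2/(12\sigma^2+4\sqrt2Mt))\).

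For the lower tail, I replace \(b\) by \(-b\). This keeps the centering condition and leaves \(M\) and \(\sigma^2\) unchanged, and it turns \(Y\) into \(-Y\). The same argument therefore bounds \(\Pp\{\lambda_{\max}(-Y)\ge t\}\) by the same quantity. Since \(Y\) is symmetric, \(\norm{Y}=\max\{\lambda_{\max}(Y),\lambda_{\max}(-Y)\}\), and a union bound yields the factor \(2r\).

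There is no real obstacle here. The only point requiring care is matching conventions with \cite{MackeyEtAl2014}: whether their array is indexed as \(A_{j,\pi(j)}\) or \(A_{\pi(j),j}\), and where the centering is imposed. Relabelling by \(\pi\mapsto\pi^{-1}\), which is again uniform, handles either convention. If their centering is instead imposed on the other index, I would transpose the roles of \(j\) and \(k\) in the array accordingly.
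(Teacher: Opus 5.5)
Your proposal is correct and follows the same approach as the paper, which instantiates \cite[Corollary~10.3]{MackeyEtAl2014} with the array $A_{jk}=b_kX_j$. Your checks of the centering, the uniform bound $R=M$, and the variance parameter, together with the symmetrization $b\mapsto -b$ that yields the factor $2r$, spell out the details the paper leaves implicit.
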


Indeed, this follows from their matrix-array result by taking
\(A_{jk}=b_kX_j\). More recent concentration inequalities for
exchangeable matrix-valued and tensor-valued arrays appear in \cite{ChengBarber2026}. The preceding result is sufficient for all the arguments in this paper.

We will also use the following convex Lipschitz concentration inequality due to Talagrand; see \cite[p.181]{Talagrand2021}.

\begin{lemma}[Talagrand's inequality]
	\label{lem:talagrand}
	Let \(\xi\in\{-1,1\}^d\) be a Rademacher vector, and let
	\(f:\mathbb R^d\to\mathbb R\) be convex and \(L\)-Lipschitz with
	respect to the Euclidean norm. If \(M_f\) is a median of \(f(\xi)\),
	then
	\[
	\Pp\left\{
	\left|f(\xi)-M_f\right|\ge t
	\right\}
	\le
	4\exp\left(
	-\frac{t^2}{8L^2}
	\right) \qquad \text{for } t\geq0.
	\]
\end{lemma}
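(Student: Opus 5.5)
We would deduce the lemma from Talagrand's convex distance inequality on the discrete cube. Recall its statement: for a product probability measure on $\{-1,1\}^d$ and any set $A\subseteq\{-1,1\}^d$,
\[
\Pp\{\xi\in A\}\,\E\exp\left(\tfrac14 d_T(\xi,A)^2\right)\le 1,
\]
where $d_T(x,A)=\sup_{\|\alpha\|_2\le1,\ \alpha\ge0}\inf_{y\in A}\sum_{i:x_i\ne y_i}\alpha_i$ is the convex distance. The geometric fact we use is that on the cube, where coordinates differ by at most $2$,
\[
d_T(x,A)\ \ge\ \tfrac12\operatorname{dist}_2\bigl(x,\operatorname{conv}A\bigr).
\]
This follows from the minimax (dual) description of $d_T$ as the Euclidean distance from $0$ to the convex hull of the vectors $(\mathbf 1_{x_i\ne y_i})_i$, $y\in A$. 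Combining the two with Markov's inequality gives
\[
\Pp\{\xi\in A\}\,\Pp\bigl\{\operatorname{dist}_2(\xi,\operatorname{conv}A)\ge s\bigr\}\le e^{-s^2/c}
\]
for an absolute constant $c$.

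\textbf{Upper tail.} Take $A=\{x\in\{-1,1\}^d: f(x)\le M_f\}$, so $\Pp\{\xi\in A\}\ge1/2$ by the definition of the median. Since $f$ is convex, its sublevel set $\{f\le M_f\}\subseteq\R^d$ is convex and contains $A$, hence contains $\operatorname{conv}A$. If $f(x)\ge M_f+t$, then the Lipschitz property gives $\operatorname{dist}_2(x,\operatorname{conv}A)\ge t/L$. Therefore
\[
\Pp\{f(\xi)\ge M_f+t\}\le 2e^{-t^2/(cL^2)}.
\]

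\textbf{Lower tail.} Here we reverse the roles. Take $B=\{x: f(x)\le M_f-t\}$. Every $x$ with $f(x)\ge M_f$ lies at Euclidean distance at least $t/L$ from $\operatorname{conv}B\subseteq\{f\le M_f-t\}$. Since $\Pp\{f(\xi)\ge M_f\}\ge1/2$, the same inequality gives
\[
\Pp\{f(\xi)\le M_f-t\}\le 2e^{-t^2/(cL^2)}.
\]
Adding the two tails yields the stated form with prefactor $4$.

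\textbf{Main obstacle.} The only delicate point is the constant in the exponent. The crude route above, with $e^{-d_T^2/4}$ and the factor $2$ from the cube's side length, gives $c=16$ rather than $8$. To reach $8L^2$ we would sharpen the comparison between $d_T$ and Euclidean distance to $\operatorname{conv}A$; alternatively, we would follow the normalization in \cite[p.181]{Talagrand2021} directly, where the inequality is stated for convex Lipschitz functions of independent bounded variables with the appropriate diameter. Since constants of this type play no role later in the paper, we would simply cite that source for the precise constant. The argument above establishes the structure of the bound, namely median concentration at sub-Gaussian scale $L$ uniformly in $d$.
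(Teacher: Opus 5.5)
The paper gives no argument for this lemma; it cites \cite[p.~181]{Talagrand2021} for exactly this form, so your final fallback coincides with what the paper does. Your self-contained part is correct as far as it goes: the convex distance inequality, the comparison with $\operatorname{dist}_2(\cdot,\operatorname{conv}A)$, and the two median arguments. However, it proves $4\exp(-t^2/(16L^2))$, not the stated bound, and the repair you propose cannot close the gap.

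On the cube the comparison is an identity. If $y^k\in A$ and $s^k_i=\mathbf 1_{\{x_i\neq y^k_i\}}$, then $x-y^k=2D_xs^k$. Hence $x-\sum_k\lambda_ky^k=2D_x\sum_k\lambda_ks^k$ with $D_x$ orthogonal, and the dual description of $d_T$ gives $\operatorname{dist}_2(x,\operatorname{conv}A)=2d_T(x,A)$ exactly. There is nothing to sharpen there. The factor $2$ is lost entirely in the exponent $\tfrac14 d_T^2$ of the general product-measure inequality. Rescaling the bounded-variables version from $[0,1]$ to $[-1,1]$ also gives $16$, so that alternative does not help either.

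The missing ingredient is the form specific to the uniform measure on $\{-1,1\}^d$. For every closed convex $C\subseteq\R^d$ with $\Pp\{\xi\in C\}>0$,
\[
\Pp\{\xi\in C\}\,\E\exp\left(\tfrac18\operatorname{dist}_2(\xi,C)^2\right)\le1,
\]
which is $\tfrac12 d_T^2$ in your units. It follows by induction on $d$. Let $C_\pm$ be the sections of $C$ at last coordinate $\pm1$. Convexity gives
\[
\operatorname{dist}_2\bigl((x,1),C\bigr)^2\le\lambda\operatorname{dist}_2(x,C_+)^2+(1-\lambda)\operatorname{dist}_2(x,C_-)^2+4(1-\lambda)^2,
\]
and symmetrically at $(x,-1)$. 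Apply H\"older's inequality and the induction hypothesis, and take $\lambda=1$ on the side with the larger section. Because both levels carry weight $\tfrac12$, the induction step reduces to the scalar inequality
\[
\inf_{0\le\lambda\le1}r^{-\lambda}e^{(1-\lambda)^2/2}\le\frac{3-r}{1+r},\qquad 0\le r\le1,
\]
where $r$ is the ratio of the smaller to the larger section measure. This is elementary: take $\lambda=\max\{0,1+\log r\}$. It is tight to second order at $r=1$, so $\tfrac18$ is exactly what this induction allows.

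With this in hand, your two tail arguments go through verbatim. Use the convex sets $\{f\le M_f\}$ and $\{f\le M_f-t\}$ directly; no convex hulls are needed, since sublevel sets of a convex $f$ are convex. Each tail is then at most $2e^{-t^2/(8L^2)}$, which gives the stated bound.

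Finally, your remark that the constant plays no role later is not accurate for this paper. Step~4 of the proof of Theorem~\ref{thm:finite-JL} computes $32\norm{T_x}^2$ and $e^{-mt^2/16}$ from the $8$. That yields the explicit constant $10$ there, and hence the constants $2025$ and $600$ downstream. With $16$ these would have to be recomputed, although qualitatively nothing changes.
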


Finally, we record the standard Bernstein tail and moment estimates that
follow from a local sub-Gaussian bound on the moment-generating function;
see, for example, \cite[Sections~2.8--2.9]{Vershynin2026}.

\begin{lemma}\label{lem:local-mgf}
	Let \(X\) be a random variable and let \(\sigma,M>0\). Suppose that
	\[
	\log \E e^{\lambda X}
	\le
	\lambda^2\sigma^2
	\qquad
	\text{for }|\lambda|\le\frac1M.
	\]
	Then, for every \(t\ge0\),
	\[
	\Pp\{|X|\ge t\}
	\le
	2\exp\left[
	-\min\left\{
	\frac{t^2}{4\sigma^2},
	\frac{t}{2M}
	\right\}
	\right].
	\]
\end{lemma}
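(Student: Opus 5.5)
The plan is the standard Chernoff argument. Apply Markov's inequality to \(e^{\lambda X}\) and to \(e^{-\lambda X}\), then optimize \(\lambda\) over the admissible range \(|\lambda|\le 1/M\). Fix \(t\ge0\). For \(0\le\lambda\le 1/M\), the hypothesis gives
\[
\Pp\{X\ge t\}\le e^{-\lambda t}\,\E e^{\lambda X}\le \exp\left(\lambda^2\sigma^2-\lambda t\right).
\]
The hypothesis is symmetric in the sign of \(\lambda\), so applying the same bound to \(-X\) with \(-\lambda\) gives the identical estimate for \(\Pp\{X\le -t\}\). A union bound then yields \(\Pp\{|X|\ge t\}\le 2\exp(\lambda^2\sigma^2-\lambda t)\) for every \(\lambda\in[0,1/M]\).

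It remains to choose \(\lambda\), which splits into two cases according to whether the unconstrained minimizer is admissible. The exponent \(\lambda^2\sigma^2-\lambda t\) is minimized over all reals at \(\lambda^*=t/(2\sigma^2)\), where its value is \(-t^2/(4\sigma^2)\).

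\emph{Case 1: \(t/(2\sigma^2)\le 1/M\).} Then \(\lambda^*\) is admissible, and we obtain the bound \(2\exp(-t^2/(4\sigma^2))\).

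\emph{Case 2: \(t/(2\sigma^2)>1/M\).} Take \(\lambda=1/M\). The case condition means \(\sigma^2/M< t/2\), so
\[
\frac{\sigma^2}{M^2}-\frac tM=\frac1M\left(\frac{\sigma^2}{M}-t\right)\le -\frac{t}{2M},
\]
which gives \(2\exp(-t/(2M))\).

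In each case the exponent obtained is at most \(-\min\{t^2/(4\sigma^2),\,t/(2M)\}\), since the negative of a quantity is at most the negative of any minimum containing it. This proves the claim.

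There is no real obstacle here. The only point needing care is the second case, where the constraint \(|\lambda|\le 1/M\) binds: there one must check that the boundary choice \(\lambda=1/M\) still loses at most half of the linear term, and that is exactly what the case condition \(\sigma^2/M<t/2\) guarantees.
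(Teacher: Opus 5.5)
Your proof is correct and follows essentially the same route as the paper: a Chernoff bound via Markov's inequality for \(0\le\lambda\le1/M\), the choice \(\lambda=t/(2\sigma^2)\) when \(t\le2\sigma^2/M\) and \(\lambda=1/M\) otherwise, and a union bound with \(-X\). The only cosmetic difference is that you union-bound before optimizing \(\lambda\) and write out the boundary-case computation explicitly.
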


\begin{proof}
	For \(0\le\lambda\le1/M\), Markov's inequality and the hypothesis give
	\[
	\Pp\{X\ge t\} = \prob{e^{\lambda X} \geq e^{\lambda t}} \leq \frac{\mathbb{E} e^{\lambda X}}{e^{\lambda t}}
	\le
	\exp\left(\lambda^2\sigma^2 - \lambda t\right).
	\]
	If \(t\le2\sigma^2/M\), then taking $\lambda = t/(2\sigma^2)$ yields \(\Pp\{X\ge t\}\le \exp \left (-t^2/(4\sigma^2) \right )\). Otherwise, taking $\lambda = 1/M$ gives
	\(\Pp\{X\ge t\}\le \exp \left (-t/(2M)\right )\). Applying the same argument to \(-X\) and taking a union bound completes the proof.
\end{proof}

\section{Permutation concentration and amplification}
\label{sec:bilinear-overlap}

Let \(A,B\in O(d)\) be orthogonal matrices, let $D_\xi \in \mathbb{R}^{d\times d}$ be a Rademacher diagonal, and let \(P_\pi \in \mathbb{R}^{d\times d}\) be the permutation matrix associated with an independent uniform permutation \(\pi\in S_d\), defined by
\[
(P_\pi x)_k=x_{\pi(k)}.
\] 
Our object of interest is the random orthogonal matrix
\begin{equation}\label{eq:U}
U=AD_\xi P_\pi B \in O(d).
\end{equation}

Our first result combines the standard sub-Gaussian concentration of
Rademacher sums with concentration inequalities for randomly permuted
sums to obtain concentration estimates for bilinear forms $\langle y,Ux\rangle$, where \(x,y\in\mathbb S^{d-1}\). The idea is that, conditionally
on the permutation $\pi$, the random variable $\langle y,Ux\rangle$ is a Rademacher sum with variance
\[
\sum_{k=1}^d
(A^{\mathsf T}y)_k^2
(Bx)_{\pi(k)}^2.
\]
Note that this conditional variance is itself a random permuted sum, and \Cref{lem:scalar-comb-bern} can be used to analyze its fluctuation around its mean $1/d$. 

\begin{theorem}\label{thm:bilinear-overlap}
	Let \(A,B\in O(d)\), let $D_\xi \in \mathbb{R}^{d\times d}$  be a Rademacher
	diagonal, and let \(\pi\) be an independent uniform permutation on
	\(S_d\). Set
	\[
	U=AD_\xi P_\pi B.
	\]
	Then, for every $x, y \in \mathbb{S}^{d-1}$,
	\[
	\log \E_{\xi,\pi} \left ( e^{\lambda\langle y,Ux\rangle} \right )
	\le
	\frac{3\lambda^2}{2d} \qquad \text{for } |\lambda| \leq \frac{1}{\norm{A^{\mathsf T}y}_\infty
		\norm{Bx}_\infty}.
	\]
\end{theorem}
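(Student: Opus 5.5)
Write $a=A^{\mathsf T}y$ and $b=Bx$; both are unit vectors. Since $\langle y,Ux\rangle=\langle a, D_\xi P_\pi b\rangle=\sum_{k}\xi_k a_k b_{\pi(k)}$, the plan is to condition on $\pi$ and integrate out $\xi$ first. The elementary bound $\cosh t\le e^{t^2/2}$ gives
\[
\E_\xi e^{\lambda\langle y,Ux\rangle}\le \exp\Bigl(\frac{\lambda^2}{2}\sum_{k=1}^d a_k^2 b_{\pi(k)}^2\Bigr).
\]
It then remains to bound $\E_\pi\exp\bigl(\frac{\lambda^2}{2}S_\pi\bigr)$, where $S_\pi=\sum_k a_k^2b_{\pi(k)}^2$. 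Since $\E_\pi S_\pi=\|a\|_2^2\|b\|_2^2/d=1/d$, we write $S_\pi=\frac1d+\sum_k \alpha_k\beta_{\pi(k)}$ with $\alpha_k=a_k^2$ and $\beta_j=b_j^2-\frac1d$. The centering gives $\sum_j\beta_j=0$, which is the hypothesis needed to apply \Cref{lem:scalar-comb-bern}.

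For this application we estimate the parameters. First, $\|\alpha\|_2^2=\sum a_k^4\le\|a\|_\infty^2$. Next, $\|\beta\|_2^2=\sum b_j^4-\frac1d\le\|b\|_\infty^2$. Finally, $\|\alpha\|_\infty\|\beta\|_\infty\le\|a\|_\infty^2\|b\|_\infty^2=:M_0^2$, using $|b_j^2-1/d|\le\max(b_j^2,1/d)\le\|b\|_\infty^2$ because $\|b\|_\infty^2\ge1/d$. The lemma therefore applies with parameter $\mu=\lambda^2/2$, variance proxy $\sigma^2\le M_0^2/d$, and range constant $M\le M_0^2$. It gives
\[
\log\E_\pi e^{\mu\sum_k\alpha_k\beta_{\pi(k)}}\le 4\mu^2\frac{M_0^2}{d}\qquad\text{whenever }\mu\le 1/M_0^2 .
\]

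With $|\lambda|\le 1/M_0$ we have $\mu=\lambda^2/2\le 1/(2M_0^2)$, so the lemma is applicable. Its bound becomes $4\cdot\frac{\lambda^4}{4}\cdot\frac{M_0^2}{d}=\frac{\lambda^2}{d}\,(\lambda^2M_0^2)\le \frac{\lambda^2}{d}$. Adding the mean contribution $\mu/d=\lambda^2/(2d)$ gives the total $\frac{\lambda^2}{2d}+\frac{\lambda^2}{d}=\frac{3\lambda^2}{2d}$, which matches the stated constant exactly. This suggests the argument above is the intended route.

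The main care point is controlling $\|\beta\|_\infty$ and $\|\beta\|_2$ by $\|b\|_\infty^2$ and $\|b\|_\infty$ respectively; it uses $\|b\|_\infty^2\ge 1/d$ and needs the centering to be handled carefully. A second point is to check that the range condition of \Cref{lem:scalar-comb-bern} is stated in terms of $\|\alpha\|_\infty\|\beta\|_\infty$, so that the bound $\lambda^2/2\le 1/M_0^2$ suffices, which it does with room to spare. Everything else is a direct combination of the two conditional steps via the tower property $\E_{\xi,\pi}=\E_\pi\E_\xi$, which is legitimate because $\xi$ and $\pi$ are independent.
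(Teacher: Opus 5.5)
Your proposal is correct and follows the paper's proof essentially step for step. Like the paper, you condition on $\pi$ and apply $\cosh t\le e^{t^2/2}$, recenter via $\beta_j=b_j^2-1/d$, bound $\|\alpha\|_2^2\le\|a\|_\infty^2$, $\|\beta\|_2^2\le\|b\|_\infty^2$ and $\|\beta\|_\infty\le\|b\|_\infty^2$ (using $\|b\|_\infty^2\ge 1/d$), and invoke \Cref{lem:scalar-comb-bern} with parameter $\lambda^2/2$ to obtain $\frac{\lambda^2}{2d}+\frac{\lambda^4\|a\|_\infty^2\|b\|_\infty^2}{d}\le\frac{3\lambda^2}{2d}$.
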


\begin{proof}
	Set
	\[
	a=A^{\mathsf T}y,
	\qquad
	b=Bx.
	\]
	Since \(A\) and \(B\) are orthogonal, we have that $\norm{a}_2=\norm{b}_2=1$. Observe that
	\[
	\langle y,Ux\rangle
	=
	\sum_{k=1}^d a_k b_{\pi(k)} \xi_k.
	\]
	Conditionally on \(\pi\), independence of the Rademacher variables and the inequality $\cosh t \leq e^{t^2/2}$ give
	\begin{equation}\label{eq:3.1.1}
	\E_\xi \left (
	e^{
	\lambda\langle y,Ux\rangle}\right ) = \prod_{k=1}^d \mathbb{E}_{\xi_k} \left (e^{\lambda a_k b_{\pi(k)} \xi_k} \right ) = \prod_{k=1}^d \cosh(\lambda a_k b_{\pi(k)})
	\le
	\exp \left (\frac{\lambda^2}{2}
	\sum_{k=1}^d a_k^2b_{\pi(k)}^2 \right ).
	\end{equation}
	Define vectors $u, v \in \mathbb{R}^d$ coordinatewise by
	\[
	u_k=a_k^2,
	\qquad
	v_k=b_k^2 - \frac{1}{d}.
	\]
	We notice that 
	\begin{equation}\label{eq:3.1.2}
	\sum_{k=1}^d a_k^2b_{\pi(k)}^2
	=
	\frac1d+
	\sum_{k=1}^d u_kv_{\pi(k)},
	\end{equation}
	 and thus it suffices to analyze the random permuted sum $\sum_k u_k v_{\pi(k)}$, which can be handled by Lemma \ref{lem:scalar-comb-bern}. First, observe that $\sum_k v_k = 0$. Moreover, 
	\[
	\norm{u}_2^2
	=
	\sum_{k=1}^d a_k^4
	\le
	\norm{a}_\infty^2\|a\|_2^2 = \|a\|_\infty^2.
	\]
	A similar argument yields $\|v\|_2^2\leq \|b\|^2_\infty$. Also, since $\|b\|_2^2 = 1$, we have $\|b\|_\infty^2\geq 1/d$, and therefore
	\[
	\norm{u}_\infty
	=
	\norm{a}_\infty^2,
	\qquad
	\norm{v}_\infty
	\leq
	\norm{b}_\infty^2.
	\]
	Applying \Cref{lem:scalar-comb-bern}, together with the estimates above,
	gives
	\[
	\E_\pi
	\exp\left(
	s\sum_{k=1}^d u_kv_{\pi(k)}
	\right)
	\le
	\exp\left(
	\frac{4s^2\norm{a}_\infty^2\norm{b}_\infty^2}{d}
	\right) \qquad \text{for } |s|
	\le
	\frac{1}{\norm{a}_\infty^2\norm{b}_\infty^2}.
	\]
	Therefore, using \eqref{eq:3.1.1} and \eqref{eq:3.1.2}, and taking $s=\lambda^2/2$,
	\[
	\begin{aligned}
		\E_{\pi,\xi}
		e^{\lambda\langle y,Ux\rangle}
		&\le
		\E_\pi
		\exp\left(
		\frac{\lambda^2}{2}
		\sum_{k=1}^d a_k^2b_{\pi(k)}^2
		\right)=
		e^{\lambda^2/(2d)}
		\E_\pi
		\exp\left(
		\frac{\lambda^2}{2}
		\sum_{k=1}^d u_kv_{\pi(k)}
		\right)\\
		&\le
		\exp\left(
		\frac{\lambda^2}{2d}
		+
		\frac{\lambda^4\norm{a}_\infty^2\norm{b}_\infty^2}{d}
		\right)
		\le
		\exp\left(
		\frac{3\lambda^2}{2d}
		\right),
	\end{aligned}
	\]
	for $|\lambda|\leq 1/(\|a\|_\infty \|b\|_\infty)$. Taking logarithms completes the proof.
\end{proof}

Applying \Cref{thm:bilinear-overlap} to the canonical vectors yields
\Cref{cor:coherence-amplification}. We now give its proof.

\begin{proof}[Proof of Corollary \ref{cor:coherence-amplification}]
	Set \(L=\log(d/\eta)\). For fixed \(i,j\in[d]\), we can write
	\[
	U_{ij}
	=
	\langle e_i,Ue_j\rangle.
	\]
	By \Cref{thm:bilinear-overlap,lem:local-mgf},
	\[
	\Pp_{\xi,\pi}\{|U_{ij}|>t\}
	\le
	2\exp\left(
	-\min\left\{
	\frac{dt^2}{6},
	\frac{t}{2\|A\|_\infty \|B\|_\infty}
	\right\}
	\right)
	\qquad\text{for }t\ge0,
	\]
	where we have used that
	\[
	\norm{A^{\mathsf T}e_i}_\infty\le\norm A_\infty,
	\qquad
	\norm{Be_j}_\infty\le\norm B_\infty.
	\]
	Taking $t= 4\sqrt{L/d}$ and using the hypothesis gives
	\[
	\Pp_{\xi,\pi}
	\left\{
	|U_{ij}|>
	4\sqrt{\frac Ld}
	\right\}
	\le
	2e^{-2L}.
	\]
	Finally, a union bound over the \(d^2\) entries of $U$ gives
	\[
	\Pp_{\xi,\pi}
	\left\{
	\norm U_\infty>
	4\sqrt{\frac Ld}
	\right\}
	\le
	2d^2e^{-2L}
	=
	2\eta^2
	\le
	\eta.\qedhere
	\]
\end{proof}

In \Cref{sec:mixers}, we use this result to amplify weak entrywise bounds for ordinary and parallel ORA into nearly optimal coherence.

In the proof of Theorem \ref{thm:finite-JL}, we will need to
control the same random permuted sum uniformly over an \(m\)-dimensional
subspace. This leads to the following matrix-valued analogue. Recall that \( P\in\R^{d\times d}\) is an orthogonal projection
if, and only if,
\[
 P^2= P
\qquad\text{and}\qquad
 P^{\mathsf T}= P.
\]

\begin{lemma}\label{lem:weighted-projection}
	Let \(P\) be a rank-\(m\) orthogonal projection on \(\R^d\), let $\rho=\max_{j\in[d]} P_{jj}$, and let \(q\) be a probability measure on \([d]\). Set
	\[ \sigma^2 = \frac{\rho \|q\|_2^2}{d}, \qquad M=\rho\|q\|_\infty.\]
	If \(\pi\) is uniform on \(S_d\), then
	\[
	\probb{\pi}{\norm{PD_{q_\pi}P - \frac{1}{d}P}\geq t}
	\le
	2m\exp\left(
	-\frac{t^2}
	{12\sigma^2+4\sqrt2\,Mt}
	\right)
	\qquad \text{for }t\geq0.
	\]
\end{lemma}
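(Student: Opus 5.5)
The plan is to reduce the statement to \Cref{lem:mackey-comb} by working inside the range of \(P\). Write \(P=VV^{\mathsf T}\), where \(V\in\R^{d\times m}\) has orthonormal columns, and let \(v_j\in\R^m\) denote the \(j\)-th row of \(V\), so that \(\norm{v_j}_2^2=P_{jj}\le\rho\).

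Since \(q\) is a probability measure, \(\sum_j q_j=1\). Together with \(P^2=P\), this gives
\[
PD_{q_\pi}P-\frac1dP
=
V\left(\sum_{j=1}^d \left(q_{\pi(j)}-\frac1d\right)v_jv_j^{\mathsf T}\right)V^{\mathsf T}.
\]
Indeed, \(V^{\mathsf T}D_{q_\pi}V=\sum_j q_{\pi(j)}v_jv_j^{\mathsf T}\), and \(\frac1d\sum_j v_jv_j^{\mathsf T}=\frac1dV^{\mathsf T}V=\frac1dI_m\). Because \(V\) is an isometry, the operator norm of the left-hand side equals \(\norm{\sum_j b_{\pi(j)}X_j}\), where
\[
b_k=q_k-\frac1d,
\qquad
X_j=v_jv_j^{\mathsf T}.
\]
Here each \(X_j\) is a symmetric \(m\times m\) matrix and \(\sum_k b_k=0\). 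This places us exactly in the setting of \Cref{lem:mackey-comb} with \(r=m\), which produces the prefactor \(2m\).

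It remains to check that the parameters of \Cref{lem:mackey-comb} are bounded by the \(\sigma^2\) and \(M\) of the statement. The only step needing care is the variance proxy.
\begin{itemize}
\item First, \(X_j^2=\norm{v_j}_2^2X_j\preceq\rho X_j\), so \(\sum_jX_j^2\preceq\rho\sum_jX_j=\rho I_m\), and hence \(\norm{\sum_jX_j^2}\le\rho\).
\item Second, \(\norm{b}_2^2=\norm{q}_2^2-\frac1d\le\norm{q}_2^2\).
\item Together these give a variance proxy of at most \(\rho\norm{q}_2^2/d=\sigma^2\).
\item For the uniform bound, \(\norm{X_j}=\norm{v_j}_2^2\le\rho\) and \(\norm{b}_\infty\le\max\{\norm{q}_\infty,1/d\}=\norm{q}_\infty\), using \(\norm{q}_\infty\ge1/d\). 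This gives \(M\) as stated.
\end{itemize}

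Finally, the tail bound in \Cref{lem:mackey-comb} is monotone in its parameters: it increases when \(\sigma^2\) and \(M\) increase. Replacing the lemma's parameters by these upper bounds therefore yields the claimed inequality. The step I would double-check is the first one, namely that \(\sum_jX_j^2\preceq\rho I_m\) loses nothing and that working in \(\R^m\) rather than \(\R^d\) legitimately gives \(r=m\).
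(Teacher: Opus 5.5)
Your proof is correct and is essentially the paper's argument: the same centering \(b=q-\frac1d\mathbf 1\), the same rank-one matrices (the paper uses \(X_j=Pe_je_j^{\mathsf T}P=Vv_jv_j^{\mathsf T}V^{\mathsf T}\)), and the same parameter bounds fed into \Cref{lem:mackey-comb}. Your explicit factorization \(P=VV^{\mathsf T}\) just makes rigorous the paper's one-line remark that the \(X_j\) are supported on the \(m\)-dimensional range of \(P\). That remark is what justifies the prefactor \(2m\) rather than \(2d\), and your isometry argument \(\norm{VMV^{\mathsf T}}=\norm{M}\) settles the point you wanted to double-check.
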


\begin{proof}
	Consider \(b\in\R^d\) and matrices $X_1,\ldots,X_d$  given by
	\[
	b=q-\frac1d\mathbf 1,
	\qquad
	X_j=Pe_je_j^{\mathsf T}P.
	\]
	Since \(P^2=P\) and $\sum_j e_j e_j^{\mathsf T}=I_d$, we have
	\[
	\begin{aligned}
		\sum_{j=1}^d b_{\pi(j)}X_j
		&=
		P\left(
		\sum_{j=1}^d
		\left(q_{\pi(j)}-\frac1d\right)e_je_j^{\mathsf T}
		\right)P=
		PD_{q_\pi}P-\frac1dPI_dP = PD_{q_\pi}P-\frac1dP.
	\end{aligned}
	\]
	Moreover, $\sum_j b_j=0$ and $X_j$ are symmetric. In view of \Cref{lem:mackey-comb}, it remains to estimate the
	corresponding parameters. Since \(q\) is a probability measure,
	\[
	\norm{b}_2^2
	=
	\norm{q}_2^2-\frac1d
	\le
	\norm{q}_2^2,
	\qquad
	\norm{b}_\infty
	\le
	\norm{q}_\infty.
	\]
	Moreover,
	\[
	\norm{X_j}
	=\norm{Pe_j}_2^2
	=e_j^{\mathsf T} P^{\mathsf T} P e_j 
	=e_j^{\mathsf T}Pe_j
	=P_{jj}
	\le\rho.
	\]
	Finally, noting that $X_j^2=P_{jj}X_j$ and that $X_j$ is positive semidefinite, it follows that
	\[
	\sum_{j=1}^dX_j^2
	\preceq
	\rho\sum_{j=1}^dX_j
	=
	\rho P.
	\]
	The parameters in \Cref{lem:mackey-comb} are thus bounded by
	\[
	\frac{\norm{b}_2^2}{d}
	\left\|\sum_{j=1}^dX_j^2\right\|
	\le
	\frac{\norm{q}_2^2}{d}\|\rho P \| = \frac{\norm{q}_2^2}{d}\rho
	=
	\sigma^2
	\]
	and
	\[
	\norm{b}_\infty\max_{j\in[d]}\norm{X_j}
	\le
	\norm{q}_\infty \rho
	=
	M.
	\]
	Since the matrices \(X_j\) are supported on the \(m\)-dimensional range of \(P\), the result follows from \Cref{lem:mackey-comb}.
\end{proof}

\section{Finite-set embeddings from permuted orthogonal products}\label{sec:finite}

Let \(I\subseteq [d]\) with \(|I|=m\), and let
\(P_I\in\mathbb R^{m\times d}\) denote restriction to the coordinates
in \(I\). Let \(A,B\in O(d)\), let
\(\xi,\xi'\in\{-1,1\}^d\) be independent Rademacher vectors, and let
\(\pi\) be an independent uniform permutation on \(S_d\).

Our object of interest is the random embedding $\Phi\colon \mathbb{R}^d \longrightarrow \mathbb{R}^m$ defined by
\begin{equation}\label{eq:Phi-main}
	\Phi
	=
	\sqrt{\frac dm}\,
	P_I A D_\xi P_\pi B D_{\xi'}.
\end{equation}

Recall that $U=AD_\xi P_\pi B$ is the random orthogonal matrix analyzed in Section \ref{sec:bilinear-overlap}. The map $\Phi$ composes $U$ with an independent Rademacher diagonal $D_{\xi'}$, and then restricts to the coordinates in $I$. We will see that the ability of $\Phi$ to preserve Euclidean distances can be captured by the following two local parameters of $A$ and $B$.

\begin{definition}\label{def:local-parameters}
	Let \(I\subseteq [d]\) with \(|I|=m\), let $A,B \in O(d)$, and let \(X\subseteq \mathbb S^{d-1}\). We define the \emph{sampled column energy}
	of \(A\) by
	\begin{equation}\label{eq:kappa}
		\kappa_I(A)
		:=
		\frac dm
		\max_{k\in[d]}
		\sum_{j\in I}A_{jk}^2
	\end{equation}
	and the \emph{maximal variance} of \(B\) on \(X\) by
	\begin{equation}\label{eq:Theta}
		\Theta_B(X)
		:=
		d\sup_{\substack{x\in X\\k\in[d]}}
		\sum_{j=1}^dB_{kj}^2x_j^2.
	\end{equation}
\end{definition}

The parameter \(\kappa_I(A)\) is the largest energy of a column of
\(A\) carried by the selected rows, normalized by its average \(m/d\).
The parameter \(\Theta_B(X)\) controls the largest Rademacher variance
created by the right factor, since
\[
\E_{\xi'}(BD_{\xi'}x)_k^2
=
\sum_{j=1}^dB_{kj}^2x_j^2.
\]
Averaging the quantities in \eqref{eq:kappa} and \eqref{eq:Theta} over
\(k \in [d]\) shows that
\[
\kappa_I(A)\ge1,
\qquad
\Theta_B(X)\ge1.
\]
Moreover,
\[
\kappa_I(A)
\le
d\|A\|_\infty^2,
\qquad
\Theta_B(X)
\le
d\|B\|_\infty^2.
\]
Thus, these parameters may be viewed as local versions of the global
coherence: \(\kappa_I(A)\) depends only on the energy carried by the
selected rows of \(A\), while \(\Theta_B(X)\) measures the largest
variance generated by \(B\) on the prescribed set \(X\).

\begin{proof}[Proof of Theorem \ref{thm:finite-JL}]
	For convenience we write $\kappa=\kappa_I(A)$ and $\Theta=\Theta_B(X)$. Set
	\[
	Q=A^{\mathsf T}P_I^{\mathsf T}P_IA.
	\]
	One can easily see that $Q^{\mathsf{T}}=Q$ and $Q^2=Q$. Thus, \(Q\) is an orthogonal projection of rank \(m\). Also, by the definition of $\kappa$, 
	\begin{equation}\label{eq:Q-diagonal}
		\rho
		:=
		\max_{j\in[d]}Q_{jj}
		=
		\frac md\kappa.
	\end{equation}
	
	\medskip
	\noindent\emph{Step 1: Inner layer flattening.} Fix $x \in X$ and denote
	\[
	z
	=
	BD_{\xi'}x,
	\qquad
	q
	=
	(z_1^2,\ldots,z_d^2).
	\]
	We suppress the dependence of $x$ to improve clarity in the notation. The first step is to show that, with probability at least $1-\eta/4$ over $\xi'$,
	\begin{equation}\label{eq:main1}
		\max_{x \in X} \|q\|_\infty \leq \frac{2\Theta L_d}{d}.
	\end{equation}
	For every \(x\in X\) the coordinates of $z$ are given by
	\[
	z_k= (BD_{\xi'}x)_k
	=
	\sum_{j=1}^d B_{kj}\xi'_jx_j.
	\]
	Thus, they are centered Rademacher sums whose variances are at most
	\(\Theta/d\). By Hoeffding's inequality \cite[Theorem~2.2.5]{Vershynin2026},
	\[
	\Pp_{\xi'}
	\left\{
	|z_k|>t
	\right\}
	\le
	2\exp\left(
	-\frac{dt^2}{2\Theta}
	\right).
	\]
	Taking \(t^2=2\Theta L_d/d\) and applying a union bound over the set $X\times\{1,\ldots,d\}$ yields (\ref{eq:main1}). We denote this event
	\[ E=\left \{\max_X \|q\|_\infty \leq \frac{2\Theta L_d}{d}\right \}. \]
	
	\medskip
	\noindent\emph{Step 2: Operator norm control by random permutation.}
	Condition on the event \(E\). For every \(x\in X\), define
	\[
	y=P_\pi z=z_\pi,
	\qquad
	T_x=\sqrt{\frac dm}\,P_IAD_y.
	\]
	Since \(D_\xi y=D_y\xi\), we have
	\[
	\Phi x=T_x\xi.
	\]
	The second step is to show that, outside an event of probability at
	most \(\eta/4\) over \(\pi\),
	\begin{equation}\label{eq:main2}
		\max_{x\in X}\norm{T_x}^2
		\le
		\frac2m.
	\end{equation}
	We start by noting that
	\begin{align}\label{eq:4.2.1}
		\norm{T_x}^2
		&= \|T_x^{\mathsf T} T_x\| = 
		\frac{d}{m} \|D_y Q D_y\| = \frac{d}{m} \|D_y Q^{\mathsf T} Q D_y\| 
		= \frac{d}{m} \|Q D_{y}D_y Q^{\mathsf T}\| = \frac{d}{m} \|Q D_{q_\pi} Q\|,
	\end{align}
	where we have used that $Q$ is an orthogonal projection, and thus $Q^{\mathsf T}=Q$ and $Q^2=Q$, and the general fact that $\|A^{\mathsf T}A\|=\|AA^{\mathsf T}\|$. Moreover, \(q\) is a probability measure on \([d]\) satisfying
	\[
	\norm{q}_2^2
	\le
	\norm{q}_\infty
	\le
	\frac{2\Theta L_d}{d}
	\]
	for any realization of $\xi'$ belonging to $E$. Hence, the parameters in \Cref{lem:weighted-projection} satisfy
	\[
	\sigma^2
	\le
	\frac{2\rho\Theta L_d}{d^2},
	\qquad
	M
	\le
	\frac{2\rho\Theta L_d}{d}.
	\]
	Applying \Cref{lem:weighted-projection} with \(t=1/d\) gives
	\[
	\Pp_\pi
	\left\{
	\norm{QD_{q_\pi}Q-\frac1dQ}>\frac1d
	\right\}
	\le
	2m\exp\left(
	-\frac{1}{36\rho\Theta L_d}
	\right).
	\]
	Recall that \(\kappa=\frac{d}{m} \rho\). Thus, the admissibility condition (\ref{eq:finite-capacity}) implies
	\[
	36\rho\Theta L_dL_m\leq 1.
	\]
	Consequently, by the triangle inequality,
	\[
	\Pp_\pi
	\left\{
	\norm{QD_{q_\pi}Q}>\frac2d
	\right\}
	\leq
	\Pp_\pi
	\left\{
	\norm{QD_{q_\pi}Q-\frac1dQ}>\frac1d
	\right\}
	\le 2m \exp(-L_m) \leq \frac{\eta}{4N}.
	\]
	In view of (\ref{eq:4.2.1}), a union bound over \(X\) yields (\ref{eq:main2}) with probability at least $1-\eta/4$. We denote this event by
	\[
	F=
	\left\{
	\max_{x\in X}\norm{T_x}^2\le\frac2m
	\right\}.
	\]

	\medskip
	\noindent\emph{Step 3: Control of the conditional mean by random permutation.}
	Condition on the event $E$. The third step is to show that, outside an event of probability at most
	\(\eta/4\) over \(\pi\),
	\begin{equation}\label{eq:main3}
		\max_{x\in X}
		\left|
		\norm{T_x}_F^2-1
		\right|
		\le
		\frac{1}{\sqrt m},
	\end{equation}
	We start by noting that
	\[
	\norm{T_x}_F^2
	=
	\tr(T_x^{\mathsf T}T_x)
	= \frac{d}{m}\tr (D_y Q D_y) = \frac{d}{m} \tr(QD_{y}^2)=
	\frac dm\sum_{k=1}^dQ_{kk}q_{\pi(k)}.
	\]
	This is a random permuted sum that can be studied with Lemma \ref{lem:scalar-comb-bern} after proper centering. Define $a, b\in \mathbb{R}^d$ with coordinates given by
	\[ a_k=\frac{Q_{kk}}{m},\qquad b_k=dq_k-1.\]
	Since $q$ is a probability measure, we have $\sum_k b_k=0$. Furthermore, since $\tr (Q) =m$ we have
	\begin{equation}\label{eq:4.2.3.1}
		\sum_{k=1}^d a_k b_{\pi(k)} = \frac{d}{m} \sum_{k=1}^d Q_{kk} q_{\pi(k)} - \sum_{k=1}^d\frac{Q_{kk}}{m} = \|T_x\|_F^2 - 1.
	\end{equation}
	Next, we estimate the parameters $\sigma^2$ and $M$ appearing in Lemma \ref{lem:scalar-comb-bern}. First, since $Q_{kk}\leq \rho$ and $\rho=\frac{m}{d} \kappa$, we obtain
	\[ \|a\|_2^2 \leq \sum_{k=1}^d \frac{Q_{kk}^2}{m^2} \leq \frac{\rho}{m^2} \tr(Q)=\frac{\rho}{m}=\frac{\kappa}{d}, \qquad \|a\|_\infty\leq \frac{\rho}{m}=\frac{\kappa}{d}. \]
	On the other hand,  since $q$ is a probability measure, conditioning on the event $E$ we have
	\[\|b\|_2^2 \leq d^2\|q\|_2^2 \leq d^2\|q\|_\infty \leq 2d\Theta L_d.\]
	Finally, using that $\Theta\geq1$ and $L_d\geq 1$ we get
	\[ \|b\|_\infty \leq \max \{d\|q\|_\infty,1\} \leq 2\Theta L_d.\]
	In conclusion, the parameters appearing in Lemma \ref{lem:scalar-comb-bern} satisfy
	\[ \sigma^2 \leq \frac{2\kappa \Theta L_d}{d},\qquad M\leq \frac{2\kappa \Theta L_d}{d}.\]
	Moreover, the admissibility condition \eqref{eq:finite-capacity} gives
	\[ \frac{2\kappa \Theta L_d}{d} \leq \frac{1}{18mL_m}.\]
	Hence, by \Cref{lem:scalar-comb-bern,lem:local-mgf}, for every
	\(t\ge0\),
	\[
	\Pp_\pi
	\left\{
	\left|
	\norm{T_x}_F^2-1
	\right|>t
	\right\}
	\le
	2\exp\left(
	-mL_m
	\min\left\{
	\frac98t^2,
	9t
	\right\}
	\right).
	\]
	Taking \(t=1/\sqrt m\) and using \(m\ge1\) gives
	\[
	\Pp_\pi
	\left\{
	\left|
	\norm{T_x}_F^2-1
	\right|>\frac1{\sqrt m}
	\right\}
	\le
	2e^{-L_m}
	\le
	\frac{\eta}{4N}.
	\]
	A union bound over \(X\) therefore yields \eqref{eq:main3} with probability at least $1-\eta/4$ over $\pi$. We denote this event by
	\[
	G=
	\left\{
	\max_{x\in X}
	\left|
	\norm{T_x}_F^2-1
	\right|
	\le
	\frac{1}{\sqrt m}
	\right\}.
	\]

	\medskip
	\noindent\emph{Step 4: Norm concentration via Talagrand's inequality.}
	Condition on the events \(E\), \(F\), and \(G\). The final step is to
	show that, outside an event of probability at most \(\eta/4\) over
	\(\xi\),
	\begin{equation}\label{eq:main4}
		\max_{x\in X}
		\left|
		\norm{\Phi x}_2-1
		\right|
		\le
		10\sqrt{\frac{\ell}{m}}.
	\end{equation}
	
	Recall that \(\Phi x=T_x\xi\). For fixed \(x\in X\), consider the
	convex function
	\[
	f_x(u)=\norm{T_xu}_2.
	\]
	Its Lipschitz constant is \(\norm{T_x}\). Let \(M_x\geq0\) be a median of
	\(f_x(\xi)\). By \Cref{lem:talagrand},
	\[
	\Pp_\xi
	\left\{
	\left|
	\norm{\Phi x}_2-M_x
	\right|\ge t
	\right\}
	\le
	4\exp\left(
	-\frac{t^2}{8\norm{T_x}^2}
	\right)
	\qquad\text{for }t\ge0.
	\]
	Moreover, integrating the preceding tail bound gives
	\[
		\E_\xi
		\left|
		\norm{\Phi x}_2-M_x
		\right|^2
		=
		\int_0^\infty
		2t\,
		\Pp_\xi
		\left\{
		\left|
		\norm{\Phi x}_2-M_x
		\right|\ge t
		\right\}
		\,dt
		\le
		8\int_0^\infty
		t\exp\left(
		-\frac{t^2}{8\norm{T_x}^2}
		\right)\,dt
		=
		32\norm{T_x}^2.
	\]
	On the other hand,
	\[
	\E_\xi\norm{\Phi x}_2^2
	=
	\E_\xi\norm{T_x\xi}_2^2
	=
	\norm{T_x}_F^2.
	\]
	Therefore, by the reverse triangle inequality in \(L^2\),
	\[
		\left|
		M_x-\norm{T_x}_F
		\right|
		=
		\left|
		\norm{M_x}_{L^2(\xi)}
		-
		\norm{\norm{\Phi x}_2}_{L^2(\xi)}
		\right|
		\le
		\left\|
		\norm{\Phi x}_2-M_x
		\right\|_{L^2(\xi)}
		\le
		4\sqrt2\,\norm{T_x}.
	\]
	Since on the event \(F\) we have
	\(\norm{T_x}^2\le2/m\), it follows that
	\[
	\left|
	M_x-\norm{T_x}_F
	\right|
	\le
	4\sqrt2\sqrt{\frac2m}
	=
	\frac8{\sqrt m}.
	\]
	Consequently, for every \(t\ge0\),
	\[
	\Pp_\xi
	\left\{
	\left|
	\norm{\Phi x}_2-\norm{T_x}_F
	\right|
	>
	\frac8{\sqrt m}+t
	\right\}\le
	4\exp\left(
	-\frac{t^2}{8\norm{T_x}^2}
	\right)
	\le
	4e^{-mt^2/16}.
	\]
	Taking $t=2\sqrt{5}\sqrt{\ell/m}$ and using \(\ell\ge\log 16\), we obtain
	\[
	4e^{-20\ell/16}
	\le
	2e^{-\ell}
	=
	\frac{\eta}{4N}.
	\]
	Therefore, a union bound over \(X\), together with \(\ell\ge \log 16\), yields
	\begin{equation}\label{eq:4.2.4.1}
		\max_{x\in X}
		\left|
		\norm{\Phi x}_2-\norm{T_x}_F
		\right|
		\le
		\left (\frac{8}{\sqrt{\log 16}} + 2\sqrt{5}\right )\sqrt{\frac{\ell}{m}}
	\end{equation}
	outside an event of probability at most \(\eta/4\) over \(\xi\).
	
	Recall that on the event \(G\),
	\[
	\left|
	\norm{T_x}_F-1
	\right|
	=
	\frac{
		\left|\norm{T_x}_F^2-1\right|
	}{
		\norm{T_x}_F+1
	}
	\le
	\frac{1}{\sqrt{m}}.
	\]
	Combining this estimate with \eqref{eq:4.2.4.1} and using \(\ell\geq \log 16\),
	we obtain
	\[
	\max_{x\in X}
	\left|
	\norm{\Phi x}_2-1
	\right|
	\le
	\left (\frac{9}{\sqrt{\log 16}} + 2\sqrt{5}\right )\sqrt{\frac{\ell}{m}}
	\leq
	10\sqrt{\frac{\ell}{m}}.
	\]
	This proves \eqref{eq:main4}. Since the exceptional probabilities in
	Steps \(1\)--\(4\) sum to at most \(\eta\), the proof is complete.
\end{proof}

We conclude this section with a sharpening of \Cref{thm:finite-JL} for the
standard basis
\[
X=\{e_1,\ldots,e_d\}.
\]
In this case, for every \(j\in[d]\), the probability vector
\[
q=(Be_j)^2
\]
satisfies \(\norm q_\infty\le\norm B_\infty^2\) deterministically.
Thus, the inner-sign flattening step in the proof of
\Cref{thm:finite-JL} is unnecessary, and the corresponding logarithmic
loss is avoided. Repeating the remaining steps yields the following
amplification principle for the sampled column energy.

\begin{corollary}\label{cor:leverage-amplification}
	Let \(A,B\in O(d)\), let \(I\subseteq [d]\) with \(|I|=m\), let
	\(D_\xi  \in \mathbb{R}^{d\times d}\) be a Rademacher diagonal, let \(\pi\) be an
	independent uniform permutation on \(S_d\), and let \(0<\eta<1/2\). Set
	\[
	U=AD_\xi P_\pi B, \qquad
	\ell=\log\left(\frac{8d}{\eta}\right),
	\qquad
	L_m=\log\left(\frac{8dm}{\eta}\right).
	\]
	Suppose that $m\ge600\ell$ and
	\[
	36\kappa_I(A)\norm B_\infty^2\,mL_m\le1.
	\]
	Then, with probability at least \(1-\eta\),
	\[
	\kappa_I(U)\le2.
	\]
\end{corollary}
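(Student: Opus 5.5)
The plan is to read $\kappa_I(U)$ as a norm-preservation statement for a map of the form \eqref{eq:Phi-main} on the standard basis, and then rerun Steps~2--4 of the proof of \Cref{thm:finite-JL} with the inner flattening step removed. For every $k\in[d]$,
\[
\frac dm\sum_{j\in I}U_{jk}^2=\frac dm\norm{P_IUe_k}_2^2=\norm{\Psi e_k}_2^2,
\qquad
\Psi=\sqrt{\frac dm}\,P_IAD_\xi P_\pi B .
\]
Hence $\kappa_I(U)=\max_k\norm{\Psi e_k}_2^2$. No outer sign vector $\xi'$ is needed, since $D_{\xi'}e_k=\pm e_k$ does not change the norm. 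It therefore suffices to show that, with probability at least $1-\eta$, $\max_k\norm{\Psi e_k}_2\le 1+10\sqrt{\ell/m}$. The hypothesis $m\ge600\ell$ then gives $10\sqrt{\ell/m}\le 10/\sqrt{600}<0.41$, so $\kappa_I(U)\le(1.41)^2<2$.

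Fix $k$ and set $z=Be_k$ and $q=(z_j^2)_j$. Then $q$ is a probability vector with $\norm q_\infty\le\norm B_\infty^2$ deterministically, so the event $E$ of Step~1 is replaced by this deterministic bound: $2\Theta L_d/d$ becomes $\norm B_\infty^2$. As before, set $Q=A^{\mathsf T}P_I^{\mathsf T}P_IA$, $\rho=\tfrac md\kappa_I(A)$ and $T_k=\sqrt{d/m}\,P_IAD_{z_\pi}$, so that $\Psi e_k=T_k\xi$.

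\emph{Operator norm (Step~2).} Apply \Cref{lem:weighted-projection} with $\sigma^2\le\rho\norm B_\infty^2/d$, $M\le\rho\norm B_\infty^2$ and $t=1/d$. The exponent is at least $1/\bigl((12+4\sqrt2)\rho d\norm B_\infty^2\bigr)\ge 1/(18\kappa_I(A)m\norm B_\infty^2)$. By the hypothesis $36\kappa_I(A)\norm B_\infty^2mL_m\le1$ this is at least $L_m$. This yields $\norm{T_k}^2\le 2/m$ for all $k$ after a union bound over $d$ indices, with failure probability $2md e^{-L_m}\le\eta/4$.

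\emph{Frobenius mean (Step~3).} Use the same centering as before, $a_k=Q_{kk}/m$ and $b=dq-\mathbf 1$. Now $\norm b_2^2\le d^2\norm B_\infty^2$ and $\norm b_\infty\le d\norm B_\infty^2$ (note $d\norm B_\infty^2\ge1$). Thus the parameters in \Cref{lem:scalar-comb-bern} are at most $\kappa_I(A)\norm B_\infty^2\le 1/(36mL_m)$. \Cref{lem:local-mgf} with $t=1/\sqrt m$ then gives $|\norm{T_k}_F^2-1|\le 1/\sqrt m$ for all $k$, with failure probability at most $\eta/4$.

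\emph{Talagrand (Step~4).} This step is copied verbatim from the proof of \Cref{thm:finite-JL}, with $N=d$ so that $\ell=\log(8d/\eta)$. It gives $\max_k|\norm{\Psi e_k}_2-1|\le10\sqrt{\ell/m}$ outside probability $\eta/4$.

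The only real work is the parameter bookkeeping in Steps~2 and~3. The aim there is to check that the single hypothesis $36\kappa_I(A)\norm B_\infty^2mL_m\le1$ makes both Bernstein exponents exceed $L_m$. This works because it is exactly \eqref{eq:finite-capacity} with $\Theta L_d/d$ replaced by $\norm B_\infty^2$. Since the constant $36$ leaves slack relative to the required $18$, I do not expect this to be a genuine obstacle. The total failure probability is at most $3\eta/4\le\eta$.
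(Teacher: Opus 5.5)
Your proposal is correct and follows the paper's own proof essentially verbatim. You use the deterministic bound $\norm{q}_\infty\le\norm B_\infty^2$ for $q=(Be_k)^2$ in place of the flattening event of Step~1, rerun Steps~2--4 of the proof of \Cref{thm:finite-JL} over the standard basis with $N=d$ (total failure probability $3\eta/4$), and conclude via $10/\sqrt{600}<\sqrt2-1$; your explicit bookkeeping, with exponents at least $2L_m$ in Step~2 and $\tfrac94 L_m$ in Step~3, fills in details the paper leaves implicit.
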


\begin{proof}
	Apply Steps \(2\)--\(4\) of the proof of
	\Cref{thm:finite-JL} to the standard basis
	\(\{e_1,\ldots,e_d\}\). For each \(j\in[d]\),
	\[
	q=(Be_j)^2
	\qquad\text{satisfies}\qquad
	\norm q_\infty\le\norm B_\infty^2,
	\]
	so Step \(1\) is unnecessary. Repeating Steps \(2\) and \(3\) with
	this deterministic bound, followed by Step \(4\) and a union bound
	over \(j\in[d]\), gives
	\[
	\max_{j\in[d]}
	\left|
	\sqrt{\frac dm}\norm{P_IUe_j}_2-1
	\right|
	\le
	10\sqrt{\frac{\ell}{m}}
	\]
	outside an event of probability at most \(3\eta/4\).
	Since \(m\ge600\ell\),
	\[
	10\sqrt{\frac{\ell}{m}}
	\le\frac{10}{\sqrt{600}}
	<\sqrt2-1.
	\]
	Hence
	\[
	\max_{j\in[d]}
	\frac dm\norm{P_IUe_j}_2^2
	\le2.
	\]
	By the definition of \(\kappa_I(U)\), this is precisely the desired
	conclusion.
\end{proof}

\section{Extensions to structured infinite sets}
\label{sec:infinite-sets}

The goal of this section is to extend \Cref{thm:finite-JL} from finite
sets to several infinite models of interest, including sparse vectors,
finite unions of subspaces, and low-rank matrices. The strategy is to find a finite
family of directions such that controlling every quadratic form
\(x\mapsto\langle Hx,x\rangle\) on this family controls it uniformly on
the entire model. Arguments of this type are implicit in the standard
net proofs for sparse and low-rank restricted isometries
\cite{BaraniukEtAl2008,RechtFazelParrilo2010}, while norming sets are
classical objects in approximation theory \cite{BrudnyiYomdin2015}.
More recently, polynomial norming sets were used in
\cite{ZhangKileel2025} to transfer finite-point sketching guarantees to algebraic
varieties and polynomial images.

\subsection{Quadratic norming families}

\begin{definition}\label{def:quadratic-norming}
	Let \(X\subseteq \mathbb S^{d-1}\) and $\lambda\geq1$. A finite set
	\(W\subseteq \mathbb S^{d-1}\) is a
	\emph{$\lambda$-quadratic norming family} for \(X\) if, for every
	symmetric matrix \(H\in\R^{d\times d}\),
	\[
	\sup_{x\in X}|\langle Hx,x\rangle|
	\le
	\lambda
	\sup_{w\in W}|\langle Hw,w\rangle|.
	\]
\end{definition}

By a simple convexity argument, one can see that this is equivalent to the inclusion
\[
\{xx^{\mathsf T}:x\in X\}
\subseteq 
\lambda\,
\operatorname{co}
\left(
\pm\{ww^{\mathsf T}:w\in W\}
\right),
\]
where \(\operatorname{co}\) denotes the convex hull. The following proposition makes the transfer principle explicit: if
\(W\) is a \(\lambda\)-quadratic norming family for \(X\), then uniform
control of the squared-norm distortion on \(W\) extends to \(X\), at
the cost of a factor \(\lambda\).

\begin{proposition}\label{prop:norming-transfer}
	Let $W\subseteq \mathbb{S}^{d-1}$ be a
	finite \(\lambda\)-quadratic norming family for a set \(X\subseteq \mathbb{S}^{d-1}\). Then, for
	every linear map \(\Phi:\R^d\to\R^m\),
	\[
	\sup_{x\in X}
	\left|
	\norm{\Phi x}_2^2-1
	\right|
	\le
	\lambda
	\sup_{w\in W}
	\left|
	\norm{\Phi w}_2^2-1
	\right|.
	\]
\end{proposition}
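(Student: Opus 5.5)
The plan is to apply \Cref{def:quadratic-norming} to a single, well-chosen symmetric matrix. Given a linear map \(\Phi:\R^d\to\R^m\), I would set
\[
H=\Phi^{\mathsf T}\Phi-I_d,
\]
which is symmetric. The point of this choice is that, for every unit vector \(v\in\mathbb S^{d-1}\),
\[
\langle Hv,v\rangle
=\langle \Phi^{\mathsf T}\Phi v,v\rangle-\langle v,v\rangle
=\norm{\Phi v}_2^2-\norm{v}_2^2
=\norm{\Phi v}_2^2-1 .
\]

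Both \(X\) and \(W\) lie in \(\mathbb S^{d-1}\), so this identity holds for every \(x\in X\) and every \(w\in W\). Taking absolute values and suprema therefore gives
\[
\sup_{x\in X}\bigl|\norm{\Phi x}_2^2-1\bigr|=\sup_{x\in X}|\langle Hx,x\rangle|
\]
and
\[
\sup_{w\in W}\bigl|\norm{\Phi w}_2^2-1\bigr|=\sup_{w\in W}|\langle Hw,w\rangle|.
\]

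The defining inequality of a \(\lambda\)-quadratic norming family, applied to this particular \(H\), now gives exactly the claimed bound.

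There is no real obstacle here. The only point that needs care is that the normalization \(\norm{v}_2=1\) is used on both sides, which is why both \(X\) and \(W\) are required to be subsets of the unit sphere.
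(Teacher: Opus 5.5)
Your proof is correct and is exactly the paper's argument: set $H=\Phi^{\mathsf T}\Phi-I_d$ and note that $\langle Hv,v\rangle=\norm{\Phi v}_2^2-1$ for unit vectors $v$. Applying the $\lambda$-quadratic norming inequality to this single symmetric matrix then gives the claim.
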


\begin{proof}
	Set \(H=\Phi^{\mathsf T}\Phi-I_d\). By the quadratic norming
	property,
	\[
	\begin{aligned}
		\sup_{x\in X}
		\left|
		\norm{\Phi x}_2^2-1
		\right|
		&=
		\sup_{x\in X}
		|\langle Hx,x\rangle|
		\le
		\lambda
		\sup_{w\in W}
		|\langle Hw,w\rangle|
		=
		\lambda
		\sup_{w\in W}
		\left|
		\norm{\Phi w}_2^2-1
		\right|.
	\end{aligned}\qedhere
	\]
\end{proof}

In other words, any linear map that is a Johnson--Lindenstrauss
embedding for \(W\) is automatically one for \(X\), with distortion
increased by at most the factor \(\lambda\).

Although elementary, this transfer principle yields consequences for a
variety of infinite models. The next step, and the purpose of the following subsection, is to identify conditions under which a finite quadratic norming family exists.

\subsection{Secant-generating models}

Let \(\mathcal C\subseteq \R^d\) be a nonzero balanced cone, that is,
\(tx\in\mathcal C\) whenever \(x\in\mathcal C\) and \(t\in\R\). Set
\(S_{\mathcal C}:=\mathcal C\cap\mathbb S^{d-1}\), and define its
normalized secant set by
\[
\seca(S_{\mathcal C})
:=
\left\{
\frac{x-y}{\norm{x-y}_2}:
x,y\in S_{\mathcal C},\ x\neq y
\right\}.
\]

\begin{definition}\label{def:secant-generating}
	Let \( \lambda \ge1\). We say that \(\mathcal C\) is
	\emph{\(\lambda\)-secant-generating} if, for every
	\(x,y\in S_{\mathcal C}\), there exist \(s\in\mathbb N\) and
	\(h_1,\ldots,h_s\in\mathcal C\) such that
	\[
	x-y
	=
	\sum_{j=1}^s h_j
	\qquad\text{and}\qquad
	\sum_{j=1}^s\norm{h_j}_2
	\le
	 \lambda \norm{x-y}_2.
	\]
	Equivalently,
	\[
	\seca (S_{\mathcal C})
	\subseteq 
	\lambda \operatorname{co}(S_{\mathcal C}).
	\]
\end{definition}

For \(X\subseteq \mathbb S^{d-1}\) and \(\delta>0\), we denote by
\[N(X,\delta)\] 
the smallest cardinality of a Euclidean
\(\delta\)-net of \(X\).

The next result shows that, for a secant-generating model, a net can be converted into a finite quadratic norming family of comparable logarithmic size.

\begin{proposition}\label{prop:secant-norming}
	Let \(\mathcal C\subseteq \R^d\) be a
	\(\lambda\)-secant-generating balanced cone. Then
	\(S_{\mathcal C}=\mathcal C\cap \mathbb{S}^{d-1}\) admits a \(2\)-quadratic norming family
	\(W\subseteq \seca (S_{\mathcal C})\) satisfying
	\[
	|W|
	\le
	2N\left(S_{\mathcal C},\frac1{4\lambda}\right)^2.
	\]
\end{proposition}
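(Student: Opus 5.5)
The plan is to build $W$ from a net of $S_{\mathcal C}$ by polarization, and to control the full bilinear form of $H$ on $S_{\mathcal C}\times S_{\mathcal C}$ rather than only the quadratic form; this is what lets the secant-generating property close a self-improving inequality.

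\emph{Construction.} Set $\delta=1/(4\lambda)$ and let $Z\subseteq S_{\mathcal C}$ be a $\delta$-net with $|Z|=N(S_{\mathcal C},\delta)$. If the net in the definition of $N(\cdot,\delta)$ is allowed to lie outside $X$, we first replace each net point by a nearby point of $S_{\mathcal C}$; this costs at most a factor $2$ in $\delta$, which we absorb by adjusting constants. Define
\[
W=\left\{\frac{z+z'}{\norm{z+z'}_2},\ \frac{z-z'}{\norm{z-z'}_2}:\ z,z'\in Z\right\},
\]
where we discard any zero vectors. Then $|W|\le 2|Z|^2$. Since $\mathcal C$ is balanced, $-z'\in S_{\mathcal C}$, so both $z-z'$ and $z+z'=z-(-z')$ are differences of points of $S_{\mathcal C}$. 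Hence $W\subseteq \seca(S_{\mathcal C})$.

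\emph{Bilinear control.} Fix a symmetric $H$ and set $\gamma=\sup_{w\in W}|\langle Hw,w\rangle|$ and $\beta=\sup_{u,v\in S_{\mathcal C}}|\langle Hu,v\rangle|$. Clearly $\sup_{x\in S_{\mathcal C}}|\langle Hx,x\rangle|\le\beta$, so it suffices to prove $\beta\le 2\gamma$.
\begin{enumerate}
\item \emph{Net points.} For $z,z'\in Z$, polarization gives
\[
4\langle Hz,z'\rangle=\norm{z+z'}_2^2\langle Hw_+,w_+\rangle-\norm{z-z'}_2^2\langle Hw_-,w_-\rangle,
\]
where $w_\pm\in W$ are the normalized vectors $z\pm z'$. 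A vanishing difference simply drops its term. Since $\norm{z+z'}_2^2+\norm{z-z'}_2^2=4$, this yields $|\langle Hz,z'\rangle|\le\gamma$.
\item \emph{Approximation.} For $u,v\in S_{\mathcal C}$, choose $z,z'\in Z$ with $\norm{u-z}_2,\norm{v-z'}_2\le\delta$, and write
\[
\langle Hu,v\rangle-\langle Hz,z'\rangle=\langle H(u-z),v\rangle+\langle Hz,v-z'\rangle .
\]
By \Cref{def:secant-generating}, $u-z=\sum_j h_j$ with $h_j\in\mathcal C$ and $\sum_j\norm{h_j}_2\le\lambda\norm{u-z}_2\le\lambda\delta$. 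Bilinearity together with the scaling $h_j=\norm{h_j}_2\cdot(h_j/\norm{h_j}_2)$, where $h_j/\norm{h_j}_2\in S_{\mathcal C}$, gives $|\langle H(u-z),v\rangle|\le\lambda\delta\beta$. The second term is bounded the same way.
\item \emph{Closing the inequality.} Combining the two steps gives $\beta\le\gamma+2\lambda\delta\beta=\gamma+\beta/2$. Since $\beta\le\norm H<\infty$, we may rearrange to obtain $\beta\le2\gamma$, which is the $2$-quadratic norming property.
\end{enumerate}

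The main obstacle is the approximation step. Working only with quadratic forms fails: the error $\langle H(x-z),x+z\rangle$ involves $x+z$, which the secant hypothesis controls only up to a factor $\lambda$, so one would pick up a $\lambda^2\delta$ loss and the inequality would not close with $\delta=1/(4\lambda)$. Passing to the bilinear supremum $\beta$ makes each error term cost exactly $\lambda\delta\beta$, and this is what makes the choice $\delta=1/(4\lambda)$ land on the constant $2$.
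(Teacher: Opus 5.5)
Your proof is correct and follows the paper's argument essentially step for step. The shared steps are: the same family $W$ of normalized sums and differences of points of a minimal $(4\lambda)^{-1}$-net taken inside $S_{\mathcal C}$; the same polarization bound on pairs of net points; and the same passage to the bilinear supremum $\sup_{x,y\in S_{\mathcal C}}|\langle Hx,y\rangle|$, which the secant-generating property turns into the self-improving inequality $\beta\le\gamma+\beta/2$. The paper simply takes the net inside $S_{\mathcal C}$, so your hedge about external nets and adjusting constants is not needed.
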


\begin{proof}
	Let \(\mathcal N\subseteq S_{\mathcal C}\) be a \((4\lambda)^{-1}\)-net of
	\(S_{\mathcal C}\) with minimal cardinality. Define the family
	\[
	W
	=
	\left\{
	\frac{u+v}{\norm{u+v}_2}:
	u,v\in\mathcal N,\ u\neq -v
	\right\}
	\cup
	\left\{
	\frac{u-v}{\norm{u-v}_2}:
	u,v\in\mathcal N,\ u\neq v
	\right\}.
	\]
	Since \(\mathcal C\) is balanced,
	\(u+v=u-(-v)\), and hence
	\(W\subseteq \seca (S_{\mathcal C})\). Moreover,
	\[
	|W|\le2|\mathcal N|^2.
	\]
	We will show that $W$ is \(2\)-quadratic norming for $S_{\mathcal C}$. Let \(H\) be symmetric and set
	\[
	\alpha
	=
	\sup_{w\in W}|\langle Hw,w\rangle|.
	\]
	For \(u,v\in\mathcal N\), polarization gives
	\[
	4\langle Hu,v\rangle
	=
	\langle H(u+v),u+v\rangle
	-
	\langle H(u-v),u-v\rangle.
	\]
	It follows that
	\begin{equation}\label{eq:5.1}
	|\langle Hu,v\rangle|
	\le
	\frac{\alpha}{4}
	\left(
	\norm{u+v}_2^2+\norm{u-v}_2^2
	\right)
	= \frac{\alpha}{2} (\|u\|_2^2 + \|v\|_2^2)=
	\alpha.
	\end{equation}
	Thus, control of quadratic forms in $W$ gives control of bilinear forms in $\mathcal{N}$. Set
	\[
	M
	=
	\sup_{x,y\in S_{\mathcal C}}
	|\langle Hx,y\rangle|.
	\]
	Fix \(x,y\in S_{\mathcal C}\), and choose \(u,v\in\mathcal N\)
	such that
	\[
	\norm{x-u}_2,\norm{y-v}_2
	\le
	\frac1{4\lambda}.
	\]
	By the triangle inequality,
	\begin{equation}\label{eq:5.2}
	|\langle Hx,y\rangle|
	\le
	|\langle Hu,v\rangle|
	+
	|\langle H(x-u),y\rangle|
	+
	|\langle Hu,y-v\rangle|
	\end{equation}
	The first term in the right hand side is bounded in (\ref{eq:5.1}) by $\alpha$. Also, since \(\mathcal C\) is \(\lambda\)-secant-generating, there are $h_1,\ldots,h_s \in \mathcal{C}$ such that
	\[
	|\langle H(x-u),y\rangle| \leq  \sum_{j=1}^s |\langle Hh_j,y \rangle | \leq M \sum_{j=1}^s \|h_j\|_2
	\le
	\lambda M\norm{x-u}_2
	\le
	\frac14M.
	\]
	Similarly
	\[
	|\langle Hu,y-v\rangle|
	\le
	\frac14M.
	\]
	Therefore, from (\ref{eq:5.2}) we deduce that
	\[
	|\langle Hx,y\rangle|
	\le
	\alpha+\frac12M.
	\]
	Taking the supremum gives \(M\le2\alpha\). In particular,
	\[
	\sup_{x\in S_{\mathcal C}}|\langle Hx,x\rangle|
	\le
	2\sup_{w\in W}|\langle Hw,w\rangle|,
	\]
	which completes the proof.
\end{proof}

Recall that the Gaussian width of \(X\subseteq\mathbb S^{d-1}\) is
\[
w(X)
=
\E\sup_{x\in X}\langle g,x\rangle,
\]
where $g\sim N(0,I_d)$ is a standard Gaussian vector in $\mathbb{R}^d$. By Sudakov's inequality \cite[Corollary~7.4.2]{Vershynin2026},
\[
\log N(X,\delta)
\le
C\delta^{-2}w(X)^2.
\]
Together with \Cref{prop:secant-norming}, this yields a finite
\(2\)-quadratic norming family
\(W\subseteq\seca(S_{\mathcal C})\) with
\[
\log|W|
\le
C\lambda^2w(S_{\mathcal C})^2.
\]
Thus, \Cref{thm:finite-JL} and \Cref{prop:norming-transfer} allow us
to pass from the finite family \(W\) to the full model
\(S_{\mathcal C}\).

\begin{corollary}\label{cor:gaussian-width}
	Let \(\mathcal C\subseteq\R^d\) be a
	\(\lambda\)-secant-generating balanced cone, let
	\(0<\eps<1\) and \(0<\eta<1/2\), and set
	\[
	a_{\mathcal C}
	=
	C_0\lambda^2w(S_{\mathcal C})^2
	+
	\log\left(\frac{16}{\eta}\right),
	\]
	where \(C_0>0\) is a universal constant. Assume that $m\geq2025\eps^{-2}a_{\mathcal C}$ and
	\[
	36\kappa_I(A)\Theta_B(\seca(S_{\mathcal C}))\,m
	\left(a_{\mathcal C}+\log d\right)
	\left(a_{\mathcal C}+\log m\right)
	\le d.
	\]
	Then, with probability at least \(1-\eta\),
	\[
	\left|
	\norm{\Phi x}_2^2-\norm{x}_2^2
	\right|
	\le
	\eps\norm{x}_2^2
	\qquad
	\text{for every }x\in\mathcal C.
	\]
\end{corollary}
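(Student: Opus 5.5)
The plan is to combine \Cref{prop:secant-norming}, Sudakov's inequality, \Cref{thm:finite-JL} applied to the finite family, and \Cref{prop:norming-transfer}.

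First, since $\mathcal C$ is $\lambda$-secant-generating, \Cref{prop:secant-norming} gives a $2$-quadratic norming family $W\subseteq\seca(S_{\mathcal C})$ with $|W|\le 2N(S_{\mathcal C},(4\lambda)^{-1})^2$. Sudakov's inequality with $\delta=(4\lambda)^{-1}$ then bounds $\log N(S_{\mathcal C},\delta)\le 16C\lambda^2 w(S_{\mathcal C})^2$. Choosing $C_0$ large enough, we get
\[
\log(8|W|/\eta)\le \log 16+32C\lambda^2w(S_{\mathcal C})^2+\log(1/\eta)\le a_{\mathcal C}.
\]
Here $\log(16/\eta)$ absorbs the constants coming from $8$ and the factor $2$.

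Next, apply \Cref{thm:finite-JL} with $X=W$ and $N=|W|$. In its notation, $\ell\le a_{\mathcal C}$, $L_d\le a_{\mathcal C}+\log d$ and $L_m\le a_{\mathcal C}+\log m$. Since $W\subseteq\seca(S_{\mathcal C})$, the supremum defining $\Theta_B$ only increases when passing to the larger set, so $\Theta_B(W)\le\Theta_B(\seca(S_{\mathcal C}))$. The assumed inequality therefore implies the admissibility condition \eqref{eq:finite-capacity}. With probability at least $1-\eta$, every $w\in W$ then satisfies $|\norm{\Phi w}_2-1|\le 10\sqrt{\ell/m}\le 10\sqrt{a_{\mathcal C}/m}$. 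The hypothesis $m\ge2025\eps^{-2}a_{\mathcal C}$ gives $10\sqrt{a_{\mathcal C}/m}\le \tfrac{10}{45}\eps=\tfrac{2}{9}\eps=:\tau$.

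Then convert to squared norms: $|\norm{\Phi w}_2^2-1|=|\norm{\Phi w}_2-1|\,(\norm{\Phi w}_2+1)\le\tau(2+\tau)$. Since $\eps<1$ we have $\tau\le 2/9$, so this is at most $\tfrac{2}{9}\eps\cdot\tfrac{20}{9}=\tfrac{40}{81}\eps<\eps/2$. By \Cref{prop:norming-transfer} with $\lambda=2$, it follows that $\sup_{x\in S_{\mathcal C}}|\norm{\Phi x}_2^2-1|<\eps$. Homogeneity over the cone (for $x\neq0$, $x/\norm{x}_2\in S_{\mathcal C}$) gives the stated bound for all $x\in\mathcal C$.

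The main obstacle is purely bookkeeping. One must fix $C_0$ relative to Sudakov's constant so that $\log(8|W|/\eta)$ is dominated by $a_{\mathcal C}$, and confirm that the constant $2025=45^2$ leaves enough room after squaring and after the factor $2$ lost in the norming transfer; the computation above suggests it does. One should also note the mild point that \Cref{thm:finite-JL} requires $W\subseteq\mathbb S^{d-1}$, which holds because the secants are normalized.
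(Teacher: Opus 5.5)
Your proposal is correct and follows essentially the same route as the paper. Both arguments build the $2$-quadratic norming family $W\subseteq\seca(S_{\mathcal C})$ from \Cref{prop:secant-norming} and Sudakov, bound $\log(8|W|/\eta)\le a_{\mathcal C}$ and $\Theta_B(W)\le\Theta_B(\seca(S_{\mathcal C}))$, and apply \Cref{thm:finite-JL} to obtain the $2\eps/9$ bound. They then square it to $40\eps/81$ and conclude via \Cref{prop:norming-transfer} and homogeneity, and your bookkeeping (e.g.\ $C_0\ge 32C$, $L_d\le a_{\mathcal C}+\log d$) makes explicit some constants the paper leaves implicit.
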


\begin{proof}
	By Sudakov's inequality and \Cref{prop:secant-norming}, there is a
	\(2\)-quadratic norming family
	\(W\subseteq \seca(S_{\mathcal C})\) such that
	\[
	\log\left(\frac{8|W|}{\eta}\right)
	\le a_{\mathcal C}.
	\]
	Moreover,
	\(\Theta_B(W)\le\Theta_B(\seca(S_{\mathcal C}))\).
	Hence, \Cref{thm:finite-JL} gives
	\[
	\sup_{w\in W}
	\left|\norm{\Phi w}_2-1\right|
	\le
	10\sqrt{\frac{a_{\mathcal C}}m}
	\le\frac{2\eps}{9}.
	\]
	Thus,
	\[
	\sup_{w\in W}
	\left|\norm{\Phi w}_2^2-1\right| = \sup_{w\in W} \left |\norm{\Phi w}_2-1\right| \left |\norm{\Phi w}_2+1\right|
	\leq \left (2+\frac{2\varepsilon}{9}\right ) \sup_{w\in W} \left |\norm{\Phi w}_2-1\right|
	\le\frac{40}{81}\eps.
	\]
	The conclusion follows from \Cref{prop:norming-transfer} and
	homogeneity.
\end{proof}

In particular, when \(\lambda\) is universally bounded and \(\eta\) is
fixed, the number of rows can be taken to be optimal:
\[
m
\asymp
\eps^{-2}w(S_{\mathcal C})^2.
\]
This, of course, is subject to the admissibility condition above.

\subsection{Applications}

The preceding results reduce the extension to an infinite model to a
geometric problem: constructing a finite quadratic norming family and
estimating its cardinality. We now carry out this program for
several model classes of interest.

\medskip
\noindent\emph{Sparse vectors and the restricted isometry property.}
For \(1\le s\le d\), consider the set of $s$-sparse vectors
\[
\Sigma_s
:=
\left\{
x\in\R^d:\norm{x}_0\le s
\right\}.
\]
This set is clearly a balanced cone. Moreover, it is \(\sqrt2\)-secant-generating. Indeed, if
\(x,y\in S_{\Sigma_s}\), then \(x-y\) is \(2s\)-sparse. Splitting its
support into two sets of cardinality at most \(s\), we may write
\[
x-y=h_1+h_2,
\qquad
h_1,h_2\in\Sigma_s,
\qquad
\langle h_1,h_2\rangle=0,
\]
and thus
\[
\norm{h_1}_2+\norm{h_2}_2
\le
\sqrt2\norm{x-y}_2.
\]
We may therefore apply Corollary \ref{cor:gaussian-width} once we estimate the Gaussian width of $S_{\Sigma_s}$ and the maximal variance on its normalized secants. The standard Gaussian-width estimate
\[
w(S_{\Sigma_s})^2
\asymp
s\log\left(\frac{ed}{s}\right)
\]
is classical; see, for example, \cite[Exercise~9.27]{Vershynin2026}. Moreover,
\[
\Theta_B(\seca (S_{\Sigma_s}))
=
d\|B\|_\infty^2.
\]
Indeed, the upper bound follows from the definition, while
the reverse inequality follows because every coordinate vector belongs
to \(\seca (S_{\Sigma_s})\). 
Recall that a linear map \(\Phi:\R^d\to\R^m\) has restricted isometry
constant of order \(s\) given by
\[
\delta_s(\Phi)
=
\sup_{x\in S_{\Sigma_s}}
\left|
\norm{\Phi x}_2^2-1
\right|.
\]
Corollary \ref{cor:gaussian-width} and the above estimates yield the following.

\begin{corollary}\label{cor:sparse-RIP}
	Let \(0<\eps<1\) and \(0<\eta<1/2\), and set
	\[
	a_s
	=
	Cs\log\left(\frac{ed}{s}\right)
	+
	\log\left(\frac{16}{\eta}\right),
	\]
	where \(C>0\) is a universal constant. Assume that $m\ge2025\eps^{-2}a_s$ and
	\[
	144\kappa_I(A)\norm{B}_\infty^2\,ma_s^2
	\le1.
	\]
	Then, with probability at least \(1-\eta\), the linear map
	\(\Phi:\mathbb R^d\to\mathbb R^m\) defined in
	\eqref{eq:Phi-main} satisfies
	\[
	\delta_s(\Phi)\le\eps.
	\]
\end{corollary}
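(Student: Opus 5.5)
The plan is to deduce Corollary~\ref{cor:sparse-RIP} directly from \Cref{cor:gaussian-width} applied to the balanced cone \(\mathcal C=\Sigma_s\), using the facts established just above: \(\Sigma_s\) is \(\sqrt2\)-secant-generating, \(w(S_{\Sigma_s})^2\asymp s\log(ed/s)\), and \(\Theta_B(\seca(S_{\Sigma_s}))=d\norm{B}_\infty^2\). The conclusion of \Cref{cor:gaussian-width} for \(x\in S_{\Sigma_s}\) is exactly \(\delta_s(\Phi)\le\eps\), so everything reduces to checking that the hypotheses of \Cref{cor:sparse-RIP} imply those of \Cref{cor:gaussian-width}.

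First, I will choose the constant. With \(\lambda=\sqrt2\), we have \(C_0\lambda^2w(S_{\Sigma_s})^2=2C_0w(S_{\Sigma_s})^2\le Cs\log(ed/s)\) for a suitable universal \(C\), by the classical Gaussian width bound. Hence \(a_{\mathcal C}\le a_s\). Since \(a_{\mathcal C}\) enters \Cref{cor:gaussian-width} only monotonically, through the lower bound on \(m\) and the admissibility condition, I may replace \(a_{\mathcal C}\) by \(a_s\) throughout. This is legitimate because the proof of \Cref{cor:gaussian-width} only uses an upper bound \(\log(8|W|/\eta)\le a_{\mathcal C}\), and \(a_s\) is also such an upper bound. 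The row condition \(m\ge2025\eps^{-2}a_s\) then matches the hypothesis verbatim.

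Second, I will verify the admissibility condition, which is the only real computation. We need
\[
36\kappa_I(A)\,d\norm{B}_\infty^2\,m\,(a_s+\log d)(a_s+\log m)\le d.
\]
Because \(s\ge1\), we have \(s\log(ed/s)\ge\log d\) when \(s=1\). More generally, \(s\log(ed/s)\ge \log(ed)\ge\log d\) for \(1\le s\le d\), since \(t\mapsto t\log(ed/t)\) is increasing on \([1,d]\). Taking \(C\ge1\) therefore gives \(\log d\le a_s\), and also \(\log m\le\log d\le a_s\). Hence
\[
(a_s+\log d)(a_s+\log m)\le 4a_s^2,
\]
and the condition follows from \(144\kappa_I(A)\norm{B}_\infty^2\,m\,a_s^2\le1\) after dividing by \(d\).

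The only mild obstacle is the bookkeeping in the first step. One has to make sure that enlarging \(a_{\mathcal C}\) to \(a_s\) is harmless, and that the universal constant \(C\) in \(a_s\) simultaneously absorbs \(2C_0\) times the implicit constant in the width estimate and satisfies \(C\ge1\), so that \(\log d\le a_s\). Once those points are settled, \Cref{cor:gaussian-width} yields \(|\norm{\Phi x}_2^2-1|\le\eps\) for all \(x\in S_{\Sigma_s}\) with probability at least \(1-\eta\), which is precisely \(\delta_s(\Phi)\le\eps\).
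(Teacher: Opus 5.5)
Your proposal is correct and follows the paper's own proof. Both apply \Cref{cor:gaussian-width} to \(\Sigma_s\) with \(\lambda=\sqrt2\), use \(\Theta_B(\seca(S_{\Sigma_s}))=d\norm{B}_\infty^2\), and use \(a_s\ge\log d\ge\log m\) to get \((a_s+\log d)(a_s+\log m)\le 4a_s^2\). Replacing \(a_{\mathcal C}\) by the larger \(a_s\) is also justified more simply than you argue: both hypotheses of \Cref{cor:gaussian-width} are monotone in \(a_{\mathcal C}\), so they hold as stated once \(a_{\mathcal C}\le a_s\).
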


\begin{proof}
	The result follows from the preceding observations and
	\Cref{cor:gaussian-width}. Indeed,
	\[
	\Theta_B(\seca(S_{\Sigma_s}))
	=
	d\norm B_\infty^2,
	\]
	and \(a_s\ge\log d\ge\log m\) for $C$ large enough. Hence
	\[
	\left(a_s+\log d\right)
	\left(a_s+\log m\right)
	\le4a_s^2,
	\]
	so the assumed bound implies the admissibility condition in
	\Cref{cor:gaussian-width}.
\end{proof}

Thus, for fixed $\eta$, throughout the admissible range, $\Phi$ achieves the restricted isometry property with high probability at the optimal row count
\[
m
\asymp
\eps^{-2}
\left(
s\log\frac{ed}{s}
\right).
\]

\medskip
\noindent\emph{Low-rank matrices.}
Let \(d=n_1n_2\), and identify \(\R^d\) with
\(\R^{n_1\times n_2}\). For
\(1\le r\le\min\{n_1,n_2\}\), let
\[
\mathcal R_r
:=
\left\{
X\in\R^{n_1\times n_2}:
\operatorname{rank}(X)\le r
\right\}.
\]
The balanced cone \(\mathcal R_r\) is
\(\sqrt2\)-secant-generating. Indeed, if
\(X,Y\in S_{\mathcal R_r}\), then \(X-Y\) has rank at most \(2r\).
Splitting its singular value decomposition into two orthogonal parts
of rank at most \(r\), we may write
\[
X-Y=H_1+H_2,
\qquad
H_1,H_2\in\mathcal R_r,
\qquad
\langle H_1,H_2\rangle=0,
\]
and hence
\[
\norm{H_1}_F+\norm{H_2}_F
\le
\sqrt2\norm{X-Y}_F.
\]
The standard Gaussian-width estimate
\[
w(S_{\mathcal R_r})^2
\asymp
r(n_1+n_2-r)
\]
follows, for example, from
\cite[Exercise~4.50]{Vershynin2026}
together with Sudakov's and Dudley's inequalities. Moreover,
\[
\Theta_B(\seca(S_{\mathcal R_r}))
=
d\norm B_\infty^2,
\]
since every coordinate basis matrix belongs to
\(\seca(S_{\mathcal R_r})\).

Thus, the same application of \Cref{cor:gaussian-width} as in the
sparse case yields the rank restricted isometry property with high
probability throughout the corresponding admissible range, using the
optimal row count
\[
m
\asymp
\eps^{-2}r(n_1+n_2-r)
\]
for fixed \(\eta\). We omit the corresponding formal statement to avoid repetition.

\medskip
\noindent\emph{Finite unions of subspaces.}
Let $E_1,\ldots,E_s$ be subspaces of $\mathbb{R}^d$ and let 
\[ \mathscr E:=\bigcup_{j=1}^s E_j. \]
Although this set is a balanced cone,
its secants generally belong to sums of two subspaces. A more direct
approach is to use the fact that nets are quadratic norming families for a subspace. Consequently, the union of
such nets is a quadratic norming family for \(S_{\mathscr E}=\mathscr E \cap \mathbb{S}^{d-1}\). Set
\[
a_{\mathscr E}
=
\log\left(
\frac8\eta
\sum_{j=1}^s 9^{\dim E_j}
\right).
\]

\begin{proposition}\label{prop:subspace-family}
	Let \(0<\eps<1\) and \(0<\eta<1/2\). Assume that $m\geq 2025\eps^{-2}a_{\mathscr E}$ and
	\[
	36\kappa_I(A)\Theta_B(S_{\mathscr E})\,m
	\left(a_{\mathscr E}+\log d\right)
	\left(a_{\mathscr E}+\log m\right)
	\le
	d.
	\]
	Then, with probability at least \(1-\eta\), the random map
	\(\Phi:\R^d\to\R^m\) defined in \eqref{eq:Phi-main} satisfies
	\[
	\left|
	\norm{\Phi x}_2^2-\norm{x}_2^2
	\right|
	\le
	\eps\norm{x}_2^2
	\qquad
	\text{for every }x\in\mathscr E.
	\]
\end{proposition}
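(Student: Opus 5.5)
The plan is to build, for each subspace \(E_j\), a finite net that is a quadratic norming family for its unit sphere. Taking the union of these nets gives a quadratic norming family \(W\) for \(S_{\mathscr E}\). We then apply \Cref{thm:finite-JL} to \(W\) and transfer the conclusion with \Cref{prop:norming-transfer}, exactly as in \Cref{cor:gaussian-width}.

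\emph{Step 1: nets are norming for subspaces.} For each \(j\), let \(\mathcal N_j\subseteq E_j\cap\mathbb S^{d-1}\) be a \(1/4\)-net with \(|\mathcal N_j|\le 9^{\dim E_j}\); the standard volumetric bound gives \((1+2/\delta)^{k}=9^k\) for \(\delta=1/4\). Fix a symmetric \(H\) and set \(M_j=\sup_{x\in S_{E_j}}|\langle Hx,x\rangle|\), which equals the operator norm of the compression \(P_{E_j}HP_{E_j}\). For \(x\in S_{E_j}\), choose \(u\in\mathcal N_j\) with \(\norm{x-u}_2\le1/4\). Then
\[
|\langle Hx,x\rangle-\langle Hu,u\rangle|\le|\langle H(x-u),x\rangle|+|\langle Hu,x-u\rangle|\le 2\cdot\tfrac14 M_j,
\]
where we used that \(x-u\in E_j\) and that the compression has norm \(M_j\). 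Hence \(M_j\le \sup_{\mathcal N_j}|\langle Hu,u\rangle|+M_j/2\), so \(M_j\le 2\sup_{\mathcal N_j}|\langle Hu,u\rangle|\). Setting \(W=\bigcup_j\mathcal N_j\subseteq S_{\mathscr E}\) then gives a \(2\)-quadratic norming family for \(S_{\mathscr E}\), with \(\log(8|W|/\eta)\le a_{\mathscr E}\).

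\emph{Step 2: apply \Cref{thm:finite-JL} to \(W\).} Since \(W\subseteq S_{\mathscr E}\), we have \(\Theta_B(W)\le\Theta_B(S_{\mathscr E})\). With \(N=|W|\), the quantities in \eqref{eq:finite-logs} satisfy \(\ell\le a_{\mathscr E}\), \(L_d\le a_{\mathscr E}+\log d\) and \(L_m\le a_{\mathscr E}+\log m\). The assumed inequality therefore implies \eqref{eq:finite-capacity}. With probability at least \(1-\eta\), \Cref{thm:finite-JL} then gives
\[
\sup_{w\in W}|\norm{\Phi w}_2-1|\le 10\sqrt{a_{\mathscr E}/m}\le 2\eps/9,
\]
where the last inequality uses \(m\ge2025\eps^{-2}a_{\mathscr E}\).

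\emph{Step 3: transfer.} As in \Cref{cor:gaussian-width}, we have \(|\norm{\Phi w}_2^2-1|\le(2+2\eps/9)(2\eps/9)\le 40\eps/81\). \Cref{prop:norming-transfer} with \(\lambda=2\) then yields \(\sup_{x\in S_{\mathscr E}}|\norm{\Phi x}_2^2-1|\le 80\eps/81<\eps\), and homogeneity extends this to all of \(\mathscr E\).

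The main obstacle is Step 1, because norming constants must be controlled within each subspace separately. The key point is that \(x-u\) lies in \(E_j\), so the error is absorbed by the restricted supremum \(M_j\) rather than by the global operator norm of \(H\). The rest is bookkeeping identical to \Cref{cor:gaussian-width}.
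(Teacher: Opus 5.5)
Your proposal is correct and follows essentially the same route as the paper: a $1/4$-net of each $E_j\cap\mathbb S^{d-1}$ is shown to be $2$-quadratic norming via the restricted supremum $M_j$, the union $W$ is fed into \Cref{thm:finite-JL} using $\log(8|W|/\eta)\le a_{\mathscr E}$ and $\Theta_B(W)\le\Theta_B(S_{\mathscr E})$, and the bound is transferred with \Cref{prop:norming-transfer} and homogeneity. The constants ($2\eps/9$, then $40\eps/81$, then $80\eps/81<\eps$) match the paper's bookkeeping.
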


\begin{proof}
	For each \(j\in[s]\), let
	\(\mathcal N_j\subseteq E_j\cap\mathbb S^{d-1}\) be a \(1/4\)-net of $E_j \cap \mathbb{S}^{d-1}$ with $|\mathcal N_j|\le9^{\dim E_j}$, and set
	\[
	W
	=
	\bigcup_{j=1}^s\mathcal N_j.
	\]
	We first show that \(W\) is a \(2\)-quadratic norming family for
	\(S_{\mathscr E}\). Let \(H\) be symmetric, fix \(j\in[s]\), and set
	\[
	M_j
	=
	\sup_{x\in E_j\cap\mathbb S^{d-1}}
	|\langle Hx,x\rangle|.
	\]
	For \(x\in E_j\cap\mathbb S^{d-1}\), choose
	\(u\in\mathcal N_j\) such that \(\norm{x-u}_2\le1/4\). Since
	\(M_j\) is the operator norm of the restriction of \(H\) to
	\(E_j\),
	\[
	\begin{aligned}
		|\langle Hx,x\rangle|
		&\le
		|\langle Hu,u\rangle|
		+
		|\langle H(x-u),x\rangle|
		+
		|\langle Hu,x-u\rangle|\\
		&\le
		|\langle Hu,u\rangle|
		+
		\frac12M_j.
	\end{aligned}
	\]
	Taking the supremum over \(x\) gives
	\[
	M_j
	\le
	2\sup_{u\in\mathcal N_j}|\langle Hu,u\rangle|.
	\]
	Hence \(W\) is \(2\)-quadratic norming for \(S_{\mathscr E}\). Observe that
	\[
	\log\left(\frac{8|W|}{\eta}\right)
	\le
	\log\left(\frac{8}{\eta}\sum_{j=1}^s9^{\dim E_j}\right)
	=
	a_{\mathscr E},
	\]
	and also
	\[
	\Theta_B(W)\le\Theta_B(S_{\mathscr E}).
	\]
	The assumptions thus imply the admissibility condition in
	\Cref{thm:finite-JL}, and hence
	\[
	\sup_{w\in W}
	\left|
	\norm{\Phi w}_2-1
	\right|
	\le
	10\sqrt{\frac{a_{\mathscr E}}{m}}
	\le
	\frac{2\eps}{9}.
	\]
	Since \(\eps<1\),
	\[
	\sup_{w\in W}
	\left|
	\norm{\Phi w}_2^2-1
	\right|
	\le
	\frac{2\eps}{9}
	\left(2+\frac{2\eps}{9}\right)
	\le
	\frac{40}{81}\eps
	<
	\frac{\eps}{2}.
	\]
	The conclusion follows from \Cref{prop:norming-transfer} and
	homogeneity.
\end{proof}

Many structured models fall naturally into this framework, including
block, group, and tree sparsity, and jointly sparse matrices.
Such models can be represented as finite unions of
low-dimensional subspaces, so \Cref{prop:subspace-family} yields
restricted-isometry estimates governed by the largest subspace
dimension and the logarithm of the number of admissible subspaces.
Since the derivations are analogous, we omit the individual
statements.

\section{Concrete fast orthogonal constructions}
\label{sec:mixers}

The results of the previous sections are deterministic in the
orthogonal matrices \(A\) and \(B\). They may therefore be applied
conditionally to random matrices, provided that they are
independent of the random signs and permutation appearing in
\eqref{eq:Phi-main}. Thus, once high-probability estimates for
\(\kappa_I(A)\) and \(\Theta_B(X)\) are available, the Johnson--Lindenstrauss and restricted isometry guarantees from the previous sections follow by conditioning and adding the failure
probabilities. 

A useful, and perhaps unexpected, point is that \(A\) and \(B\) need
not be independent of each other. In particular, one may take \(A=B\).

In this section, we examine several concrete orthogonal transforms and
random orthogonal ensembles from the literature. We estimate
their parameters \(\kappa_I(A)\) and \(\Theta_B(X)\), and draw conclusions from the results developed in the previous sections.

Recall that, for every \(U\in O(d)\), every \(I\subseteq [d]\), and every
\(X\subseteq \mathbb S^{d-1}\),
\begin{equation}\label{eq:local-parameters-from-coherence}
	1\leq\kappa_I(U)
	\le
	d\norm U_\infty^2,
	\qquad
	1\leq\Theta_U(X)
	\le
	d\norm U_\infty^2.
\end{equation}
Thus, coherence provides a general upper bound for both parameters.
For several of the random ensembles considered below, however, direct
estimates of the sampled column energy or maximal variance are
substantially sharper than this global coherence bound.

\subsection{Flat deterministic transforms}

We begin with two standard deterministic transforms. Let \(H\) be a
normalized Walsh--Hadamard matrix, available when \(d\) is a power of
two. Since
\[
H_{ij}^2=\frac1d,
\]
from \eqref{eq:local-parameters-from-coherence} it follows that
\begin{equation}\label{eq:hadamard-parameters}
	\kappa_I(H)=1,
	\qquad
	\Theta_H(X)=1
\end{equation}
for every \(I\subseteq [d]\) and every
\(X\subseteq \mathbb S^{d-1}\).

We may also consider the orthonormal discrete cosine transform
\(C\in\mathbb R^{d\times d}\), which is available for every \(d\) and
satisfies
\[
\norm C_\infty^2
\le
\frac2d.
\]
Therefore
\begin{equation}\label{eq:dct-parameters}
	\kappa_I(C)\le2,
	\qquad
	\Theta_C(X)\le2.
\end{equation}
Both transforms can be applied in \(O(d\log d)\) time using fast
transform algorithms.

These estimates already yield concrete fast Johnson--Lindenstrauss
embeddings for finite sets. Indeed, suppose that \(A\) and \(B\) are chosen to be
either of the two transforms above, and consider
\[
\Phi
=
\sqrt{\frac dm}\,P_IAD_\xi P_\pi BD_{\xi'}.
\]
Since both \(\kappa_I(A)\) and \(\Theta_B(X)\) are bounded by universal
constants, \Cref{thm:finite-JL} shows that \(\Phi\) is a
Johnson--Lindenstrauss embedding with the optimal row count
\[
m
\asymp
\eps^{-2}\log\left(\frac{N}{\eta}\right),
\]
provided that
\[
m
\log\left(\frac{Nd}{\eta}\right)
\log\left(\frac{Nm}{\eta}\right)
\lesssim d.
\]
In particular, for fixed \(\eps\) and \(\eta\), this allows
\(\log N\lesssim d^{1/3}\), up to logarithmic considerations in the
transition between the different regimes. The same estimates also yield, through the results of
Section~\ref{sec:infinite-sets}, analogous guarantees for sparse vectors, low-rank matrices, and finite unions of subspaces.

This cube-root range reflects the limitations of the present
local parameter framework rather than those of fast structured
orthogonal transforms themselves. More refined analyses can exploit
additional structure of the underlying construction. For example,
Ailon and Liberty \cite{AilonLiberty2009} obtain similar fast
Johnson--Lindenstrauss transforms in the substantially larger
\(d^{1/2-\delta}\) regime, for any fixed \(\delta>0\), using a more
detailed analysis of the structured transform. Our approach instead
expresses the embedding guarantee entirely through
\(\kappa_I(A)\) and \(\Theta_B(X)\). As shown in Section~7, the
resulting cube-root barrier cannot, in general, be overcome using only
these two parameters.

\subsection{The uniform angle Kac walk}

A Kac update selects two distinct coordinates uniformly and applies a
rotation by an independent uniform angle in their coordinate plane. A
Kac walk of length $T$ is the product of \(T\) independent updates. This Markov chain was recently used in \cite{JainEtAl2022}
to construct fast and memory-efficient Johnson--Lindenstrauss embeddings
for finite sets. 

The following result collects key estimates from \cite[Lemma~3.3]{JainEtAl2022} in the form needed here.

\begin{proposition}\label{prop:kac-parameters}
	Let \(Q_T\) be a Kac walk of length $T$, and let
	\(I\subseteq [d]\), \(|I|=m\), be fixed. Let \(0<\varepsilon<1/2\) and
	\(0<\eta<1/2\). Suppose
	\[
	T
	\ge
	10d\log\left(\frac{2d}\eta\right)
	\qquad \text{and}
	\qquad
	m
	\ge
	64\varepsilon^{-2}\log\left(\frac{8d}\eta\right).
	\]
	Then, with probability at least \(1-\eta\),
	\[
	\kappa_I(Q_T)
	\le
	1+\varepsilon
	\]
	and
	\[
	\norm{Q_T}_\infty
	\le
	3\sqrt{\frac{\log(d/\eta)}{d}}.
	\]
\end{proposition}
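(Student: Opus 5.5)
The plan is to derive both estimates from the Kac walk's action on individual unit vectors, following \cite[Lemma~3.3]{JainEtAl2022}, and then take union bounds. The basic input is a moment bound for a single column. Fix a unit vector $v$ and put $v_t=Q_tv$. For a fixed $w\in\R^d$, consider the potential $\sum_k w_k\E (v_t)_k^4$, or more generally an exponential moment of a weighted quadratic $\sum_{j\in I}(v_t)_j^2$. Averaging over the uniform angle, a single Kac update acting on the pair $(i,j)$ with entries $(a,b)$ gives $\E(a')^4=\E(b')^4=\tfrac38(a^2+b^2)^2$. Hence the fourth-moment sum $S_4(t)=\sum_k (v_t)_k^4$ contracts in expectation toward its equilibrium value of order $3/d$, at rate roughly $1-c/d$ per step. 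Iterating for $T\gtrsim d\log d$ steps makes its expectation $O(1/d)$. The same computation applied to $\E\exp(\lambda (v_t)_k^2)$, with the angle averaging treated coordinatewise, gives sub-Gaussian tails for each entry at scale $d^{-1/2}$.

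\emph{Coherence bound.} Take $v=e_j$. From the step above we want
\[
\Pp\bigl\{|(Q_T)_{ij}|>3\sqrt{\log(d/\eta)/d}\bigr\}\le \eta/(2d^2),
\]
and then take a union bound over the $d^2$ entries. The condition $T\ge 10d\log(2d/\eta)$ is what makes the initial term $e^{-cT/d}$ (the deviation from the fully mixed value) smaller than $\eta/d^2$. My first choice would be to quote the entry tail from \cite[Lemma~3.3]{JainEtAl2022} directly, after checking that its constants match.

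\emph{Sampled column energy.} For each column $k$ we need
\[
\frac dm\sum_{j\in I}(Q_T)_{jk}^2\le 1+\varepsilon .
\]
By invariance under the random pair choices, $\E\sum_{j\in I}(Q_T)_{jk}^2\approx m/d$ up to an error of $e^{-cT/d}$. For concentration I would use that, once the walk has mixed, the column behaves like a uniform point on the sphere; its restriction to $I$ then has a chi-square-like law $\tfrac1d\chi^2_m$. Bernstein's inequality at deviation $\varepsilon m/d$ gives a tail $\exp(-c\varepsilon^2 m)$, and this is at most $\eta/(2d)$ once $m\ge64\varepsilon^{-2}\log(8d/\eta)$. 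A union bound over $k\in[d]$ and over the two estimates then completes the argument.

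The main obstacle is making this last step rigorous. The Kac walk at time $T$ is not exactly Haar distributed, so the chi-square heuristic needs either a coupling or total-variation bound, or a direct exponential-moment recursion for $\sum_{j\in I}(v_t)_j^2$ under the update. The recursion is the route I would try first, since \cite[Lemma~3.3]{JainEtAl2022} proceeds in this way. Since the statement only repackages that lemma, the proof here would largely consist of invoking it and translating its normalization into the definition of $\kappa_I$.
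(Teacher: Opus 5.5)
Your proposal is correct and is essentially the paper's proof: the paper simply invokes both parts of \cite[Lemma~3.3]{JainEtAl2022} with $X_0=e_k$, uses coordinate-permutation symmetry to replace the first $m$ coordinates by $I$, takes $K=3\sqrt{\log(d/\eta)/\log d}$ in the $\ell_\infty$ part, and union bounds over columns, so your exploratory moment-recursion and chi-square heuristics are not needed. The only remaining work is the arithmetic $8d^5e^{-T/(2d)}+2de^{-\varepsilon^2m/64}\le\eta/2$ and $2d^3e^{-T/(2d)}+2d^{5/2}(\eta/d)^{9/2}\le\eta/2$ under the stated hypotheses on $T$ and $m$.
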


\begin{proof}
	We first use the following estimate from
	\cite[Lemma~3.3]{JainEtAl2022}: if \(X_T=Q_TX_0\), then for
	\(1\le k\le d\) and \(0<\varepsilon<1/2\),
	\[
	\Pp\left\{
	\sum_{i=1}^k (X_T)_i^2
	\notin
	\frac{k}{d}[1-\varepsilon,1+\varepsilon]
	\right\}
	\le
	8d^4e^{-T/(2d)}
	+
	2e^{-\varepsilon^2k/64}.
	\]
	By coordinate permutation symmetry, the same estimate holds with the
	first \(m\) coordinates replaced by the fixed set \(I\). Apply this with \(X_0=e_j\) and \(k=m\).  Thus, a
	union bound over \(j\in[d]\) gives
	\[
	\Pp\left\{\kappa_I(Q_T)>1+\varepsilon\right\}
	\le
	8d^5e^{-T/(2d)}
	+
	2d e^{-\varepsilon^2m/64}
	\le
	\frac{\eta}{2}.
	\]
	
	For the second conclusion, apply the second part of
	\cite[Lemma~3.3]{JainEtAl2022} with
	\[
	K
	=
	3\sqrt{\frac{\log(d/\eta)}{\log d}}.
	\]
	Using $\eta\leq 1/2$, this yields
	\[
	\Pp\left\{
	\norm{Q_T}_\infty
	>
	3\sqrt{\frac{\log(d/\eta)}{d}}
	\right\}
	\le
	2d^3e^{-T/(2d)}
	+
	2d^{5/2}
	\left(\frac{\eta}{d}\right)^{9/2}
	\le
	\frac{\eta}{2}.
	\]
	A union bound completes the proof.
\end{proof}

Thus, a Kac factor used on the left has essentially optimal sampled
column energy once \(m\gtrsim\log(d/\eta)\). On the right, the global
entrywise estimate gives
\[
\Theta_{Q_T}(X)
\le
d\norm{Q_T}_\infty^2
=O\left (\log \left(d/\eta\right )\right ).
\]
The global estimate for the right parameter is sharp whenever
\(X\) contains the standard basis, since
\[
\Theta_U(\{e_1,\ldots,e_d\})
=
d\norm U_\infty^2.
\]

The preceding proposition allows us to use Kac walks directly in our
general framework. Fix \(\eta\), and let \(A\) and \(B\) be independent
Kac walks of length \(T=O(d\log d)\). With high probability,
\[
\kappa_I(A)\Theta_B(X)
=
O(\log d).
\]
Thus, \Cref{thm:finite-JL} yields the optimal Johnson--Lindenstrauss
row count
\[
m\asymp\eps^{-2}\log N
\]
throughout the corresponding admissible range, which has a
logarithmic loss compared with the flat deterministic examples above.
The results of Section~\ref{sec:infinite-sets} similarly yield
restricted isometry guarantees for structured infinite models.

In this setting there is a useful simplification. The law of a Kac
walk is invariant under conjugation by signed permutation matrices.
Consequently, the middle signed permutation in our model
can be moved through $B$, after which the two
independent Kac walks combine into a single longer walk.

\begin{corollary}\label{cor:kac-JL}
	There exist universal constants \(c,C>0\) such that the following
	holds. Let \(X\subseteq \mathbb S^{d-1}\) with \(|X|=N\geq d\),
	\(0<\eps<1\), \(0<\eta<1/2\), and
	\(I\subseteq [d]\) with \(|I|=m\). Let \(Q_T\) be a Kac walk with
	\(T\geq Cd\log(d/\eta)\), let \(D_\xi\) be an independent
	Rademacher diagonal, and let \(\pi\) be an independent uniform
	permutation on \(S_d\). Define
	\[
	\Phi=\sqrt{\frac dm}\,P_IQ_TD_\xi P_\pi.
	\]
	Assume the admissibility condition
	\[
	m\log\left(\frac d\eta\right)
	\log\left(\frac{Nd}{\eta}\right)
	\log\left(\frac{Nm}{\eta}\right)
	\leq c d.
	\]
	If $m\geq C\eps^{-2}\log(N/\eta)$,
	then, with probability at least \(1-\eta\),
	\[
	\sup_{x\in X}
	\left|
	\norm{\Phi x}_2-\norm{x}_2
	\right|
	\le
	\eps\norm{x}_2.
	\]
\end{corollary}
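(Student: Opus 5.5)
We will reduce \Cref{cor:kac-JL} to \Cref{thm:finite-JL} with \(A\) and \(B\) two independent Kac walks, using the invariance noted just before the statement. Write \(T=T_1+T_2\) with \(T_1,T_2\ge 10d\log(2d/\eta')\), where \(\eta'=\eta/4\). Let \(A=Q'_{T_1}\) and \(B=Q''_{T_2}\) be independent Kac walks, independent of \(D_\xi,P_\pi\) and of a further Rademacher vector \(\xi'\). Consider
\[
\Phi'=\sqrt{\tfrac dm}\,P_IAD_\xi P_\pi BD_{\xi'}.
\]
We first claim \(\Phi'\stackrel{\mathrm d}=\Phi\). Set \(S=D_\xi P_\pi\), a uniform signed permutation independent of everything else. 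Then \(SBD_{\xi'}=(SBS^{-1})\,SD_{\xi'}\). Conjugation by a fixed signed permutation maps each Kac update (rotation by a uniform angle in a uniformly chosen coordinate plane) to another such update, since reflections only change the sign of the angle and the angle law is symmetric. Hence, conditionally on \(S\), \(SBS^{-1}\) is again a Kac walk of length \(T_2\), independent of \(S\) and of \(A\). Also \(SD_{\xi'}=D_\xi P_\pi D_{\xi'}=D_{\xi}D_{\xi'_\pi}P_\pi\), and \(D_\xi D_{\xi'_\pi}\) is a Rademacher diagonal independent of \(\pi\). Concatenating \(A\) with the conjugated copy of \(B\) gives a Kac walk of length \(T\). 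This shows \(\Phi'\) has the law of \(\sqrt{d/m}\,P_IQ_TD_{\xi''}P_\pi\), which is \(\Phi\). Checking this independence structure carefully is the one genuinely delicate point, and I would write it out explicitly.

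It therefore suffices to prove the bound for \(\Phi'\). Apply \Cref{prop:kac-parameters} to \(A\) with \(\varepsilon=1/2\)-type constant; this is allowed since \(m\ge C\eps^{-2}\log(N/\eta)\ge 256\log(8d/\eta')\) once \(N\ge d\) and \(C\) is large. It gives \(\kappa_I(A)\le 2\) with probability \(1-\eta'\). Apply the same proposition to \(B\); its coherence conclusion gives \(\Theta_B(X)\le d\norm B_\infty^2\le 9\log(4d/\eta)\) with probability \(1-\eta'\). Conditional on these events (which depend only on \(A,B\), independent of \(\xi,\pi,\xi'\)), the admissibility condition \eqref{eq:finite-capacity} reads
\[
36\cdot 2\cdot 9\log(4d/\eta)\,m L_dL_m\le d,
\]
with \(L_d,L_m\) taken at confidence \(\eta'\). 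This follows from the assumed admissibility condition after choosing \(c\) small, since each \(\log\) changes by at most a constant factor when \(\eta\) is replaced by \(\eta/4\) and \(N\ge d\ge2\).

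\Cref{thm:finite-JL} at confidence \(\eta'\), applied to the normalized points \(x/\norm x_2\) (by homogeneity; drop \(x=0\)), gives \(\sup_x|\norm{\Phi' x}_2-1|\le 10\sqrt{\log(32N/\eta)/m}\). This is at most \(\eps\) once \(C\ge 100\cdot\) a constant, using \(\log(32N/\eta)\lesssim\log(N/\eta)\). Summing the three failure probabilities gives at most \(3\eta/4\le\eta\). The main obstacle is the distributional identity in the first step. The remaining work is bookkeeping of constants, which only requires \(C\) large and \(c\) small.
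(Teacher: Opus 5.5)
Your proposal is correct and follows the same route as the paper. You split $Q_T$ into two independent Kac walks and use invariance under conjugation by the signed permutation $D_\xi P_\pi$ to identify $\Phi$ in law with $\sqrt{d/m}\,P_IAD_\xi P_\pi BD_{\xi'}$. You then bound $\kappa_I(A)=O(1)$ and $\Theta_B(X)\le d\norm{B}_\infty^2=O(\log(d/\eta))$ via \Cref{prop:kac-parameters}, and conclude with \Cref{thm:finite-JL}. Compared with the paper, you are more explicit about the joint independence behind the distributional identity, and about why $m\ge C\eps^{-2}\log(N/\eta)$ with $N\ge d$ meets the row requirement of \Cref{prop:kac-parameters}.
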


\begin{proof}
	Split \(Q_T=AB\), where \(A\) and \(B\) are
	independent Kac walks of comparable length. Let
	\(\mathcal R=D_{\xi}P_\pi\) be a signed permutation matrix, and let $\xi'$ be an independent Rademacher vector. Since the law of \(B\) is
	invariant under conjugation by signed permutations,
	\[
	A\mathcal RBD_{\xi'}
	=
	A(\mathcal RB\mathcal R^{\mathsf T})(\mathcal RD_{\xi'})
	\stackrel{\mathrm d}{=}
	ABD_{\widetilde\xi}P_\pi
	=
	Q_TD_{\widetilde\xi}P_\pi,
	\]
	where \(\widetilde\xi\) is again a Rademacher vector. By \Cref{prop:kac-parameters}, applied with failure probability
	\(\eta/4\) to each of \(A\) and \(B\), and a union bound, with
	probability at least \(1-\eta/2\),
	\[
	\kappa_I(A)=O(1),
	\qquad
	\Theta_B(X)
	\le
	d\norm B_\infty^2
	=O\left(\log (d/\eta)\right).
	\]
	The assumed range for \(m\) therefore implies the admissibility
	condition in \Cref{thm:finite-JL}. Applying it with failure probability \(\eta/2\), the conclusion follows after adjusting the universal constants.
\end{proof}

This should be compared with the more refined analysis in \cite{JainEtAl2022}. Their construction uses a two-stage scheme that is
analyzed through chaining and restricted isometry arguments, rather than only the
parameters \(\kappa_I(A)\) and \(\Theta_B(X)\). This allows them to
reach, up to logarithmic factors, a square root regime
\[
\log N= O(\eps\sqrt d)
\]
while retaining \(O(d\log d)\) application time in that range. By contrast, the
preceding corollary shows that, within our smaller admissible range, a
single Kac walk already suffices. Our approach is more direct and fits
into the general framework developed above, but consequently inherits
the cube-root limitation. As discussed
in Section~7, overcoming this limitation requires additional structural information.

\subsection{ORA and symmetric ORA}

Orthogonal repeated averaging, abbreviated ORA, is the fixed-angle analogue of the Kac walk, in which every update uses the angle $\pi/4$. More precisely, at each ORA update, an ordered pair $(i,j)$ with $i\neq j$ is chosen uniformly from the $d(d-1)$ ordered pairs of distinct coordinates, and a rotation by $\pi/4$ is applied to $(x_i,x_j)$. The remaining coordinates are unchanged. Following
\cite{JainEtAl2022}, symmetric ORA, abbreviated S-ORA, uses instead an
angle chosen uniformly from
\[
\left\{
\frac\pi4,\frac{3\pi}4,\frac{5\pi}4,\frac{7\pi}4
\right\}.
\]

We now turn to the proof of \Cref{thm:one-block-ora} and establish,
in addition, a constant sampled column energy estimate for ORA
throughout a corresponding admissible range.

Our argument has three ingredients. First, fixed-order moment estimates
give a weak bound on the largest entry of a short ORA walk. Second, the coherence and sampled column energy amplification results turn these preliminary bounds into near optimal control of the maximum entry and sampled column energy. Finally, the symmetries of
S-ORA identify the auxiliary two-block product with one longer walk,
and an exact coupling transfers the resulting parameter estimates back
to ordinary ORA.

We begin with a preliminary coherence estimate for a short ORA walk, obtained from the fixed-order moment bounds in \cite{JainEtAl2022}. Although this estimate is
far from the optimal coherence scale, it is strong enough to initiate
the amplification argument.

\begin{proposition}
	\label{prop:ora-weak}
	Fix \(M>0\) and \(0<\rho<1/2\). There exists
	\(C_{M,\rho}>0\) such that an ORA block \(Q_T\) of length $T\geq C_{M,\rho}d\log d$ satisfies
	\[
	\Pp\left\{
	\|Q_T\|_\infty
	>
	d^{-1/2+\rho/2}
	\right\}
	\le
	d^{-M}
	\]
	for all sufficiently large \(d\).
\end{proposition}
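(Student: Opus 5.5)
The plan is to bound each entry of \(Q_T\) through a fixed high moment and then apply Markov's inequality and a union bound. Fix an initial vector \(X_0=e_j\) and set \(X_T=Q_Te_j\), so that \((Q_T)_{ij}=(X_T)_i\). The key input is the fixed-order moment estimate for ORA from \cite{JainEtAl2022}. For each fixed integer \(p\ge1\) there are constants \(C_p,c_p>0\) such that, once \(T\ge C_p d\log d\),
\[
\E\sum_{i=1}^d (X_T)_i^{2p}\le c_p\, d^{1-p}.
\]
Equivalently, \(\E\norm{X_T}_{2p}^{2p}\lesssim_p d^{1-p}\), which matches the value \(d\cdot d^{-p}\) attained by a perfectly flat unit vector. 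The natural route is to view \(\sum_i (X_T)_i^{2p}\) as a degree-\(2p\) polynomial in the walk. Under one ORA update the pair \((x_i,x_j)\) is replaced by \(((x_i+x_j)/\sqrt2,(x_j-x_i)/\sqrt2)\) or its variant, which yields a contraction of this moment toward the flat value by a factor \(1-c/d\) per step, up to lower-order terms. After \(T\) steps the deviation from a starting value of \(1\) is at most \(e^{-cT/d}\), and this is below \(d^{1-p}\) once \(T\ge C_p d\log d\). I will take this estimate as given from \cite{JainEtAl2022}. Adapting their statement to the precise form above is the step needing the most care: their bound is formulated for fixed \(p\), possibly with \(T\) depending on \(p\), and I must check that the constants depend only on \(p\), not on \(d\).

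Given the moment bound, Markov's inequality finishes the argument. For each \(i,j\),
\[
\Pp\{|(Q_T)_{ij}|>d^{-1/2+\rho/2}\}\le \frac{\E (X_T)_i^{2p}}{d^{-p+p\rho}}\le \frac{c_p d^{1-p}}{d^{-p+p\rho}}=c_p\,d^{1-p\rho}.
\]
Summing over all \(i\) at once gives \(\sum_i \Pp\{|(X_T)_i|>\cdot\}\le \E\sum_i (X_T)_i^{2p}/d^{-p+p\rho}\le c_p d^{1-p\rho}\). A union bound over the \(d\) columns \(j\) then gives
\[
\Pp\{\|Q_T\|_\infty>d^{-1/2+\rho/2}\}\le c_p d^{2-p\rho}.
\]

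Now choose \(p=\lceil (M+3)/\rho\rceil\). Then \(c_pd^{2-p\rho}\le c_pd^{-M-1}\le d^{-M}\) for all sufficiently large \(d\). Setting \(C_{M,\rho}=C_p\) for this choice of \(p\) gives the claim.

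The only genuine obstacle is the moment input, specifically its uniformity in \(d\) at fixed order \(p\). If the estimate in \cite{JainEtAl2022} is stated for the individual coordinate moment \(\E(X_T)_1^{2p}\), I will instead use coordinate symmetry of the ORA law, which holds since the pair is chosen uniformly. This symmetry transfers the bound to every coordinate and every starting vector \(e_j\), and the same computation then goes through.
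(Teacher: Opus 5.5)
Your proposal is correct and follows essentially the same route as the paper. Both take the fixed-order moment bound $\E\sum_i |(Q_Te_j)_i|^{2p}\le C_pd^{1-p}$ from \cite[Proposition~4.3]{JainEtAl2022} as given, apply Markov's inequality column by column, take a union bound over the $d$ columns to get $C_pd^{2-p\rho}$, and finish by choosing $p$ large relative to $M/\rho$ (your choice $p=\lceil (M+3)/\rho\rceil$ works just as well as the paper's $p>(M+2)/\rho$).
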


\begin{proof}
	For every fixed integer \(p\),
	\cite[Proposition~4.3]{JainEtAl2022} gives, uniformly over
	\(j\in[d]\),
	\[
	\E\sum_{i=1}^d|(Q_Te_j)_i|^{2p}
	\le
	C_pd^{1-p}
	\]
	for \(T\geq C_pd\log d\). Consequently, Markov's inequality yields
	\[
	\Pp\left\{
	\max_{i\in[d]}|(Q_Te_j)_i|^2>d^{-1+\rho}
	\right\}
	\le
	C_pd^{1-p\rho}.
	\]
	Take an integer \(p>(M+2)/\rho\). A union bound over \(j\in[d]\) gives
	\[ \prob{\|Q_T\|^2_\infty >d^{-1 + \rho}} \leq C_p d^{2-p\rho} \leq d^{-M}\]
	for all sufficiently large $d$.
\end{proof}

We next relate ordinary ORA to its symmetric counterpart. A
coupling argument shows that the two walks differ only by signed permutations, which may be placed on either side of the
walk. In particular, they have exactly
the same distributions for the three parameters relevant to this
paper.

Let \(\mathcal H_d\) denote the group of signed permutation matrices, that is,
\[
\mathcal H_d
:=
\left\{
D_\xi P_\pi:
\xi\in\{-1,1\}^d,\ \pi\in S_d
\right\}
\subseteq O(d).
\]

\begin{lemma}\label{lem:ora-sora-coupling}
	There exist couplings of an ORA walk \(Q_T^{\mathrm O}\) and an
	S-ORA walk \(Q_T^{\mathrm S}\), both of length \(T\), such that,
	possibly under different couplings,
	\[
	Q_T^{\mathrm S}
	=
	G_T^{\mathrm L}Q_T^{\mathrm O},
	\qquad 
	Q_T^{\mathrm S}
	=
	Q_T^{\mathrm O}G_T^{\mathrm R},
	\]
	where \(G_T^{\mathrm L}\), \(G_T^{\mathrm R} \in \mathcal H_d\) are signed
	permutation matrices. Consequently, for every $I\subseteq [d]$ and every $X\subseteq \mathbb{S}^{d-1}$, we have
	\[
	\norm{Q_T^{\mathrm S}}_\infty
	\stackrel{\mathrm d}=
	\norm{Q_T^{\mathrm O}}_\infty,
	\qquad
	\kappa_I(Q_T^{\mathrm S})
	\stackrel{\mathrm d}=
	\kappa_I(Q_T^{\mathrm O}),
	\qquad
	\Theta_{Q_T^{\mathrm S}}(X)
	\stackrel{\mathrm d}=
	\Theta_{Q_T^{\mathrm O}}(X).
	\]
\end{lemma}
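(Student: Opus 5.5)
The plan is to write each S-ORA update as an ORA update times a random element of \(\mathcal H_d\), and then push those signed permutations to one end of the product by conjugation. For an ordered pair \((i,j)\) and an angle \(\theta\), let \(R_{(i,j)}(\theta)\) be the rotation by \(\theta\) in the \((x_i,x_j)\) plane. Let \(J_{(i,j)}=R_{(i,j)}(\pi/2)\), which sends \((x_i,x_j)\mapsto(-x_j,x_i)\); it is a signed permutation. The S-ORA angles are exactly \(\pi/4+k\pi/2\) with \(k\in\{0,1,2,3\}\), so an S-ORA step equals
\[
R_{(i,j)}(\pi/4)\,J_{(i,j)}^k = J_{(i,j)}^k\,R_{(i,j)}(\pi/4),
\]
since rotations in the same plane commute. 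Here \((i,j)\) is a uniform ordered pair and \(k\) is an independent uniform element of \(\{0,1,2,3\}\). Thus \(Q_T^{\mathrm S}=R_Th_TR_{T-1}h_{T-1}\cdots R_1h_1\). Each \(R_t\) is an ORA update on a uniform ordered pair, each \(h_t\in\mathcal H_d\), and the \(h_t\) are independent of the pairs. (It is harmless that \(h_t\) depends on the pair chosen at step \(t\): conditionally on that pair, \(k\) is independent. Conditioning on the \(k\)'s only, the pairs stay i.i.d. uniform.)

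The key observation is that \(\mathcal H_d\) permutes ORA updates under conjugation. For \(g=D_\xi P_\pi\in\mathcal H_d\), the matrix \(gR_{(i,j)}(\pi/4)g^{-1}\) acts only on the two coordinates \(\{\pi^{-1}(i),\pi^{-1}(j)\}\). On that plane it is the conjugate of a \(\pi/4\) rotation by a \(2\times2\) signed permutation. Such a conjugate is a rotation by \(\pm\pi/4\), and a rotation by \(-\pi/4\) on \((a,b)\) is a rotation by \(\pi/4\) on \((b,a)\). Hence
\[
gR_{(i,j)}(\pi/4)g^{-1}=R_{\phi_g(i,j)}(\pi/4),
\]
where \(\phi_g\) is a bijection of the \(d(d-1)\) ordered pairs determined by \(g\). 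Checking this \(2\times2\) sign bookkeeping carefully is the only computational step.

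For the right coupling, set \(g_t=h_Th_{T-1}\cdots h_{t+1}\), so that \(g_T=I_d\). Telescoping gives
\[
Q_T^{\mathrm S}=\prod_{t=T}^{1}\bigl(g_tR_tg_t^{-1}\bigr)\,g_0 .
\]
Each \(g_t\) is built from the \(h\)'s alone, and the bijection \(\phi_{g_t}\) maps a uniform pair to a uniform pair. So, conditionally on \((h_t)\), the conjugated factors are independent ORA updates on i.i.d. uniform ordered pairs. Hence their product is an ORA walk \(Q_T^{\mathrm O}\), and we obtain \(Q_T^{\mathrm S}=Q_T^{\mathrm O}G_T^{\mathrm R}\) with \(G_T^{\mathrm R}=g_0\). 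The left coupling is symmetric: write each step as \(h_tR_t\) and push the \(h\)'s to the left, using \(f_t=h_1\cdots h_{t-1}\) conjugating \(R_t\) as \(f_t^{-1}R_tf_t\).

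The main obstacle is the conditional-independence claim, namely that conjugating i.i.d. uniform pairs by signed permutations that depend on all the \(h\)'s still yields i.i.d. uniform pairs. This holds because the \(h\)-sequence is independent of the pair sequence and each \(\phi_{g}\) is a bijection.

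The distributional identities then follow from invariances. Left or right multiplication by an element of \(\mathcal H_d\) only permutes entries up to sign, so \(\norm{Q_T^{\mathrm S}}_\infty=\norm{Q_T^{\mathrm O}}_\infty\) under either coupling. For \(\kappa_I\), use the right coupling: \(Q^{\mathrm O}G^{\mathrm R}\) has the columns of \(Q^{\mathrm O}\) permuted and signed, and \(\max_k\sum_{j\in I}A_{jk}^2\) is invariant under that. For \(\Theta\), use the left coupling: \(G^{\mathrm L}Q^{\mathrm O}\) has the rows of \(Q^{\mathrm O}\) permuted and signed, and \(\sup_{x,k}\sum_jB_{kj}^2x_j^2\) is invariant under row permutations and signs. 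In each case the two quantities are almost surely equal under the chosen coupling, so they are equal in distribution.
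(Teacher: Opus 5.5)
Your overall strategy matches the paper's, and several pieces are correct: writing each S-ORA step as $J_{(i,j)}^{k}R_{(i,j)}(\pi/4)$, the conjugation bijection $\phi_g$, the telescoping identity with $g_t=h_T\cdots h_{t+1}$, and the final invariance argument for $\norm{\cdot}_\infty$, $\kappa_I$ and $\Theta$.

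\textbf{The gap.} It sits in the step you yourself flag as the main obstacle. You justify it by two claims: that the $h$-sequence is independent of the pair sequence, and that, conditionally on $(h_t)$, the pairs $\phi_{g_t}(i_t,j_t)$ are i.i.d.\ uniform. Both claims are false. As your own parenthetical concedes, $h_t=J_{(i_t,j_t)}^{k_t}$ is a function of the pair. Whenever $k_t\neq 0$ it reveals the unordered pair $\{i_t,j_t\}$. Already for $T=1$, where $g_1=I_d$, the event $\{h_1=J_{(1,2)}\}$ forces $(i_1,j_1)\in\{(1,2),(2,1)\}$, so the conditional law of the single ORA factor is far from uniform. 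Your remark that the pairs stay uniform after conditioning on the $k$'s does not rescue this, because $g_t$ depends on the $h$'s and not on the $k$'s alone.

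\textbf{The fix.} The conclusion is true, but the reason is the time ordering, not independence from the $h$'s. In your right coupling, $g_t$ is measurable with respect to $\mathcal F_{>t}=\sigma\bigl((i_s,j_s,k_s)_{s>t}\bigr)$, while $(i_t,j_t)$ is uniform and independent of $\mathcal F_{>t}$. Hence $\phi_{g_t}(i_t,j_t)$ is uniform conditionally on $\mathcal F_{>t}$. This $\sigma$-algebra also determines every later conjugated pair $\phi_{g_s}(i_s,j_s)$ with $s>t$. Backward induction in $t$ then shows that the conjugated pairs are i.i.d.\ uniform, so their product is an ORA walk. That is an unconditional statement, which is all the coupling requires.

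For the left coupling, note that the conjugator of $R_t$ must be $h_{t-1}\cdots h_1$, not $h_1\cdots h_{t-1}$ as you wrote, since signed permutations do not commute. This conjugator depends only on steps before $t$, so the same argument works with forward induction. That is exactly the paper's inductive construction. The paper then obtains the right coupling from the left one by taking inverses, using that ORA and S-ORA walks are equal in law to their inverses.
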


In particular,
\Cref{prop:ora-weak} holds with the same statement for S-ORA. The proof is given in \Cref{app:ora-symmetry}.

The next lemma shows that an orthogonal product of the form \eqref{eq:U}, with independent S-ORA walks on both sides behaves, for all the
parameters considered in this paper, as one longer S-ORA walk.

\begin{lemma}
	\label{lem:sora-bridge}
	Let $Q_1$ and $Q_2$ be independent S-ORA walks of lengths $T_1$ and $T_2$, respectively, and let
	\(\Sigma\in\mathcal H_d\) be independent of them. Let
	\(F\colon O(d)\to\R\) be invariant under either left or right
	multiplication by signed permutation matrices. Then
	\[
	F(Q_2\Sigma Q_1)
	\stackrel{\mathrm d}=
	F(Q_{T_1+T_2}^{\mathrm S}),
	\]
	where $Q_{T_1+T_2}^{\mathrm S}$ is an S-ORA walk of length $T_1+T_2$.
\end{lemma}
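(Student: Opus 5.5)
The plan is to show that the law of an S-ORA walk is invariant under conjugation by signed permutations. Then the middle factor $\Sigma$ can be pushed to one end of the product, where $F$ absorbs it.

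\emph{Step 1: conjugation invariance of a single update.} A single S-ORA update is $R = R_{ij}(\theta)$: a rotation by $\theta$ in the $(e_i,e_j)$ plane, with $(i,j)$ a uniform ordered pair of distinct coordinates and $\theta$ uniform on $\{\pi/4,3\pi/4,5\pi/4,7\pi/4\}$. I will check that $S R S^{\mathsf T}\stackrel{\mathrm d}{=} R$ for every fixed $S\in\mathcal H_d$, and it suffices to check this on generators.

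For a permutation matrix $P_\sigma$, the conjugate $P_\sigma R_{ij}(\theta)P_\sigma^{\mathsf T}$ is the rotation by $\theta$ on the image pair of $(i,j)$, with the same orientation. Since the ordered pair is uniform, the law is preserved.

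For a sign flip $D$ acting on coordinate $i$ only, conjugation turns $R_{ij}(\theta)$ into $R_{ij}(-\theta)$, because the off-diagonal entries change sign. The set $\{\pi/4,3\pi/4,5\pi/4,7\pi/4\}$ is closed under $\theta\mapsto-\theta$ modulo $2\pi$, so the law is again preserved. Flips of coordinates outside $\{i,j\}$ commute with $R_{ij}(\theta)$.

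This symmetry of the angle set is exactly why S-ORA is used here rather than ORA.

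\emph{Step 2: invariance of the walk.} Write $Q=R_T\cdots R_1$ with independent updates. Then
\[
SQS^{\mathsf T}=(SR_TS^{\mathsf T})\cdots(SR_1S^{\mathsf T}).
\]
The factors are independent, and each has the update law by Step 1, so $SQS^{\mathsf T}\stackrel{\mathrm d}{=}Q$. Conditioning on $\Sigma$, which is independent of $Q_2$, this extends to a random $\Sigma$: we get $(\Sigma^{\mathsf T} Q_2\Sigma,\Sigma,Q_1)\stackrel{\mathrm d}{=}(Q_2',\Sigma,Q_1)$ with $Q_2'$ independent of $(\Sigma,Q_1)$ and distributed as an S-ORA walk of length $T_2$.

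\emph{Step 3: conclusion.} Suppose first that $F$ is left-invariant. Write
\[
Q_2\Sigma Q_1=\Sigma(\Sigma^{\mathsf T}Q_2\Sigma)Q_1,
\]
so that
\[
F(Q_2\Sigma Q_1)=F\bigl((\Sigma^{\mathsf T}Q_2\Sigma)Q_1\bigr)\stackrel{\mathrm d}{=}F(Q_2'Q_1).
\]
Here $Q_2'Q_1$ is a product of $T_1+T_2$ independent S-ORA updates, hence an S-ORA walk of length $T_1+T_2$.

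If instead $F$ is right-invariant, write $Q_2\Sigma Q_1=Q_2(\Sigma Q_1\Sigma^{\mathsf T})\Sigma$ and apply the same argument to $Q_1$.

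The only genuine check is Step 1, specifically the sign-flip computation. It relies on the angle set being symmetric under negation, and that symmetry fails for plain ORA, where the angle is always $\pi/4$. The remaining independence bookkeeping in Steps 2 and 3 is routine.
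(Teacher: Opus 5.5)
Your proof is correct and is essentially the paper's argument: conjugating $Q_2$ (or $Q_1$) by the independent signed permutation $\Sigma$ only relabels the uniform ordered pair and possibly negates the angle, so it gives an independent S-ORA walk, and the leftover $\Sigma$ at the end is absorbed by the invariance of $F$. Your closing remark that this symmetry fails for plain ORA is not accurate, though it does not affect your proof: with uniformly chosen ordered pairs one has $R_{ij}(-\pi/4)=R_{ji}(\pi/4)$, so the ORA law is also invariant under conjugation by signed permutations, which is exactly what the paper uses in its ORA/S-ORA coupling lemma.
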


\begin{proof}
	Suppose first that \(F(UG)=F(U)\) for every
	\(G\in\mathcal H_d\). Set
	\[
	\widetilde Q_1
	=
	\Sigma Q_1\Sigma^{-1}.
	\]
	Conjugation by \(\Sigma\) only relabels the selected coordinate
	pair and possibly reverses the angle. Since the S-ORA selected pair is
	uniform and its angle distribution is invariant under
	\(\theta\mapsto-\theta\), the matrix \(\widetilde Q_1\) is a S-ORA walk of length $T_1$
	independent of $Q_2$. Hence
	\[
	Q_2\Sigma Q_1
	=
	Q_2\widetilde Q_1\Sigma,
	\]
	and therefore
	\[
	F(Q_2\Sigma Q_1)
	=
	F(Q_2\widetilde Q_1)
	\stackrel{\mathrm d}=
	F(Q_{T_1+T_2}^{\mathrm S}).
	\]
	If instead \(F(GU)=F(U)\), write
	\[
	Q_2\Sigma Q_1
	=
	\Sigma
	\bigl(\Sigma^{-1}Q_2\Sigma\bigr)Q_1
	\]
	and argue in the same way.
\end{proof}

The absolute maximal entry of a matrix is invariant under signed permutations on either
side. Also, sampled column energy is invariant under right multiplication, and
the parameter \(\Theta_U(X)\) is invariant under left multiplication.
Consequently, for every \(I\subseteq [d]\) and every
\(X\subseteq \mathbb S^{d-1}\),
\[
\norm{Q_2\Sigma Q_1}_\infty
\stackrel{\mathrm d}=
\norm{Q_{T_1+T_2}^{\mathrm S}}_\infty,
\qquad
\kappa_I(Q_2\Sigma Q_1)
\stackrel{\mathrm d}=
\kappa_I(Q_{T_1+T_2}^{\mathrm S}),
\qquad 
\Theta_{Q_2\Sigma Q_1}(X)
\stackrel{\mathrm d}=
\Theta_{Q_{T_1+T_2}^{\mathrm S}}(X).
\]

The preliminary entrywise bounds for two short S-ORA walks can be amplified to obtain substantially stronger control. Combined with the preceding symmetry reductions, this yields Theorem \ref{thm:one-block-ora}.

\begin{proof}[Proof of Theorem \ref{thm:one-block-ora}]
	Write \(T=T_1 + T_2\), where \(T_1,T_2 \geq C_0d\log d\) and
	\(C_0\) is large enough so that
	\Cref{prop:ora-weak} applies to \(T_1\) and $T_2$ with
	\[
	\rho=\frac{1}{4}
	\qquad\text{and}\qquad
	M=4.
	\]
	Let \(Q_1,Q_2\) be independent S-ORA walks of lengths \(T_1\), $T_2$, respectively. By
	\Cref{prop:ora-weak,lem:ora-sora-coupling}, the event
	\[
	E
	=
	\left\{
	\norm{Q_1}_\infty,\norm{Q_2}_\infty
	\le
	d^{-3/8}
	\right\}
	\]
	satisfies
	\[
	\Pp(E^c)
	\le
	2d^{-4}.
	\]
	Let \(D_\xi\) be a Rademacher diagonal, let \(\pi\) be an
	independent uniform permutation, and set
	\[
	U=Q_2D_\xi P_\pi Q_1.
	\]
	On the event \(E\), we have
	\[
	\norm{Q_2}_\infty\norm{Q_1}_\infty
	\le
	d^{-3/4}.
	\]
	Set $\eta_0=d^{-2}/2$. For all large enough $d$ we have $\log(2d^3) \leq \sqrt{d}$, and thus
	\[
	\norm{Q_2}_\infty\norm{Q_1}_\infty
	\le
	\frac{1}{
		\sqrt{d\log(d/\eta_0)}
	}.
	\]
	Consequently, for every realization of \(Q_1,Q_2\) in $E$,
	\Cref{cor:coherence-amplification} applies with failure probability
	\(\eta_0\), and gives
	\[
	\norm U_\infty
	\le
	4\sqrt{\frac{\log(2d^3)}{d}}\leq 8\sqrt{\frac{\log d}{d}},
	\]
	where we have used $d\geq2$. By \Cref{lem:sora-bridge} and \Cref{lem:ora-sora-coupling},
	\[
	\norm U_\infty
	\stackrel{\mathrm d}=
	\norm{Q_T^{\mathrm S}}_\infty
	\stackrel{\mathrm d}=
	\norm{Q_T}_\infty.
	\]
	Therefore,
	\[
	\Pp\left\{
	\norm{Q_T}_\infty
	>
	8\sqrt{\frac{\log d}{d}}
	\right\}
	\le
	\eta_0+\Pp(E^c)
	\le
	\frac{d^{-2}}{2}+2d^{-4}
	\le
	d^{-2},
	\]
	which proves \eqref{eq:6.1}.
\end{proof}

A similar argument shows that an ORA walk of length $O(d \log d)$ has constant sampled column energy throughout the following admissible range.

\begin{proposition}\label{prop:ora-column-energy}
	Fix \(0<\delta<1\). There exists \(C_\delta>0\) such that, for every
	$d$ sufficiently large, the following holds. Let \(Q_T\) be an ORA walk of length $T\geq C_\delta d\log d$. For every fixed \(I\subseteq [d]\) with \(|I|=m\), if
	\[
	m\ge600\log(16d^3)
	\qquad \text{and} \qquad
	36m\log(16d^3m)
	\le
	d^{1-\delta},
	\]
	then
	\[
	\Pp\left\{
	\kappa_I(Q_T)>2
	\right\}
	\le
	d^{-2}.
	\]
\end{proposition}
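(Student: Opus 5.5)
We follow the proof of \Cref{thm:one-block-ora}, but replace \Cref{cor:coherence-amplification} by \Cref{cor:leverage-amplification} as the amplification step.

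\emph{Setup.} Write \(T=T_1+T_2\) with \(T_1,T_2\ge C_0 d\log d\). Let \(Q_1,Q_2\) be independent S-ORA walks of lengths \(T_1,T_2\). Let \(D_\xi\) and \(P_\pi\) be an independent Rademacher diagonal and uniform permutation, and set \(U=Q_2D_\xi P_\pi Q_1\). By \Cref{lem:sora-bridge}, \(\kappa_I(U)\) has the law of \(\kappa_I\) of an S-ORA walk of length \(T\); here we use that \(\kappa_I\) is invariant under right multiplication by signed permutations. By \Cref{lem:ora-sora-coupling}, this law coincides with that of \(\kappa_I(Q_T)\) for ordinary ORA. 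It therefore suffices to show \(\Pp\{\kappa_I(U)>2\}\le d^{-2}\).

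\emph{Weak bounds.} Apply \Cref{prop:ora-weak} (via \Cref{lem:ora-sora-coupling}) with \(\rho=\delta/2\) and \(M=4\), with \(C_0=C_{4,\delta/2}\). This gives an event \(E\) with \(\Pp(E^c)\le 2d^{-4}\) on which \(\norm{Q_1}_\infty^2,\norm{Q_2}_\infty^2\le d^{-1+\delta/2}\). On \(E\), by \eqref{eq:local-parameters-from-coherence},
\[
\kappa_I(Q_2)\le d\norm{Q_2}_\infty^2\le d^{\delta/2},
\qquad
\norm{Q_1}_\infty^2\le d^{-1+\delta/2}.
\]
Hence \(\kappa_I(Q_2)\norm{Q_1}_\infty^2\le d^{-1+\delta}\).

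\emph{Amplification.} Condition on a realization of \(Q_1,Q_2\) in \(E\) and apply \Cref{cor:leverage-amplification} with \(A=Q_2\), \(B=Q_1\) and \(\eta=d^{-2}/2\). Then \(\ell=\log(16d^2)\le\log(16d^3)\) and \(L_m=\log(16d^2m)\le\log(16d^3m)\), so the hypothesis \(m\ge600\log(16d^3)\) gives \(m\ge 600\ell\). The second hypothesis gives
\[
36\,\kappa_I(Q_2)\norm{Q_1}_\infty^2\, mL_m\le d^{-1+\delta}\cdot 36\,m\log(16d^3m)\le d^{-1+\delta}\cdot d^{1-\delta}=1.
\]
So, conditionally on \(E\), \(\kappa_I(U)\le 2\) with probability at least \(1-d^{-2}/2\). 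Adding the failure probabilities gives \(\Pp\{\kappa_I(U)>2\}\le d^{-2}/2+2d^{-4}\le d^{-2}\) for large \(d\).

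\emph{Main obstacle.} The only delicate point is that \(\kappa_I(Q_2)\) is bounded crudely through the coherence, and this costs a factor \(d^{\delta/2}\). That is why \(\rho\) has to be tied to \(\delta\), and why the constant \(C_\delta\) depends on \(\delta\). The independence of \(Q_1,Q_2\) from \((\xi,\pi)\) makes the conditioning legitimate. The invariance of \(\kappa_I\) under right multiplication by \(\mathcal H_d\) is exactly what \Cref{lem:sora-bridge} requires, since \(\kappa_I(UG)\) only permutes the columns of \(U\) and flips their signs.
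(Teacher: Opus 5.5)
Your proposal is correct and follows the paper's proof essentially step for step: the same split $T=T_1+T_2$ into two independent S-ORA walks, the weak bound from \Cref{prop:ora-weak} with $\rho=\delta/2$ and $M=4$, amplification via \Cref{cor:leverage-amplification} with $\eta=d^{-2}/2$, and transfer back to ORA via \Cref{lem:sora-bridge,lem:ora-sora-coupling}. There is one harmless arithmetic slip: with $\eta=d^{-2}/2$ you get exactly $\ell=\log(8d/\eta)=\log(16d^3)$ and $L_m=\log(16d^3m)$, not $\log(16d^2)$ and $\log(16d^2m)$; since the inequalities you use then hold with equality, nothing in the argument changes.
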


\begin{proof}
	Write \(T=T_1+T_2\), where \(T_1, T_2\geq C_0d\log d\) and
	\(C_0=C_0(\delta)\) is large enough so that
	\Cref{prop:ora-weak} applies to \(T_1\) and $T_2$ with
	\[
	\rho=\frac{\delta}{2}
	\qquad\text{and}\qquad
	M=4.
	\]
	Let $Q_1$ and $Q_2$ be independent S-ORA walks of lengths \(T_1\) and $T_2$, respectively. By
	\Cref{prop:ora-weak,lem:ora-sora-coupling}, the event
	\[
	E
	=
	\left\{
	\norm{Q_1}_\infty,\norm{Q_2}_\infty
	\le
	d^{-1/2+\delta/4}
	\right\}
	\]
	satisfies
	\[
	\Pp(E^c)
	\le
	2d^{-4}.
	\]
	Let \(D_\xi\) be a Rademacher diagonal, let \(\pi\) be an
	independent uniform permutation, and set
	\[
	U=Q_2D_\xi P_\pi Q_1.
	\]
	On the event \(E\), we have
	\[
	\kappa_I(Q_2)
	\le
	d\norm{Q_2}_\infty^2
	\le
	d^{\delta/2},
	\qquad
	\norm{Q_1}_\infty^2
	\le
	d^{-1+\delta/2}.
	\]
	Hence
	\[
	\kappa_I(Q_2)
	\norm{Q_1}_\infty^2
	\le
	d^{-1+\delta}.
	\]
	For \(\eta_0=d^{-2}/2\), the logarithmic parameters in
	\Cref{cor:leverage-amplification} are
	\[
	\ell
	=
	\log\left(\frac{8d}{\eta_0}\right)
	=
	\log(16d^3),
	\qquad
	L_m
	=
	\log\left(\frac{8dm}{\eta_0}\right)
	=
	\log(16d^3m).
	\]
	Thus, the assumptions are satisfied. For every realization of $Q_1, Q_2$ in $E$, Corollary \ref{cor:leverage-amplification} applies with failure probability
	\(\eta_0\), and yields
	\[
	\kappa_I(U)\le2.
	\]
	Combining the conditional failure probability with
	\(\Pp(E^c)\le2d^{-4}\), we obtain
	\[
	\Pp\left\{
	\kappa_I(U)>2
	\right\}
	\le
	\frac{d^{-2}}{2}+2d^{-4}
	\le
	d^{-2}.
	\]
	Finally, \Cref{lem:sora-bridge} and
	\Cref{lem:ora-sora-coupling} transfer the conclusion to \(Q_T\).
\end{proof}

By the general coherence estimate, Theorem \ref{thm:one-block-ora} also implies that, for
every \(X\subseteq \mathbb S^{d-1}\),
\[
\Theta_{Q_T}(X)
\le
d\norm{Q_T}_\infty^2
\le
C\log d
\]
with high probability. Thus, together with the sampled column energy estimate, Theorem \ref{thm:finite-JL} can be applied to ORA blocks throughout the corresponding admissible range. Moreover, the law of S-ORA is invariant under conjugation by signed permutation matrices. Repeating the argument used in \Cref{cor:kac-JL}, and then using \Cref{lem:sora-bridge,lem:ora-sora-coupling} to pass back to ordinary ORA, yields an analogous result for a single ORA walk. In particular, the conclusion of \Cref{cor:kac-JL} remains valid with the Kac walk replaced by an ordinary ORA walk of length \(O_\delta(d\log d)\), under the corresponding admissibility condition. We omit the formal statement to avoid repetition.

\subsection{Removing the factor \texorpdfstring{\(\log\log d\)}{log log d} from the first stage of ORA}

Krahmer and Ward \cite{KrahmerWard2011} showed that
multiplying a matrix satisfying the RIP at an appropriate sparsity level by a Rademacher diagonal matrix yields a Johnson--Lindenstrauss embedding for finite sets. On the other hand, a result of Dirksen \cite{Dirksen2015} shows that randomly subsampled rows of an orthogonal
matrix with sufficiently small entries satisfy the RIP with high
probability. In \cite{JainEtAl2022}, these two
ingredients are combined in the first stage by running
ORA for
\[
T=O(d\log d\log\log d)
\]
steps until the resulting orthogonal matrix $Q_T$ satisfies
\begin{equation}\label{eq:requiredbound} \|Q_T\|_\infty = O\left (\sqrt{ \log d / d}\right ).
\end{equation}
Then they randomly subsample its rows and use the resulting RIP
matrix to obtain a Johnson--Lindenstrauss embedding into an
intermediate dimension. A second ORA stage then reduces this
intermediate dimension to the optimal order
\(\varepsilon^{-2}\log N\). This approach allows their construction to
reach, up to logarithmic factors, the square-root regime.

The only point in this argument that requires
\(O(d\log d\log\log d)\) updates is the coherence estimate
\eqref{eq:requiredbound} for the first ORA stage.
By \Cref{thm:one-block-ora}, the same estimate holds after only
\(O(d\log d)\) updates. The remaining arguments in the proof of
\cite[Theorem~1.6]{JainEtAl2022} are unchanged.

We also note that the total-variation estimates for S-ORA in
\cite[Lemmas~2.3 and~2.4]{JainEtAl2022} already have the required
accuracy after \(O(d\log d)\) steps. Thus, the corresponding
symmetrizations used in their proof remain valid at the shorter time
scale.

\begin{proof}[Proof of Theorem \ref{thm:jps-improvement}]
	If \(\eps^{-2}\log N>d\), it suffices to take \(m=d\) and use that an ORA walk is an isometry. We may therefore assume \(\eps^{-2}\log N\le d\).
	
	We follow the proof of \cite[Theorem~1.6]{JainEtAl2022}, changing
	only the length of the initial ORA walk \(Q_{T}\). By
	\Cref{thm:one-block-ora}, for \(T\ge C d\log d\), with \(C\)
	sufficiently large,
	\[
	\norm{Q_{T}}_\infty
	\le
	8\sqrt{\frac{\log d}{d}}
	\]
	with probability at least \(1-d^{-2}\). Thus \(Q_{T}\) satisfies
	the same boundedness hypothesis used in
	\cite[Theorem~3.4]{JainEtAl2022}; in the original proof this is
	obtained from \cite[Proposition~4.4]{JainEtAl2022} after
	\(O(d\log d\log\log d)\) updates.
	
	Hence the remainder of the proof of
	\cite[Theorem~1.6]{JainEtAl2022}, including the choice of the
	intermediate dimension and the second ORA stage, is unchanged after
	adjusting universal constants. The resulting map has \(m\le C\eps^{-2}\log N\) and
	succeeds with probability at least \(2/3\). The only change in the
	running time is that the initial ORA walk now costs
	\(O(d\log d)\). All remaining terms and the \(O(1)\) additional
	working memory bound are unchanged.
\end{proof}

\subsection{Parallel orthogonal repeated averaging}
\label{subsec:parallel-ora}

We begin by recalling the parallel version of the Kac walk introduced in \cite{LuQinSongYaoZhao2024}. Assume that \(d\) is even. In one round,
the walk chooses a uniform perfect matching
\[
\mathcal M
=
\bigl\{
(i_1,j_1),\ldots,(i_{d/2},j_{d/2})
\bigr\}
\]
of \([d]\), samples independent angles $\theta_1,\ldots,\theta_{d/2}$ uniformly from \([0,2\pi)\), and simultaneously applies the rotations
\[
R_{\theta_\ell}
=
\begin{pmatrix}
	\cos\theta_\ell & -\sin\theta_\ell\\
	\sin\theta_\ell & \cos\theta_\ell
\end{pmatrix}
\]
to the coordinate pairs \((i_\ell,j_\ell)\).

We now prove \Cref{thm:parallel-ora-coherence}. Recall that one parallel ORA round chooses a uniform permutation \(\sigma\in S_d\), organizes the
coordinates into the ordered pairs
\[
(\sigma(1),\sigma(2)),\ldots,
(\sigma(d-1),\sigma(d)),
\]
and applies the rotation
\[
R_{\pi/4}
=
\frac1{\sqrt2}
\begin{pmatrix}
	1&-1\\
	1&1
\end{pmatrix}
\]
simultaneously to every pair. We denote the resulting orthogonal
matrix by \(R_\sigma^{\mathrm{par}}\), and write
\[
Q_T^{\mathrm{par}}
:=
R_{\sigma_T}^{\mathrm{par}}
\cdots
R_{\sigma_1}^{\mathrm{par}}
\]
for the product of \(T\) independent rounds.  We show that parallel ORA reaches the same near optimal coherence as
ordinary ORA after \(O(\log d)\) parallel rounds. Thus, for the purpose of obtaining near optimal coherence, the independent random rotations in a parallel Kac round can be replaced by the fixed angle $\pi/4$. We do not address the stronger mixing or pseudorandomness properties considered in \cite{LuQinSongYaoZhao2024,LuQinSongYaoZhao2025}.

The proof follows the same two-step strategy as for ordinary ORA.
Fixed-order moments first provide a preliminary bound on the
largest entries of two independent walks. Next, the coherence amplification
result upgrades this bound to near optimal coherence. Finally, the signed permutation symmetries of parallel ORA identify this auxiliary product with a longer parallel ORA walk.

\begin{lemma}
	\label{lem:parallel-ora-moments}
	Let $d$ be even. For every fixed integer \(p\ge1\), there exists a constant \(C_p>0\) such that for any $x \in \mathbb{S}^{d-1}$ we have
	\[
	\E
	\sum_{i=1}^d
	\left|
	(Q_T^{\mathrm{par}}x)_i
	\right|^{2p}
	\le
	C_p d^{1-p} \qquad \text{for } T\geq C_p \log d.
	\]
\end{lemma}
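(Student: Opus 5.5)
The plan is to track a single power-sum potential along the walk and show that it contracts geometrically down to the scale \(d^{1-p}\). For \(x\in\mathbb S^{d-1}\) and even \(k\), write \(S_k(x)=\sum_i x_i^k\); we will use \(S_2=1\). The case \(p=1\) is trivial, since \(S_2(Q^{\mathrm{par}}_T x)=1\), so we assume \(p\ge2\).

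\emph{Step 1: one-round identity.} A matched pair \((a,b)\) is mapped to \(((a-b)/\sqrt2,(a+b)/\sqrt2)\), and the odd powers cancel:
\[
\frac{(a-b)^{2p}+(a+b)^{2p}}{2^p}
=2^{1-p}\bigl(a^{2p}+b^{2p}\bigr)+2^{1-p}\sum_{\substack{2\le k\le 2p-2\\ k\ \mathrm{even}}}\binom{2p}{k}a^kb^{2p-k}.
\]
This expression is symmetric in \(a,b\), so the orientation of each ordered pair is irrelevant. Let \(x'\) be the image of \(x\) under one round \(R^{\mathrm{par}}_\sigma\). For a uniform perfect matching, each unordered pair \(\{i,j\}\) is matched with probability \(1/(d-1)\). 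Summing over pairs therefore gives
\[
\E_\sigma S_{2p}(x')\le 2^{1-p}S_{2p}(x)+\frac{2^{1-p}}{d-1}\sum_{k}\binom{2p}{k}S_k(x)S_{2p-k}(x),
\]
where we have dropped the diagonal terms \(i=j\), which are nonnegative.

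\emph{Step 2: close the recursion by interpolation.} By log-convexity of \(k\mapsto S_k\) (Hölder), together with \(S_2=1\), we have \(S_k\le S_{2p}^{(k-2)/(2p-2)}\) for even \(2\le k\le 2p\). Hence, for each even \(k\) with \(2\le k\le 2p-2\),
\[
S_kS_{2p-k}\le S_{2p}^{\theta},\qquad \theta=\frac{p-2}{p-1}\in[0,1).
\]
Set \(Y_t=\E\, S_{2p}(Q^{\mathrm{par}}_t x)\). Conditioning on the first \(t\) rounds, applying Step 1, and using Jensen's inequality for the concave map \(s\mapsto s^\theta\), we obtain
\[
Y_{t+1}\le 2^{1-p}Y_t+\frac{C_p}{d}Y_t^{\theta},\qquad Y_0=S_{2p}(x)\le1.
\]

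\emph{Step 3: analyze the scalar recursion.} This step is the only place where care is needed, and it is elementary. Choose \(K_p\) large enough that \(C_pK_p^{\theta-1}\le 2^{-p}\).

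If \(Y_t\ge K_pd^{1-p}\), then, using \((1-\theta)(p-1)=1\),
\[
\frac{C_p}{d}Y_t^\theta=\frac{C_p}{d}\,Y_t\,Y_t^{\theta-1}\le \frac{C_p}{d}\,Y_t\,K_p^{\theta-1}d^{(1-p)(\theta-1)}=C_pK_p^{\theta-1}Y_t\le 2^{-p}Y_t.
\]
So in this regime \(Y_{t+1}\le \tfrac34 Y_t\), because \(2^{1-p}+2^{-p}\le\tfrac34\) for \(p\ge2\).

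If instead \(Y_t<K_pd^{1-p}\), then the recursion directly gives \(Y_{t+1}\le C'_pd^{1-p}\). Hence once the sequence drops below \(K_pd^{1-p}\) it stays bounded by \(\max(K_p,C'_p)\,d^{1-p}\) forever.

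Starting from \(Y_0\le1\), after at most \(\log_{4/3}(d^{p-1}/K_p)=O_p(\log d)\) rounds we reach \(Y_t\le C''_pd^{1-p}\). This holds uniformly in \(x\in\mathbb S^{d-1}\), which proves the lemma with \(T\ge C_p\log d\).

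The only potential subtlety is Step 2. Because the cross terms involve products of random lower moments, we bound them pointwise by a concave power of \(S_{2p}\) alone. This avoids any induction on \(p\) or control of the joint moments \(\E[S_kS_{2p-k}]\), and Jensen's inequality then passes the bound through the expectation.
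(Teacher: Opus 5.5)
Your proposal is correct and follows essentially the same route as the paper: the same power-sum potential, the same one-round identity with pairing probability $1/(d-1)$, the same H\"older interpolation producing the exponent $(p-2)/(p-1)$, and the same threshold $K_p d^{1-p}$. The only difference is minor: you use Jensen to pass the concave term through the expectation and then analyze the nonlinear scalar recursion for $Y_t$, whereas the paper first turns the one-round bound, pointwise in $x$, into the affine bound $\E_R S_p(Rx)\le\rho_p S_p(x)+C_p d^{1-p}$ and then iterates it using linearity of expectation.
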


The proof of \Cref{lem:parallel-ora-moments} is analogous to the moment
argument in \cite[Proposition~4.3]{JainEtAl2022} and is given in
\Cref{app:parallel-ora-moments}.

\begin{proof}[Proof of Theorem \ref{thm:parallel-ora-coherence}]
	Let $p \in \mathbb{N}$ and let $Q_1$ be a parallel ORA walk of length \(T_1\geq C_p\log d\). By
	\Cref{lem:parallel-ora-moments},
	\[
	\E
	\sum_{i=1}^d
	\left|
	(Q_1e_j)_i
	\right|^{2p}
	\le
	C_p d^{1-p}
	\]
	uniformly over \(j\in[d]\). Consequently, Markov's inequality and a union bound over the columns give
	\[
		\Pp\left\{
		\norm{Q_1}_\infty>d^{-3/8}
		\right\}
		\le
		C_p d^{2-p/4}.
	\]
	Taking \(p=28\), for $d\ge C_p$ we obtain
	\begin{equation}\label{eq:parallel-ora-weak-delocalization}
		\Pp\left\{
		\norm{Q_1}_\infty>d^{-3/8}
		\right\}
		\le d^{-4}.
	\end{equation}
	
	Let \(Q_2\) be an independent parallel ORA walk of length
	\(T_2\geq C_p \log d\), let \(D_\xi\) be an independent Rademacher diagonal matrix, and let \(P_\pi\) be an independent uniform permutation matrix. Set
	\[
	U=Q_2D_\xi P_\pi Q_1.
	\]
	Define the event
	\[
	E
	=
	\left\{
	\norm{Q_1}_\infty,\norm{Q_2}_\infty
	\le
	d^{-3/8}
	\right\}.
	\]
	By \eqref{eq:parallel-ora-weak-delocalization} and a union bound, $\Pp(E^c)\le 2d^{-4}$. On the event \(E\),
	\[
	\norm{Q_1}_\infty\norm{Q_2}_\infty
	\le
	d^{-3/4}.
	\]
	For all sufficiently large \(d\), this satisfies the hypothesis of
	\Cref{cor:coherence-amplification} with failure probability
	\(\eta=d^{-2}/2\). Hence, for every realization of $Q_1,Q_2$ in $E$, 
	\[
	\Pp_{\xi,\pi}
	\left\{
	\norm{U}_\infty
	>
	C\sqrt{\frac{\log d}{d}}
	\right\}
	\le
	\frac{d^{-2}}{2},
	\]
	for some absolute constant \(C>0\). Therefore, using $d\geq 2$,
	\begin{equation}\label{eq:parallel-ora-amplified}
		\Pp
		\left\{
		\norm{U}_\infty
		>
		C\sqrt{\frac{\log d}{d}}
		\right\}
		\le
		\frac{d^{-2}}{2}+2d^{-4}
		\le
		d^{-2}.
	\end{equation}
	
	It remains to identify the distribution of \(\norm{U}_\infty\).
	We claim that, for one parallel ORA round \(R\) and every signed
	permutation matrix \(\Sigma\), there exist a signed permutation
	matrix \(\Sigma'\) and a parallel ORA round \(\widetilde R\),
	distributed as \(R\), such that
	\[
	R\Sigma=\Sigma'\widetilde R.
	\]
	Indeed, the permutation part of \(\Sigma\) merely relabels the uniform random ordered pairing. For the diagonal-sign part, on each
	matched pair one has
	\[
	R_{\pi/4}
	\begin{pmatrix}
		\xi_1&0\\
		0&\xi_2
	\end{pmatrix}
	=
	\Sigma_{\xi_1,\xi_2}R_{\pi/4},
	\]
	where
	\[
	\Sigma_{\xi_1,\xi_2}
	=
	R_{\pi/4}
	\begin{pmatrix}
		\xi_1&0\\
		0&\xi_2
	\end{pmatrix}
	R_{-\pi/4}
	\]
	is a signed permutation matrix.
	Iterating this identity through the rounds of \(Q_2\), we obtain
	\[
	Q_2D_\xi P_\pi
	=
	\Sigma'\widetilde Q_2,
	\]
	where \(\widetilde Q_2\) has the law of a
	parallel ORA walk of length $T_2$ and is independent of \(Q_1\). Indeed, conditionally on the preceding rounds, each transformed ordered
	pairing remains uniform. Since left multiplication by a signed permutation
	does not change the maximum absolute entry,
	\[
	\norm{U}_\infty
	=
	\norm{\widetilde Q_2Q_1}_\infty
	\stackrel{\mathrm d}=
	\norm{Q^{\mathrm{par}}_{T}}_\infty,
	\]
	where $Q^{\mathrm{par}}_{T}$ is a parallel ORA walk of length $T=T_1+T_2$. The conclusion follows from
	\eqref{eq:parallel-ora-amplified}, after adjusting the universal constant \(C\).
	\end{proof}

\section{A limitation of the local parameter framework}
\label{sec:limitations}

Theorem \ref{thm:finite-JL} is formulated entirely in terms of the sampled
column energy \(\kappa_I(A)\) and the maximal variance
\(\Theta_B(X)\). The purpose of this section is to show that these two
parameters alone, even when they take their optimal values, do not
guarantee a sub-Gaussian tail when \(m\) is of order \(\sqrt d\).

\begin{theorem}
	\label{thm:sqrt-obstruction}
	For $r$ large enough, set $d=2^{2r}$, $m=\sqrt{d}$, and $I=[m]$. There are orthogonal matrices \(A,B\in O(d)\), and a vector \(x\in\mathbb S^{d-1}\) such that
	\[
	\kappa_I(A)=1,
	\qquad
	\Theta_B(\{x\})=1,
	\]
	but for $\Phi\colon \mathbb{R}^d \longrightarrow \mathbb{R}^m$ defined by (\ref{eq:Phi-main}) one has
	\begin{equation}\label{eq:sqrt-obstruction}
		\Pp
		\left\{
		\left|
		\norm{\Phi x}_2^2-1
		\right|
		\ge3
		\right\}
		\ge
		\exp\left[
		-Cd^{1/3}(\log d)^{2/3}
		\right],
	\end{equation}
	where $C>0$ is a universal constant. 
\end{theorem}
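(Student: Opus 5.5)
The plan is to take both factors to be the Walsh--Hadamard matrix and to force three rare but not too rare events in turn: the inner signs \(\xi'\) localize \(BD_{\xi'}x\), the permutation \(\pi\) packs the localized mass into few fibers of \(P_IA\), and the outer signs \(\xi\) align inside those fibers. The three costs are then balanced. Write \(d=m^2\) with \(m=2^r\). Let \(H_m\) be the normalized \(m\times m\) Walsh--Hadamard matrix, and take
\[
A=B=H:=H_m\otimes H_m .
\]
Index coordinates by pairs \((a,c)\in[m]^2\), ordered so that the rows in \(I=[m]\) are the rows \((1,b)\). Since \(H_{ij}^2=1/d\), we get \(\kappa_I(A)=1\) as in \eqref{eq:hadamard-parameters}, and \(\Theta_B(\{x\})=1\) for every unit vector \(x\). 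So the two local parameters are optimal no matter which \(x\) we pick.

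The first computation is a fiber formula for \(P_IA\). Because the rows of \(H_m\) indexed by \(1\) are constant, for every \(v\in\R^d\)
\[
\frac dm\norm{P_IAv}_2^2=\sum_{c\in[m]}\Bigl(\sum_{a\in[m]}v_{(a,c)}\Bigr)^2 .
\]
With \(y=P_\pi BD_{\xi'}x\) this gives \(\norm{\Phi x}_2^2=\sum_c S_c^2\), where \(S_c=\sum_a \xi_{(a,c)}y_{(a,c)}\). Conditionally on \(\xi',\pi\), the \(S_c\) are independent Rademacher sums over disjoint fibers \(F_c=\{(a,c):a\in[m]\}\). In particular they cannot cancel each other, so it suffices to make a few of the \(S_c^2\) large.

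\emph{Localization by \(\xi'\).} Take \(x=k^{-1/2}\mathbf 1_G\), where \(G\) is a subgroup of \(\mathbb F_2^{2r}\) of size \(k=2^s\), a parameter to be optimized. If \(\xi'\) restricted to \(G\) coincides with the restriction of a character, which happens with probability at least \(2^{-k}\), then the character-sum identity shows that \(z=BD_{\xi'}x\) is supported on a coset of \(G^\perp\). That is \(n=d/k\) coordinates, each with \(z_j^2=1/n\).

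\emph{Packing by \(\pi\).} Next we require \(\pi\) to place \(g\cdot c_0\) of these \(n\) heavy coordinates into \(g\) prescribed fibers, \(c_0\) per fiber, with \(gc_0^2\ge 8n\). A direct count of permutations lower bounds this probability by roughly \(\exp(-Cgc_0\log d)\).

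\emph{Alignment by \(\xi\).} Finally we require \(\xi\) to align the signs of the \(c_0\) heavy entries in each chosen fiber, at cost \(2^{-gc_0}\). The remaining entries of such a fiber form an independent symmetric Rademacher sum, so with probability at least \(1/2\) per fiber they do not reduce \(|S_c|\). Up to this factor \(2^{-g}\), we then get \(\sum_c S_c^2\ge g\,c_0^2/n\ge 8\), hence \(|\norm{\Phi x}_2^2-1|\ge3\).

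Multiplying the three conditional bounds gives a probability of at least
\[
\exp\bigl[-C\bigl(k+gc_0\log d\bigr)\bigr],\qquad gc_0\approx \frac{8n}{c_0},\ \ n=\frac dk,\ \ c_0\le m .
\]
One then optimizes over \(k\), \(c_0\) and \(g\). The three-way trade-off between the localization cost \(k\), the packing cost \(\asymp (d/(kc_0))\log d\), and the constraints \(c_0\le m\), \(g\le m\) should produce an exponent of order \(d^{1/3}(\log d)^{2/3}\) (or better), which gives \eqref{eq:sqrt-obstruction}. If the crude balancing produces a smaller exponent, that only strengthens the statement.

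I expect the main obstacle to be the packing step. It needs a clean lower bound for the probability that a uniform permutation sends a prescribed set of heavy coordinates into prescribed fibers with prescribed multiplicities, together with checking that the character-sum localization yields exactly equal-magnitude entries on a coset. I would handle the packing bound by first choosing the images of the \(gc_0\) relevant heavy coordinates one at a time and bounding each conditional probability from below by \((c_0/d)\)-type factors, then comparing the resulting product to \(\exp(-Cgc_0\log d)\).
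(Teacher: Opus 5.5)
Your proposal is essentially the paper's proof: the paper also takes $B=H_d$ and $x$ a normalized subgroup indicator, forces $\xi'\equiv 1$ on $\supp(x)$ so that $BD_{\xi'}x$ is flat on $n=d/k$ coordinates, and chooses $A$ so that $A^{\mathsf T}P_I^{\mathsf T}P_IA$ is the same block-averaging projection that your $H_m\otimes H_m$ fibers produce; it then packs $\asymp\sqrt n$ heavy coordinates into a single block at cost $(2m)^{-O(\sqrt n)}$ and aligns their outer signs. In the optimization you left unfinished, the binding constraint is $g\ge 1$ rather than $c_0\le m$: this forces $g=1$ and $c_0\asymp\sqrt n$, giving exponent $k+\sqrt{d/k}\,\log d$, which is minimized at $k\asymp d^{1/3}(\log d)^{2/3}$ and yields exactly the stated bound; ignoring $g\ge1$ would spuriously suggest an exponent of order $d^{1/4}\sqrt{\log d}$, so the scheme gives nothing ``better''.
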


The theorem does not rule out the possibility that a particular
transform of the form \(\Phi\) may satisfy a sub-Gaussian tail bound
when \(m\asymp\sqrt d\). Rather, it shows that such a conclusion
cannot, in general, be deduced from the parameters
\(\kappa_I(A)\) and \(\Theta_B(X)\) alone. Any stronger result for a particular orthogonal construction must use further structural information.

\begin{proof}
	Let \(d=2^{2r}\) and $m=\sqrt{d}$. First, we construct the matrix $A$. Partition \([d]\) into \(m\) consecutive blocks $\mathcal B_1,\ldots,\mathcal B_m$, each of size $m$. Let \(J_m\) denote the \(m\times m\) all-ones matrix and define
	\[
	P_0
	=
	\operatorname{diag}
	\left(
	\frac1mJ_m,\ldots,\frac1mJ_m
	\right) \in \mathbb{R}^{d\times d}.
	\]
	Since
	\(m^{-1}J_m\) is the orthogonal projection onto the constant
	vectors in \(\R^m\), the matrix \(P_0\) is an orthogonal
	projection of rank \(m\). Choose \(A\in O(d)\) so that its first $m$ rows form an orthonormal basis of \(\operatorname{range}(P_0)\). Then
	\[
	A^{\mathsf T}P_I^{\mathsf T}P_IA=P_0.
	\]
	Every diagonal entry of \(P_0\) equals $1/m$ and therefore
	\[
	\kappa_I(A)
	=
	\frac dm\max_{j\in[d]}(P_0)_{jj}
	=
	\frac{d}{m^2}
	=
	1.
	\]
	For the right factor, we take $B=H_d$ the normalized Walsh--Hadamard matrix. In particular, $H_d^2=I_d$. Since $\|B\|_\infty=d^{-1/2}$, for every
\(X\subseteq \mathbb S^{d-1}\) one has
\[
\Theta_B(X)
=
d\sup_{\substack{x\in X\\i\in[d]}}
\sum_{j=1}^d B_{ij}^2x_j^2
=
1.
\]
Let \(k\) be a power of two satisfying $4\leq k\leq m/4$ and take $s$ so that
\[
s k^2 = d.
\]
Consider the vector $z \in \mathbb{S}^{d-1}$ whose coordinates are given by
\[
z_j
=
\begin{cases}
	k^{-1}
	&
	\text{if } j\leq k^2
	\\[2mm]
	0,
	&
	\text{otherwise}.
\end{cases}
\]
Write $x=Bz$. A direct calculation from the recursive definition of the Walsh--Hadamard matrix gives
\[
x_j
=
\begin{cases}
	s^{-1/2},
	&
	j\equiv1\pmod{k^2},
	\\[2mm]
	0,
	&
	\text{otherwise}.
\end{cases}
\]
Then \(x\in\mathbb S^{d-1}\) and it has exactly \(s\) nonzero
coordinates. Consider the event
\[
E
=
\left\{
\xi'_j=1
\text{ for every }j\in\supp(x)
\right\}.
\]
Since the coordinates of \(\xi'\) are independent, $\Pp(E)=2^{-s}$. On \(E\), one has \(D_{\xi'}x=x\), and therefore
\[
BD_{\xi'}x=Bx=BBz=z.
\]
Set \(S=[2k] \subseteq [d]\) and 
\[
y=P_\pi z.
\]
After applying the permutation, the coordinates of $S$ move to new locations $T$, where $T$ is a uniform subset of $[d]$ with $2k$ elements. Hence,
\[
\Pp_\pi\{T\subseteq \mathcal B_1\}
=
\prod_{j=0}^{2k-1}
\frac{m-j}{m^2-j}.
\]
Assuming \(k\le m/4\), every factor in the product is at least
\(1/(2m)\), and consequently
\begin{equation}\label{eq:distinguished-packet-event}
	\Pp_\pi\{T\subseteq \mathcal B_1\}
	\ge
	(2m)^{-2k}.
\end{equation}
Condition on \(E\) and on a realization of the permutation for which $T\subseteq \mathcal B_1$.
Consider the event
\[
F
=
\left\{
\xi_j=1
\text{ for every }j\in T
\right\}.
\]
Since the coordinates of \(\xi\) are independent, $\Pp(F)=2^{-2k}$. Observe that
\[ \norm{\Phi x}_2^2 = \frac{d}{m} \|P_I A D_\xi y\|_2^2 = \frac{d}{m} y^\mathsf{T} D_\xi P_0 D_\xi y = \sum_{i=1}^m
\left(
\sum_{j\in\mathcal B_i}\xi_j y_j
\right)^2 \geq \left(\sum_{j\in\mathcal B_1}\xi_j y_j\right)^2. \]
Conditioning on the event $F$, the contribution of the first block can be written as
\[ \sum_{j\in\mathcal B_1}\xi_j y_j
=
\sum_{j\in T}\frac1k+W
=
2+W,\]
where the random variable \(W = \sum_{j\in\mathcal B_1\setminus T}\xi_jy_j\) has a distribution symmetric about
zero (conditionally on \(E\), on the permutation, and on the signs indexed by \(T\)). Hence
\[
\Pp_\xi\{W\ge0\}\ge\frac12.
\]
It follows that, with conditional probability at least
\(2^{-2k-1}\),
\[
\sum_{i\in\mathcal B_1}\xi_i y_i
\ge2,
\]
which implies $\norm{\Phi x}_2^2\ge4$, and thus
\[
\left|
\norm{\Phi x}_2^2-1
\right|
\ge3.
\]

Combining the probabilities of \(E\), the permutation event in
\eqref{eq:distinguished-packet-event}, and the preceding outer-sign
events, we obtain
\begin{equation}\label{eq:7.1}
	\Pp
	\left\{
	\left|
	\norm{\Phi x}_2^2-1
	\right|
	\ge3
	\right\}
	\ge
	2^{-s}(2m)^{-2k}2^{-2k-1}
	\ge
	\exp\left[
	-C\left(
	s+k\log m
	\right)
	\right].
\end{equation}
It remains to optimize the choice of \(k\). For $r$ large enough we can take
\[
k
\asymp
\frac{m^{2/3}}{(\log m)^{1/3}}.
\]
Consequently,
\[
s
+
k\log m
\asymp
m^{2/3}(\log m)^{2/3}.
\]
Substituting this into (\ref{eq:7.1}) and using $m^2=d$ yields
\[
\Pp
\left\{
\left|
\norm{\Phi x}_2^2-1
\right|
\ge3
\right\}
\ge
\exp\left[
-Cd^{1/3}(\log d)^{2/3}
\right].\qedhere
\]
\end{proof}

\appendix

\section{A moment generating function bound for permuted sums}
\label{app:scalar-permutation-mgf}

We recall the permanent inequality of Carlen, Lieb, and Loss
\cite{CarlenLiebLoss2006}: for every complex \(d\times d\) matrix
\(Z\),
\begin{equation}\label{eq:cll-permanent}
	|\operatorname{per}(Z)|
	\le
	d!
	\prod_{i=1}^d
	\left(
	\frac{1}{d}\sum_{j=1}^d |Z_{ij}|^2
	\right)^{1/2}.
\end{equation}

\begin{proof}[Proof of \Cref{lem:scalar-comb-bern}]
	For \(\lambda\in\R\), define the matrix $Z \in \mathbb{R}^{d\times d}$ entrywise by
	\[
	Z_{ij}
	=
	\exp(\lambda a_i b_j).
	\]
	By the definition of the permanent \eqref{eq:per},
	\[
	\E_\pi
	\exp\left(
	\lambda\sum_{i=1}^d a_i b_{\pi(i)}
	\right)
	= \frac{1}{d!} \sum_{\pi \in S_d} \exp \left (\sum_{i=1}^d \lambda a_i b_{\pi(i)} \right ) = \frac{1}{d!} \sum_{\pi \in S_d} \prod_{i=1}^d Z_{i,\pi(i)}= 
	\frac{1}{d!}\operatorname{per}(Z).
	\]
	Applying \eqref{eq:cll-permanent}, we obtain
	\[
	\E_\pi
	\exp\left(
	\lambda\sum_{i=1}^d a_i b_{\pi(i)}
	\right)
	\le
	\prod_{i=1}^d
	\left(
	\frac1d
	\sum_{j=1}^d
	\exp(2\lambda a_i b_j)
	\right)^{1/2}.
	\]
	Note that for $|\lambda|\le 1/M$ we have $|2\lambda a_i b_j|\leq 2$. Using the elementary inequality
	\[
	e^u
	\le
	1+u+2u^2,
	\qquad
	\text {for }|u|\le2,
	\]
	and the centering condition \(\sum_j b_j=0\), we get
	\[
	\begin{aligned}
		\frac1d
		\sum_{j=1}^d
		\exp(2\lambda a_i b_j)
		&\le
		1
		+
		\frac{2\lambda a_i}{d}
		\sum_{j=1}^d b_j
		+
		\frac{8\lambda^2a_i^2}{d}
		\sum_{j=1}^d b_j^2
		=
		1
		+
		8\lambda^2a_i^2
		\frac{\norm{b}_2^2}{d}.
	\end{aligned}
	\]
	Therefore, using \(\log(1+u)\le u\),
	\[
		\log
		\E_\pi
		\exp\left(
		\lambda\sum_{i=1}^d a_i b_{\pi(i)}
		\right)
		\le
		\frac12
		\sum_{i=1}^d
		\log\left(
		1+
		8\lambda^2a_i^2
		\frac{\norm{b}_2^2}{d}
		\right)
		\le
		4\lambda^2
		\frac{\norm{a}_2^2\norm{b}_2^2}{d}.\qedhere
	\]
\end{proof}

\section{Coupling ORA and S-ORA}
\label{app:ora-symmetry}

\begin{proof}[Proof of \Cref{lem:ora-sora-coupling}]
	Recall that \(R_{ij,\theta}\) denotes the rotation by angle \(\theta\)
	in the coordinate plane spanned by \(e_i\) and \(e_j\), acting as the
	identity on the remaining coordinates. We write
	\(R_{ij}=R_{ij,\pi/4}\). We first observe that, for every signed permutation
	\(G\in\mathcal H_d\) and every ordered pair \(i\neq j\), there is
	an ordered pair \(i'\neq j'\) such that
	\[
	G^{-1}R_{ij}G=R_{i'j'}.
	\]
	Indeed, conjugation by a signed permutation relabels the coordinate
	plane and may reverse its orientation. In the latter case we use
	\(R_{ij,-\pi/4}=R_{ji,\pi/4}\). For fixed \(G\), the resulting map
	\((i,j)\mapsto(i',j')\) is a bijection of the ordered pairs of
	distinct coordinates.
	
	An S-ORA update on a pair \((i,j)\) differs from the ordinary ORA
	update on the same pair only by a signed permutation of the two
	coordinates. Indeed, if
	\[
	\theta\in
	\left\{
	\frac\pi4,\frac{3\pi}4,\frac{5\pi}4,\frac{7\pi}4
	\right\},
	\]
	then \(R_{ij,\theta}R_{ij,-\pi/4}\) is a rotation by a multiple of
	\(\pi/2\), and hence is a signed permutation matrix.
	We now construct the coupling inductively. Set
	\(Q_0^{\mathrm S}=Q_0^{\mathrm O}=G_0=I_d\), and suppose that after
	\(t-1\) steps
	\[
	Q_{t-1}^{\mathrm S}=G_{t-1}Q_{t-1}^{\mathrm O}
	\]
	for some \(G_{t-1}\in\mathcal H_d\). Let the next S-ORA update act on
	the ordered pair \((i_t,j_t)\). As observed above, it can be written as
	\(H_tR_{i_tj_t}\) for some \(H_t\in\mathcal H_d\).
	
	By the conjugation property, there is an ordered pair
	\((i_t',j_t')\) such that
	\[
	R_{i_tj_t}G_{t-1}
	=
	G_{t-1}R_{i_t'j_t'}.
	\]
	Therefore
	\[
	Q_t^{\mathrm S}
	=
	H_tG_{t-1}R_{i_t'j_t'}Q_{t-1}^{\mathrm O}.
	\]
	Set
	\[
	G_t=H_tG_{t-1},
	\qquad
	Q_t^{\mathrm O}
	=
	R_{i_t'j_t'}Q_{t-1}^{\mathrm O}.
	\]
	For fixed \(G_{t-1}\), the map
	\((i_t,j_t)\mapsto(i_t',j_t')\) is a bijection of the ordered pairs of
	distinct coordinates. Hence \((i_t',j_t')\) is again uniform and
	independent of the past, and thus \(Q_t^{\mathrm O}\) has the law of an ORA walk and
	\[
	Q_t^{\mathrm S}=G_tQ_t^{\mathrm O}.
	\]
	Iterating the construction produces the desired coupling.
	
	The right factorization follows by taking inverses. Indeed,
	\(R_{ij}^{-1}=R_{ji}\), so an ORA walk has the same
	distribution as its inverse. The same is true for S-ORA, since its
	angle distribution is invariant under \(\theta\mapsto-\theta\).
	Thus, taking inverses in the preceding coupling gives, possibly
	under a different coupling,
	\[
	Q_T^{\mathrm S}
	=
	Q_T^{\mathrm O}G_T^{\mathrm R}
	\]
	for some \(G_T^{\mathrm R}\in\mathcal H_d\).
	
	Finally, left multiplication by a signed permutation leaves
	\(\norm{\cdot}_\infty\) and \(\Theta_{(\cdot)}(X)\) unchanged, while
	right multiplication leaves	\(\kappa_I(\cdot)\) unchanged.
\end{proof}

\section{Moments of parallel ORA}
\label{app:parallel-ora-moments}

\begin{proof}[Proof of \Cref{lem:parallel-ora-moments}]
	The case \(p=1\) follows immediately from orthogonality. Fix
	\(p\ge2\), and for \(1\le r\le p\) set
	\[
	S_r(x)
	:=
	\frac1{(2r)!}\sum_{i=1}^d x_i^{2r}.
	\]
	Let \(R\) denote one parallel ORA round and suppose that \(i\) and \(j\)
	are paired. The contribution of these coordinates to $S_p(Rx)$ is
	\[
		\frac1{(2p)!}
		\left (
		\left(\frac{x_i+x_j}{\sqrt2}\right)^{2p}
		+
		\left(\frac{x_i-x_j}{\sqrt2}\right)^{2p}
		\right )
		=
		\frac{2^{-p}}{(2p)!}
		\left(
		(x_i+x_j)^{2p}+(x_i-x_j)^{2p}
		\right ).
	\]
	The binomial theorem gives
	\[ (x_i + x_j)^{2p} = \sum_{r=0}^{2p} {2p \choose r} x_i^r x_j^{2p-r}
	\qquad
	\text{and}
	\qquad
	 (x_i - x_j)^{2p} = \sum_{r=0}^{2p} (-1)^r {2p \choose r} x_i^r x_j^{2p-r}\]
	When adding these expressions odd terms cancel and we get
	\[ (x_i + x_j)^{2p} + (x_i - x_j)^{2p} = 2\sum_{r=0}^p {2p \choose 2r} x_i^{2r} x_j^{2p-2r}.\]
	For a uniform perfect matching, each unordered pair \(\{i,j\}\)
	occurs with probability \(1/(d-1)\). Therefore
	\[
		\E_R S_p(Rx)
		=
		\frac{2^{1-p}}{d-1}
		\sum_{1\le i<j\le d}
		\sum_{a=0}^p
		\frac{x_i^{2a}x_j^{2p-2a}}
		{(2a)!(2p-2a)!}
		=
		\frac{2^{-p}}{d-1}
		\sum_{r=0}^p
		\sum_{i\ne j}
		\frac{x_i^{2r}x_j^{2p-2r}}
		{(2r)!(2p-2r)!}.
	\]
	The terms \(r=0\) and \(r=p\) together contribute exactly
	\(2^{1-p}S_p(x)\). For \(1\le r\le p-1\), all summands are
	nonnegative, so we have
	\[ \sum_{i\neq j} \frac{x_i^{2r}x_j^{2p-2r}}
	{(2r)!(2p-2r)!} \leq \sum_{i,j=1}^d \frac{x_i^{2r}x_j^{2p-2r}}
	{(2r)!(2p-2r)!} = S_r(x)S_{p-r}(x).\]
	Thus, it follows that
	\begin{equation}\label{eq:appendixC1}
	\E_R S_p(Rx)
	\le
	2^{1-p}S_p(x)
	+
	\frac{2^{-p}}{d-1}
	\sum_{r=1}^{p-1}
	S_r(x)S_{p-r}(x).
	\end{equation}
	We next control the lower moments in terms of \(S_p(x)\) via interpolation. Fix $1\leq r \leq p$ and set
	\[ \alpha= \frac{p-r}{p-1}, \qquad \beta=\frac{r-1}{p-1}.\]
	Observe that $\alpha+\beta=1$ and $2\alpha + 2p\beta = 2r$. H\"older's inequality then yields
	\begin{align*}
		\sum_{i=1}^d |x_i|^{2r}
		&= \sum_{i=1}^d \left (|x_i|^2\right )^\alpha \left (|x_i|^{2p}\right )^\beta\le
		\left(
		\sum_{i=1}^d |x_i|^2
		\right)^{\alpha}
		\left(
		\sum_{i=1}^d |x_i|^{2p}
		\right)^{\beta}
		\\
		&=
		\left(
		\sum_{i=1}^d |x_i|^{2p}
		\right)^{(r-1)/(p-1)},
	\end{align*}
	where we have used that $x \in \mathbb{S}^{d-1}$. Hence, for \(1\le r\le p-1\),
	\[
	S_r(x)S_{p-r}(x)
	\le
	C_p
	S_p(x)^{(p-2)/(p-1)},
	\]
	where \(C_p\) depends only on \(p\). Since \(d\ge2\), from \eqref{eq:appendixC1} and the above estimate, after adjusting \(C_p\), we obtain
	\[
	\E_R S_p(Rx)
	\le
	2^{1-p}S_p(x)
	+
	\frac{C_p}{d}
	S_p(x)^{(p-2)/(p-1)}.
	\]
	Set \(u=S_p(x)\). We distinguish whether \(u\) is above or below
	the target scale \(d^{1-p}\). For \(K_p>0\), define
	\[
	\rho_p
	:=
	2^{1-p}+C_pK_p^{-1/(p-1)}.
	\]
	Choose \(K_p\) sufficiently large so that \(\rho_p<1\). If \(u\ge K_pd^{1-p}\), then
	\[
	\frac1d u^{(p-2)/(p-1)}
	=
	\frac{u}{d\,u^{1/(p-1)}}
	\le
	K_p^{-1/(p-1)}u,
	\]
	and hence
	\[
	\E_R S_p(Rx)
	\le
	\rho_p u.
	\]
	If instead \(u<K_pd^{1-p}\), then
	\[
	\E_R S_p(Rx)
	\le
	2^{1-p}K_pd^{1-p}
	+
	C_pK_p^{(p-2)/(p-1)}d^{1-p}
	\le
	C_p' d^{1-p}.
	\]
	Combining the two cases and adjusting the constant gives
	\[
	\E_R S_p(Rx)
	\le
	\rho_p S_p(x)+C_p d^{1-p}.
	\]
	Now denote
	\[
	x_t=Q_t^{\mathrm{par}}x.
	\]
	The \((t+1)\)-st parallel ORA round is independent of \(x_t\).
	Thus, conditioning on \(x_t\),
	\[
	\E\left[
	S_p(x_{t+1})
	\,\middle|\,
	x_t
	\right]
	\le
	\rho_p S_p(x_t)+C_p d^{1-p}.
	\]
	Taking expectations and iterating gives
	\[
		\E S_p(x_t)
		\le
		\rho_p^tS_p(x)
		+
		C_p d^{1-p}
		\sum_{s=0}^{t-1}\rho_p^s
		\le
		\rho_p^tS_p(x)+C_p d^{1-p},
	\]
	after adjusting \(C_p\). Finally, since \(x\in\mathbb S^{d-1}\),
	\[
	S_p(x)
	=
	\frac1{(2p)!}
	\sum_{i=1}^d |x_i|^{2p}
	\le
	\frac1{(2p)!}.
	\]
	Since \(\rho_p<1\), after increasing \(C_p\), taking
	\(T\ge C_p\log d\) implies
	\[
	\rho_p^TS_p(x)
	\le
	d^{1-p}.
	\]
	Consequently,
	\[
	\E S_p(Q_T^{\mathrm{par}}x)
	\le
	C_p d^{1-p}.
	\]
	Multiplying by \((2p)!\) and absorbing this factor into \(C_p\) proves the result.
\end{proof}

\bibliographystyle{amsplain}
\bibliography{references}

@misc{ChengBarber2026,
  author       = {Cheng, Chen and Barber, Rina Foygel},
  title        = {Concentration Inequalities for Exchangeable Tensors and Matrix-Valued Data},
  howpublished = {arXiv:2601.20152},
  year         = {2026},
  note         = {Preprint}
}

@article{AilonChazelle2009,
  author  = {Ailon, Nir and Chazelle, Bernard},
  title   = {The Fast Johnson--Lindenstrauss Transform and Approximate Nearest Neighbors},
  journal = {SIAM Journal on Computing},
  volume  = {39},
  number  = {1},
  pages   = {302--322},
  year    = {2009},
  doi     = {10.1137/060673096}
}

@article{AilonLiberty2009,
  author  = {Ailon, Nir and Liberty, Edo},
  title   = {Fast Dimension Reduction Using Rademacher Series on Dual {BCH} Codes},
  journal = {Discrete \& Computational Geometry},
  volume  = {42},
  number  = {4},
  pages   = {615--630},
  year    = {2009},
  doi     = {10.1007/s00454-009-9167-x}
}

@incollection{Albert2019,
  author    = {Albert, M{\'e}lisande},
  title     = {Concentration Inequalities for Randomly Permuted Sums},
  booktitle = {High Dimensional Probability VIII},
  series    = {Progress in Probability},
  volume    = {74},
  pages     = {341--383},
  year      = {2019},
  publisher = {Birkh{\"a}user},
}

@article{CarlenLiebLoss2006,
  author  = {Carlen, Eric A. and Lieb, Elliott H. and Loss, Michael},
  title   = {An Inequality of Hadamard Type for Permanents},
  journal = {Methods and Applications of Analysis},
  volume  = {13},
  number  = {1},
  pages   = {1--18},
  year    = {2006}
}

@article{Dirksen2015,
  author  = {Dirksen, Sjoerd},
  title   = {Tail Bounds via Generic Chaining},
  journal = {Electronic Journal of Probability},
  volume  = {20},
  pages   = {1--29},
  year    = {2015},
  doi     = {10.1214/EJP.v20-3888}
}

@article{JainEtAl2022,
  author  = {Jain, Vishesh and Pillai, Natesh S. and Sah, Ashwin and Sawhney, Mehtaab and Smith, Aaron},
  title   = {Fast and Memory-Optimal Dimension Reduction Using Kac's Walk},
  journal = {The Annals of Applied Probability},
  volume  = {32},
  number  = {5},
  pages   = {4038--4064},
  year    = {2022},
  doi     = {10.1214/22-AAP1784}
}

@misc{JainMizgerd2026,
  author       = {Jain, Vishesh and Mizgerd, Clayton},
  title        = {Total Variation Cutoff for Kac's Walk on the Sphere},
  howpublished = {arXiv:2607.13401},
  year         = {2026},
  note         = {Preprint}
}

@incollection{JohnsonLindenstrauss1984,
  author    = {Johnson, William B. and Lindenstrauss, Joram},
  title     = {Extensions of Lipschitz Mappings into a Hilbert Space},
  booktitle = {Conference in Modern Analysis and Probability},
  series    = {Contemporary Mathematics},
  volume    = {26},
  pages     = {189--206},
  publisher = {American Mathematical Society},
  year      = {1984}
}

@article{KrahmerWard2011,
  author  = {Krahmer, Felix and Ward, Rachel},
  title   = {New and Improved Johnson--Lindenstrauss Embeddings via the Restricted Isometry Property},
  journal = {SIAM Journal on Mathematical Analysis},
  volume  = {43},
  number  = {3},
  pages   = {1269--1281},
  year    = {2011},
  doi     = {10.1137/100810447}
}

@article{MackeyEtAl2014,
  author  = {Mackey, Lester and Jordan, Michael I. and Chen, Richard Y. and Farrell, Brendan and Tropp, Joel A.},
  title   = {Matrix Concentration Inequalities via the Method of Exchangeable Pairs},
  journal = {The Annals of Probability},
  volume  = {42},
  number  = {3},
  pages   = {906--945},
  year    = {2014},
  doi     = {10.1214/13-AOP892}
}

@article{Barber2024,
  author  = {Barber, Rina Foygel},
  title   = {Hoeffding and Bernstein Inequalities for Weighted Sums of
             Exchangeable Random Variables},
  journal = {Electronic Communications in Probability},
  volume  = {29},
  pages   = {1--13},
  year    = {2024},
  doi     = {10.1214/24-ECP620}
}

@book{Vershynin2026,
  author    = {Vershynin, Roman},
  title     = {High-Dimensional Probability:
               An Introduction with Applications in Data Science},
  edition   = {Second},
  publisher = {Cambridge University Press},
  year      = {2026}
}

@article{BrudnyiYomdin2015,
  author  = {Brudnyi, Alexander and Yomdin, Yosef},
  title   = {Norming Sets and Related {R}emez-Type Inequalities},
  journal = {Journal of the Australian Mathematical Society},
  volume  = {100},
  number  = {2},
  pages   = {163--181},
  year    = {2016},
  doi     = {10.1017/S1446788715000488}
}

@misc{ZhangKileel2025,
  author       = {Zhang, Yifan and Kileel, Joe},
  title        = {Norming Sets for Tensor and Polynomial Sketching},
  howpublished = {arXiv:2506.05174},
  year         = {2025},
  note         = {Preprint}
}

@article{BaraniukEtAl2008,
  author  = {Baraniuk, Richard G. and Davenport, Mark A. and
             DeVore, Ronald A. and Wakin, Michael B.},
  title   = {A Simple Proof of the Restricted Isometry Property for
             Random Matrices},
  journal = {Constructive Approximation},
  volume  = {28},
  number  = {3},
  pages   = {253--263},
  year    = {2008},
  doi     = {10.1007/s00365-007-9003-x}
}

@article{RechtFazelParrilo2010,
  author  = {Recht, Benjamin and Fazel, Maryam and Parrilo, Pablo A.},
  title   = {Guaranteed Minimum-Rank Solutions of Linear Matrix
             Equations via Nuclear Norm Minimization},
  journal = {SIAM Review},
  volume  = {52},
  number  = {3},
  pages   = {471--501},
  year    = {2010},
  doi     = {10.1137/070697835}
}

@inproceedings{LuQinSongYaoZhao2024,
  author    = {Lu, Chuhan and Qin, Minglong and Song, Fang and
               Yao, Penghui and Zhao, Mingnan},
  title     = {Quantum Pseudorandom Scramblers},
  booktitle = {Theory of Cryptography: 22nd International Conference,
               {TCC} 2024, Milan, Italy, December 2--6, 2024,
               Proceedings, Part {II}},
  editor    = {Boyle, Elette and Mahmoody, Mohammad},
  series    = {Lecture Notes in Computer Science},
  volume    = {15365},
  pages     = {3--35},
  publisher = {Springer Nature Switzerland},
  address   = {Cham},
  year      = {2024},
  doi       = {10.1007/978-3-031-78017-2_1}
}

@misc{LuQinSongYaoZhao2025,
  author       = {Lu, Chuhan and Qin, Minglong and Song, Fang and
                  Yao, Penghui and Zhao, Mingnan},
  title        = {Parallel {Kac}'s Walk Generates {PRU}},
  howpublished = {arXiv:2504.14957},
  year         = {2025},
  note         = {Preprint}
}

@book{Talagrand2021,
  author    = {Talagrand, Michel},
  title     = {Upper and Lower Bounds for Stochastic Processes:
               Decomposition Theorems},
  edition   = {2},
  series    = {Ergebnisse der Mathematik und ihrer Grenzgebiete.
               3. Folge / A Series of Modern Surveys in Mathematics},
  volume    = {60},
  publisher = {Springer},
  address   = {Cham},
  year      = {2021},
  doi       = {10.1007/978-3-030-82595-9}
}

@article{PillaiSmith2017,
  author  = {Pillai, Natesh S. and Smith, Aaron},
  title   = {Kac's Walk on {$n$}-Sphere Mixes in {$n\log n$} Steps},
  journal = {The Annals of Applied Probability},
  volume  = {27},
  number  = {1},
  pages   = {631--650},
  year    = {2017},
  doi     = {10.1214/16-AAP1214}
}

\end{document}